\documentclass[10pt, oneside]{article}
\usepackage[english]{babel}
\usepackage{amsmath}
\usepackage{amsthm}
\usepackage{amssymb}
\usepackage{graphicx}
\usepackage{pst-all}
\usepackage{authblk}
\usepackage{enumerate}
\usepackage{color,graphicx}
\usepackage{graphics}
\usepackage{xspace,colortbl}
\usepackage[latin1]{inputenc}
\usepackage{float}

\usepackage{fancyvrb}
\DefineVerbatimEnvironment
{MiVerbatim}{Verbatim}{fontsize=\footnotesize,frame=single,label=\emph{Mathematica
8.0},framesep=2mm,numbers=left}

\title{An Introduction to Inversion in an Ellipse}
\author[1]{José L. Ramírez \thanks{josel.ramirez@ima.usergioarboleda.edu.co}}
\affil[1]{Departamento de Matemáticas,  Universidad Sergio Arboleda, Bogotá, Colombia}

\hoffset+1.46cm
\voffset+0.46cm
\setlength{\textheight}{23cm}
\setlength{\textwidth}{14.7cm}
\setlength{\topmargin}{0cm}
\setlength{\oddsidemargin}{0cm}
\setlength{\evensidemargin}{-1cm}
\setlength{\headheight}{0cm}
\setlength{\headsep}{0cm}
\setlength{\marginparwidth}{0cm}
\setlength{\marginparsep}{0cm}

\begin{document}
\newtheorem{theorem}{Theorem}
\newtheorem{definition}{Definition}
\newtheorem{corollary}{Corollary}
\newtheorem{example}{Example}
\newtheorem{lemma}{Lemma}
\newtheorem{proposition}{Proposition}
\maketitle
\setlength{\parindent}{0pt}

\begin{abstract}
In this paper we study the inversion in an ellipse and some properties, which generalizes the classical inversion with respect to a circle. We also study  the inversion in an ellipse of lines, ellipses and other curves. Finally, we generalize the Pappus Chain with respect to ellipses and the Pappus Chain Theorem.  \\
\textbf{Keywords:} Inversion, elliptic inversion, elliptic inversion of curves, Elliptic Pappus Chain.

\end{abstract}
\date
\maketitle

\section{Introduction}
In this paper we study the elliptic inversion, which was introduced in  \cite{CHI}, and some related properties to the distance of elliptic inverse points,  cross ratio, harmonic conjugates and the elliptic inversion of different curves. Elliptic inversion generalizes the classical inversion, which has a lot of properties and applications, see \cite{BLA, OGI, PED}.\\

The outline of this paper is as follow. In Section 2 we define the inversion respect to an ellipse.  In Section 3 we study some basic properties of the inversion  in an ellipse and its relations with the cross ratio and the   harmonic conjugates. We also study the cartesian coordinates of elliptic points. In Section 4 we describe the inversion  in an ellipse of lines and conics. Finally, in Section 5 we introduce the Elliptic Pappus Chain and we apply the inversion in an ellipse  to proof  the  generalize Pappus  Chain Theorem. 
\section{Elliptic Inversion}

\begin{definition}\label{dinve}
Let $E$ be an ellipse centered at a point  $O$ with focus $F_1$  and $F_2$ in $\mathbb{R}^2$. The inversion in the ellipse  $E$ or Elliptic Inversion respect to $E$  is the mapping
$\psi:\mathbb{R}^2\setminus \{O\} \longmapsto
\mathbb{R}^2\setminus \{O\}$ defined by $\psi(P)=P'$, where $P'$ lies on the ray   $\stackrel{\longrightarrow}{OP}$ and $OP\cdot OP'=(OQ)^2$, where $Q$ is the point of  intersection of the ray $\stackrel{\longrightarrow}{OP}$ and the ellipse $E$.
\end{definition}
The point $P'$ is said to be the \emph{elliptic inverse} of $P$ in the ellipse $E$, or with respect to the ellipse $E$,  $E$  is called the \emph{ellipse of inversion}, $O$ is called  the \emph{center of  inversion}, and the number $OQ=w$ is called the \emph{radius of inversion},  see Figure \ref{fig:elipsepartes}. The inversion with respect to the ellipse $E$, center of inversion $O$ and radius of  inversion $w>0$ is denoted by $\mathcal{E}(O,w)$.  Unlike the classical case, here the radius is not constant.

\begin{figure}[h]
    \begin{center}
\psset{xunit=1.0cm,yunit=1.0cm,algebraic=true,dotstyle=o,dotsize=3pt 0,linewidth=0.8pt,arrowsize=3pt 2,arrowinset=0.25}
\begin{pspicture*}(-2.8,-1.6)(4.2,2.16)
\rput{0}(0,0){\psellipse(0,0)(2.5,1.5)}
\rput[tl](-2.6,2.1){$E$}
\psline(0,0)(3.72,1.6)
\psline[linestyle=dashed,dash=2pt 2pt](-2.5,0)(2.5,0)
\begin{scriptsize}
\psdots[dotstyle=*,linecolor=blue](2,0)
\rput[bl](2,0.12){\blue{$F_2$}}
\psdots[dotstyle=*,linecolor=blue](-2,0)
\rput[bl](-2,0.12){\blue{$F_1$}}
\psdots[dotstyle=*,linecolor=blue](0,0)
\rput[bl](-0.3,0.12){\blue{$O$}}
\psdots[dotstyle=*,linecolor=blue](3.72,1.6)
\rput[bl](3.8,1.72){\blue{$P$}}
\psdots[dotstyle=*,linecolor=blue](1.11,0.48)
\rput[bl](0.7,0.58){\blue{$P'$}}
\psdots[dotstyle=*,linecolor=blue](2.03,0.87)
\rput[bl](1.9,1){\blue{$Q$}}
\end{scriptsize}
\end{pspicture*}
    \end{center}
    \caption{Inversion in an Ellipse.}
    \label{fig:elipsepartes}
\end{figure}
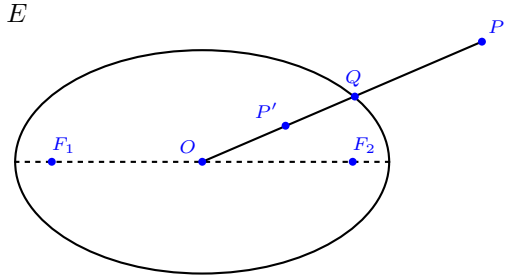

The elliptic inversion is an involutive mapping, i.e., $\psi\left(\psi\left(P\right)\right)=P$. The fixed points are the points on the ellipse $E$. Indeed,  if $F$ is a fixed point, $\psi(F)=F$, then $OF\cdot OF=(OF)^2=(OQ)^2$, then  $OF=OQ$ and as Q lies on the ray $\stackrel{\longrightarrow}{OF}$, then $F=Q$.

\begin{proposition}\label{intexteriorelipse}
If $P$ is in the exterior of $E$ then $P'$ is interior to $E$, and conversely.
\end{proposition}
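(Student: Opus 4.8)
The plan is to reduce the statement to a one-line inequality coming from the defining relation $OP\cdot OP'=(OQ)^2$, after first pinning down what ``interior'' and ``exterior'' mean radially. The key preliminary observation is that, since the region bounded by $E$ is convex and contains its center $O$, every ray emanating from $O$ meets $E$ in exactly one point $Q$, and a point $R\ne O$ on that ray is interior to $E$ if and only if $OR<OQ$, lies on $E$ if and only if $OR=OQ$, and is exterior if and only if $OR>OQ$. Concretely, choosing coordinates with $O$ at the origin and the axes of $E$ along the coordinate axes, $E$ has equation $x^2/a^2+y^2/b^2=1$, and a point $R=t\,u$ (with $u$ a unit vector and $t=OR>0$) satisfies $x^2/a^2+y^2/b^2=t^2 f(u)$ for the positive constant $f(u)=u_1^2/a^2+u_2^2/b^2$; hence the left-hand quadratic form is strictly increasing in $t$, which yields the claimed trichotomy with $OQ=1/\sqrt{f(u)}$.

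With this in hand, the proof is immediate. By Definition~\ref{dinve} the points $P$ and $P'$ lie on the same ray $\stackrel{\longrightarrow}{OP}$, so the same intersection point $Q$ and the same radius of inversion $w=OQ$ are attached to both. Suppose $P$ is exterior to $E$; then $OP>OQ=w$, and from $OP\cdot OP'=w^2$ we get $OP'=w^2/OP<w^2/w=w=OQ$, so $P'$ is interior to $E$. For the converse one can either run the same computation in reverse (if $0<OP<OQ$ then $OP'=w^2/OP>w=OQ$, so $P'$ is exterior), or invoke the involutivity of $\psi$ noted in the text: if $P$ is interior then $P'$ cannot be interior, since otherwise $P=\psi(P')$ would be interior as well, a contradiction; and $P'$ cannot lie on $E$, since then $P'$ would be a fixed point and $P=\psi(P')=P'$ would lie on $E$, again a contradiction; hence $P'$ is exterior.

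I expect the only real content to be the radial characterization of interior and exterior points; once that is granted, everything reduces to the elementary equivalence $w^2/t<w\iff t>w$. A careful write-up should therefore concentrate on justifying that the interior of $E$ is star-shaped (indeed convex) with respect to $O$ and that each ray from $O$ crosses $E$ exactly once — or simply carry out the explicit coordinate computation above, which makes both facts transparent and is the quickest route.
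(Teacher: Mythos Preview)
Your proof is correct and follows essentially the same approach as the paper: both reduce to the inequality $OP'=w^2/OP<w$ when $OP>w$. Your version is more careful in that you justify the radial characterization of interior and exterior points and explicitly handle the converse, whereas the paper simply assumes ``$P$ exterior $\Leftrightarrow OP>w$'' and treats only one direction.
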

\begin{proof}
Let $P$ be an exterior point of  $\mathcal{E}(O,w)$, then $w<OP$. If $P'$ is the elliptic inverse of $P$, then $OP\cdot OP'=w^2$. Hence $w^2=OP\cdot OP'>w \cdot OP'$ and  $OP'<w$.
\end{proof}

Inversion in an ellipse inversion does not hold for the center of inversion $O$, as in the usual definition. However, we can add to the Euclidean plane a single point at infinite $O_{\infty}$, which is the inverse of the center of any elliptic inversion. This plane is denoted by $\mathbb{R}^2_{\infty}$.  We now have a one-to-one map of our extended plane.
\begin{definition}\label{dinve2}
Let $E$ be an ellipse centered at a point  $O$ in $\mathbb{R}^2_{\infty}$, the elliptic inversion in this ellipse is the mapping
$\psi:\mathbb{R}^2_{\infty}\longmapsto \mathbb{R}^2_{\infty}$ defined by $\psi(P)=P'$, where $P'$ lies on the ray   $\stackrel{\longrightarrow}{OP}$ and $(OP)(OP')=(OQ)^2$, where $Q$ is the point of  intersection of the ray $\stackrel{\longrightarrow}{OP}$ and the ellipse $E$, $\psi(O_\infty)=O$ and $\psi(O)=O_\infty$.
\end{definition}

\section{Basic Properties}

\begin{theorem}\label{diseliptica}
Let $P$ and $T$ be different points. Let $P'$ and $T'$ their respective elliptic inverse points respect to $\mathcal{E}(O,w)$ and $\mathcal{E}(O,u)$. Then

\begin{enumerate}[i.]
    \item If $P$, $T$ and $O$ are not collinear, then  \[ P'T'=\frac{\sqrt{\left(w^2-u^2\right)\left(w^2(OT)^2-u^2(OP)^2\right)+w^2u^2(PT)^2}}{OP\cdot OT}. \]
    \item If $P$, $T$ and $O$ are collinear, then
    \[P'T'=\frac{w^2PT}{OP\cdot OT}.\]
\end{enumerate}
\end{theorem}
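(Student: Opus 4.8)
The plan is to reduce the whole statement to the law of cosines applied to the two triangles $OPT$ and $OP'T'$, using nothing about the ellipse beyond the defining relations. By Definition~\ref{dinve}, $P'$ lies on the ray $\stackrel{\longrightarrow}{OP}$ with $OP'=w^2/OP$, and likewise $T'$ lies on $\stackrel{\longrightarrow}{OT}$ with $OT'=u^2/OT$. Since $P'$ is on the ray $\stackrel{\longrightarrow}{OP}$ and $T'$ is on the ray $\stackrel{\longrightarrow}{OT}$, the angle $\angle P'OT'$ equals $\theta:=\angle POT$. These two observations are all the geometric input part i needs.

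For part i, with $P,T,O$ not collinear we have $\theta\in(0,\pi)$ and both triangles are nondegenerate. The law of cosines in $OP'T'$ gives
\[ (P'T')^2 = (OP')^2+(OT')^2-2\,OP'\cdot OT'\cos\theta = \frac{w^4}{(OP)^2}+\frac{u^4}{(OT)^2}-\frac{2w^2u^2}{OP\cdot OT}\cos\theta, \]
while the law of cosines in $OPT$ gives $\cos\theta=\dfrac{(OP)^2+(OT)^2-(PT)^2}{2\,OP\cdot OT}$. Substituting the second identity into the first and multiplying through by $(OP)^2(OT)^2$ turns the right-hand side into $w^4(OT)^2+u^4(OP)^2-w^2u^2\big((OP)^2+(OT)^2-(PT)^2\big)$, and collecting the terms carrying the factor $w^2-u^2$ rewrites this as $(w^2-u^2)\big(w^2(OT)^2-u^2(OP)^2\big)+w^2u^2(PT)^2$. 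Taking square roots (the radicand is automatically nonnegative, being a square times $(OP)^2(OT)^2$) gives the claimed formula.

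For part ii, suppose $P,T,O$ are collinear. Then $\stackrel{\longrightarrow}{OP}$ and $\stackrel{\longrightarrow}{OT}$ are either the same ray or opposite rays; since $E$ is symmetric about its center $O$, the chord from $O$ to $E$ along that line has a single length, so $w=u$, and hence $OP'=w^2/OP$, $OT'=w^2/OT$. If $P$ and $T$ lie on the same ray, so do $P'$ and $T'$, and $P'T'=|OP'-OT'|=w^2\,|OT-OP|/(OP\cdot OT)=w^2\,PT/(OP\cdot OT)$; if they lie on opposite rays, then $P'T'=OP'+OT'=w^2(OP+OT)/(OP\cdot OT)=w^2\,PT/(OP\cdot OT)$. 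Either way we get the stated expression.

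The only genuine work is the algebraic rearrangement in part i; the step most prone to error is recognizing the factorization of the numerator after eliminating $\cos\theta$, so I would carry that simplification out carefully, but it is otherwise routine.
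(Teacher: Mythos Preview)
Your proof is correct and follows essentially the same approach as the paper: both apply the law of cosines to triangles $OPT$ and $OP'T'$, substitute $OP'=w^2/OP$ and $OT'=u^2/OT$, eliminate $\cos\theta$, and factor the resulting numerator. Your treatment of part~ii is in fact more careful than the paper's, since you explicitly justify $w=u$ via the central symmetry of $E$ and separately handle the same-ray and opposite-ray cases, whereas the paper simply asserts $w=u$ and reads off the result.
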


\begin{proof}
 \emph{i.} If $P, T$ and $O$ are not collinear. Then  $P',  T'$ and $O$ are not also collinear, see Figure \ref{fig:disinveliptica}.

\begin{figure}[h]
    \begin{center}
       \newrgbcolor{qqwuqq}{0 0.39 0}
\psset{xunit=0.85cm,yunit=0.85cm,algebraic=true,dotstyle=o,dotsize=3pt 0,linewidth=0.8pt,arrowsize=3pt 2,arrowinset=0.25}
\begin{pspicture*}(-3.86,-1.6)(2.86,2.46)
\rput{0}(0,0){\psellipse(0,0)(2.5,1.5)}
\rput[tl](2.24,2.12){$E$}
\psline[linestyle=dashed,dash=3pt 3pt](-2.5,0)(2.5,0)
\psline[linecolor=qqwuqq](0,0)(1.46,2.02)
\psline[linecolor=qqwuqq](0,0)(-3.73,2.03)
\psline[linecolor=qqwuqq](-3.73,2.03)(0.68,0.94)
\psline[linecolor=qqwuqq](-0.92,0.5)(1.46,2.02)
\rput[tl](-0.16,0.4){$\alpha$}
\begin{scriptsize}
\psdots[dotstyle=*,linecolor=blue](2,0)
\rput[bl](1.92,-0.36){\blue{$F_2$}}
\psdots[dotstyle=*,linecolor=blue](-2,0)
\rput[bl](-2.08,-0.34){\blue{$F_1$}}
\psdots[dotstyle=*,linecolor=blue](0,0)
\rput[bl](0.04,-0.24){\blue{$O$}}
\psdots[dotstyle=*,linecolor=blue](-1.85,1)
\rput[bl](-1.96,0.65){\blue{$Q$}}
\psdots[dotstyle=*,linecolor=blue](1,1.35)
\rput[bl](0.95,1.05){\blue{$S$}}
\psdots[dotstyle=*,linecolor=blue](-0.92,0.5)
\rput[bl](-0.98,0.16){\blue{$P$}}
\psdots[dotstyle=*,linecolor=red](-3.73,2.03)
\rput[bl](-3.66,2.14){\red{$P'$}}
\psdots[dotstyle=*,linecolor=blue](1.46,2.02)
\rput[bl](1.54,2.14){\blue{$T$}}
\psdots[dotstyle=*,linecolor=red](0.68,0.94)
\rput[bl](0.8,0.66){\red{$T'$}}
\end{scriptsize}
\end{pspicture*}
    \end{center}
    \caption{Distance and Inverse Points.}
    \label{fig:disinveliptica}
\end{figure}
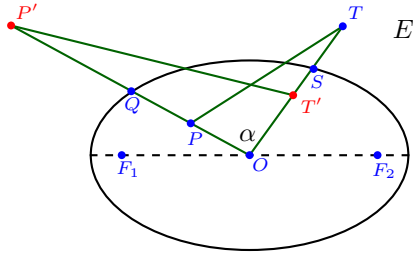

Let $\alpha$ be the measure of the  angle  $\angle P'OT'$, then  by law of cosines
\begin{align}\label{ecuaa15}
(P'T')^2=(OP')^2+(OT')^2-2 \cdot OP' \cdot OT' \cdot\cos \alpha
\end{align}

From $OP\cdot OP'=(OQ)^2=w^2$ and $OT\cdot OT'=(OS)^2=u^2$, we have  $OP'=\frac{w^2}{OP}$ and  $OT'=\frac{u^2}{OT}$, where $Q$ and $S$ are respectively the points of intersection of  rays $\stackrel{\longrightarrow}{OP}$ and  $\stackrel{\longrightarrow}{OT}$ with $E$, see Figure \ref{fig:disinveliptica}.

Replacing these values in (\ref{ecuaa15}):
\begin{align}\label{ecuaa16}
(P'T')^2=\frac{w^4}{(OP)^2}+\frac{u^4}{(OT)^2}-2\frac{w^2u^2}{OP\cdot OT}\cos \alpha
\end{align}

As $\alpha$ is also the measure of the  angle  $\angle POT$, then  by law of cosines
\begin{align*}
(PT)^2&=(OP)^2+(OT)^2-2\cdot OP\cdot OT\cdot \cos \alpha\\
2\cos \alpha&=\frac{(OP)^2+(OT)^2-(PT)^2}{OP\cdot OT}
\end{align*}
Replacing  in (\ref{ecuaa16}):
\begin{align*}
(P'T')^2&=\frac{w^4}{(OP)^2}+\frac{u^4}{(OT)^2}-\frac{w^2u^2}{OP\cdot OT}\left(\frac{(OP)^2+(OT)^2-(PT)^2}{OP\cdot OT}\right)\\
&=\frac{w^2(OT)^2\left(w^2-u^2\right)-u^2(OP)^2\left(w^2-u^2\right)+w^2u^2(PT)^2}{(OP)^2(OT)^2}\\
&=\frac{\left(w^2-u^2\right)\left(w^2(OT)^2-u^2(OP)^2\right)+w^2u^2(PT)^2}{(OP)^2(OT)^2}
\end{align*}
Hence
    \[P'T'=\frac{\sqrt{\left(w^2-u^2\right)\left(w^2(OT)^2-u^2(OP)^2\right)+w^2u^2(PT)^2}}{OP\cdot OT}
\]

\emph{ii.} When $P, Q$ are $O$ collinear, then  $OQ=w=u=OS$. Therefore
    \[P'T'=\frac{w^2\cdot PT}{OP\cdot OT} \qedhere \]
\end{proof}

Note that if $E$ is a circumference, then $OQ=w=u=OS$. Hence
\begin{align*}
P'T'&=\frac{\sqrt{\left(w^2-w^2\right)\left(w^2(OT)^2-w^2(OP)^2\right)+w^2w^2(PT)^2}}{(OP)(OT)}\\
&=\frac{\sqrt{w^4(PT)^2}}{OP\cdot OT}\\
&=\frac{w^2\cdot PT}{OP\cdot OT}
\end{align*}
where $w$ is the radius of the circumference.

\subsection{Inversion in an Ellipse and Cross Ratio}

Suppose that $A, B, C$ and $D$ are four distinct points on a line $l$; we define their \emph{cross ratio} $\left\{AB, CD\right\}$ by
\begin{align*}
\left\{AB, CD\right\}=\frac{\overrightarrow{AC}\cdot\overrightarrow{BD}}{\overrightarrow{AD}\cdot\overrightarrow{BC}}
\end{align*}
where $\overrightarrow{AB}$ denote the signed distance from $A$ to $B$. The cross ratio is an invariant under inversion in a circle whose center is not any of the four points $A, B, C$ or $D$, see \cite{BLA}. However, the inversion in an ellipse does not preserve the cross ratio, for example see Figure \ref{fig:cocientedobleinveliptica}.

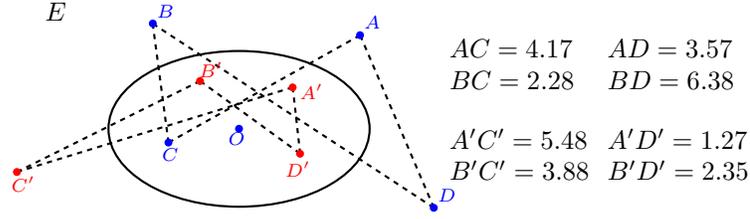
\begin{figure}[h]
    \begin{center}
\psset{xunit=0.7cm,yunit=0.7cm,algebraic=true,dotstyle=o,dotsize=3pt 0,linewidth=0.8pt,arrowsize=3pt 2,arrowinset=0.25}
\begin{pspicture*}(-4.58,-1.76)(10,2.42)
\rput{0}(0,0){\psellipse(0,0)(2.5,1.5)}
\rput[tl](-3.68,2.4){$E$}
\psline[linestyle=dashed,dash=2pt 2pt](2.3,1.78)(-1.34,-0.26)
\psline[linestyle=dashed,dash=2pt 2pt](-1.64,2)(3.7,-1.5)
\psline[linestyle=dashed,dash=2pt 2pt](2.3,1.78)(3.7,-1.5)
\psline[linestyle=dashed,dash=2pt 2pt](-1.64,2)(-1.34,-0.26)
\psline[linestyle=dashed,dash=2pt 2pt](1.02,0.79)(-4.22,-0.82)
\psline[linestyle=dashed,dash=2pt 2pt](-0.74,0.91)(1.16,-0.47)
\psline[linestyle=dashed,dash=2pt 2pt](1.02,0.79)(1.16,-0.47)
\psline[linestyle=dashed,dash=2pt 2pt](-0.74,0.91)(-4.22,-0.82)
\rput[tl](7,1.1){$BD=6.38$}
\rput[tl](4,1.7){$AC=4.17$}
\rput[tl](7,1.7){$AD=3.57$}
\rput[tl](4,1.1){$BC=2.28$}
\rput[tl](4,0){$A'C'=5.48$}
\rput[tl](7,0){$A'D'=1.27$}
\rput[tl](4,-0.6){$B'C'=3.88$}
\rput[tl](7,-0.6){$B'D'=2.35$}
\begin{scriptsize}
\psdots[dotstyle=*,linecolor=blue](0,0)
\rput[bl](-0.2,-0.3){\blue{$O$}}
\psdots[dotstyle=*,linecolor=blue](2.3,1.78)
\rput[bl](2.38,1.9){\blue{$A$}}
\psdots[dotstyle=*,linecolor=red](1.02,0.79)
\rput[bl](1.16,0.56){\red{$A'$}}
\psdots[dotstyle=*,linecolor=blue](-1.64,2)
\rput[bl](-1.56,2.12){\blue{$B$}}
\psdots[dotstyle=*,linecolor=blue](-1.34,-0.26)
\rput[bl](-1.46,-0.6){\blue{$C$}}
\psdots[dotstyle=*,linecolor=blue](3.7,-1.5)
\rput[bl](3.78,-1.38){\blue{$D$}}
\psdots[dotstyle=*,linecolor=red](-0.74,0.91)
\rput[bl](-0.76,0.98){\red{$B'$}}
\psdots[dotstyle=*,linecolor=red](-4.22,-0.82)
\rput[bl](-4.32,-1.2){\red{$C'$}}
\psdots[dotstyle=*,linecolor=red](1.16,-0.47)
\rput[bl](0.88,-0.9){\red{$D'$}}
\end{scriptsize}
\end{pspicture*}
    \end{center}
    \caption{Elliptic Inversion and Cross Ratio.}
    \label{fig:cocientedobleinveliptica}
\end{figure} 
\begin{align*}
\left\{AB,CD\right\}&=\frac{AC\cdot BD}{AD\cdot BC}=\frac{4.17\cdot 2.28}{3.57\cdot 2.28}\approx 1.168, \\ \left\{A'B',C'D'\right\}&=\frac{A'C'\cdot B'D'}{A'D'\cdot B'C'}=\frac{5.48\cdot 2.35)}{1.27\cdot 3.88}\approx 2.613.
\end{align*}

\subsection{Inversion in an Ellipse and Harmonic Conjugates}
If $A$ and $B$ are two points on a line $l$, any pair of points $P$ and $Q$ on $l$ for which
\begin{align*}
\frac{AP}{PB}=\frac{AQ}{QB},
\end{align*}
are said to divide $\overline{AB}$\emph{ harmonically}. The points $P$ and $Q$ are called \emph{harmonic conjugates with respect to $A$ and $B$}. It is clear that two distinct points $P$ and $Q$ are harmonic conjugates with respecto to  $A$ and $B$ if and only if $\left\{AB,PQ\right\}=1$.

\begin{theorem}\label{armoinveliptica}
Let $E$ be an ellipse with center $O$,  and $\overline{Q_{1}Q}_{2}$ a diameter of  $E$. Let $P$ and $P'$ be distinct points  of  the ray $\stackrel{\longrightarrow}{OQ}_{1}$, which divide the segment  $\overline{Q_{1}Q}_{2}$ internally and externally. Then $P$ and $P'$ are harmonic conjugates with respect to  $Q_{1}$ and $Q_{2}$ if and only if $P$ and $P'$ are elliptic inverse points with respect  $E$.
\end{theorem}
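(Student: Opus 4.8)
The plan is to pass to coordinates adapted to the diameter and reduce the harmonic condition to an algebraic identity that turns out to be exactly the defining relation of elliptic inversion, so that both implications fall out of a single reversible computation.

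First I would set up the picture. Place $O$ at the origin and let the diameter $\overline{Q_1Q_2}$ lie along the $x$-axis, so that $Q_1=(w,0)$ and $Q_2=(-w,0)$ with $w=OQ_1=OQ_2>0$. The key initial remark is that, since $Q_1$ lies on $E$ and on the ray $\stackrel{\longrightarrow}{OQ}_1=\stackrel{\longrightarrow}{OP}$, the point $Q_1$ is precisely the point $Q$ of Definition \ref{dinve} for this ray; hence the radius of inversion along $\stackrel{\longrightarrow}{OP}$ is exactly $w=OQ_1$. Next, since both $P$ and $P'$ lie on the ray $\stackrel{\longrightarrow}{OQ}_1$, I can write $P=(p,0)$ and $P'=(p',0)$ with $p,p'>0$; the hypothesis that $P$ divides $\overline{Q_1Q_2}$ internally while $P'$ divides it externally then pins these down to $0<p<w<p'$ (the internal point lies between $O$ and $Q_1$, the external point lies beyond $Q_1$, no other location on the ray being possible).

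Then I would write the harmonic condition $\dfrac{Q_1P}{PQ_2}=\dfrac{Q_1P'}{P'Q_2}$ explicitly as $\dfrac{w-p}{w+p}=\dfrac{p'-w}{p'+w}$ using these sign determinations. Cross-multiplying $(w-p)(p'+w)=(w+p)(p'-w)$ and simplifying, every degree-one term cancels and the relation collapses to $pp'=w^2$, i.e. $OP\cdot OP'=w^2=(OQ)^2$. Since $P$ and $P'$ already lie on the common ray $\stackrel{\longrightarrow}{OP}$ from $O$, this last relation is exactly the statement $P'=\psi(P)$ for the elliptic inversion $\mathcal{E}(O,w)$; and as each step in the chain is an equivalence, the harmonic-conjugate condition and the elliptic-inverse condition are equivalent, which proves both directions simultaneously.

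There is no deep obstacle here. The only real care points are the bookkeeping of which of $P,P'$ is the internal and which the external division point — and verifying that membership in the ray $\stackrel{\longrightarrow}{OQ}_1$ forces those positions — together with the observation that the relevant ``radius'' $OQ$ for this particular ray coincides with the half-diameter $OQ_1$. One could alternatively run the reduction through the cross-ratio characterization $\left\{Q_1Q_2,PP'\right\}=1$ recorded just before the theorem, but the direct computation with signed distances along the diameter is the cleanest route.
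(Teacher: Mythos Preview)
Your argument is correct and follows essentially the same route as the paper: both reduce the harmonic condition on the diameter to the algebraic identity $OP\cdot OP'=w^2$ via the substitutions $Q_1P=w-OP$, $Q_2P=w+OP$, $Q_1P'=OP'-w$, $Q_2P'=w+OP'$, and then identify this with the defining relation of elliptic inversion along that ray. The only cosmetic differences are that you introduce explicit coordinates and package both implications as a single reversible chain, whereas the paper works with named distances and dispatches the converse with ``the proof is similar.''
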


\begin{proof}
Suppose that $P$ and $P'$ are harmonic points with respect to  $Q_{1}$ and $Q_{2}$. Then
\begin{align*}
\left\{Q_{1}Q_{2},PP'\right\}&=1,\\
\frac{Q_{1}P\cdot Q_{2}P'}{Q_{1}P'\cdot Q_{2}P}&=1.
\end{align*}
Note that if $P$ divide the segment  $\overline{Q_{1}Q}_{2}$ internally  and $P\in\stackrel{\longrightarrow}{OQ_{1}}$. Then $Q_{1}P=OQ_{1}-OP=w-OP$ and $Q_{2}P=OQ_{2}+OP=w+OP$. Moreover, $P'$ divide the segment  $\overline{Q_{1}Q}_{2}$ externally and $P'\in \stackrel{\longrightarrow}{OQ_{1}}$. Then $Q_{1}P'=OP'-OQ_{1}=OP'-w$ and $Q_{2}P'=OQ_{2}+OP'=w+OP'$. Hence
 \begin{align*}
\frac{(w-OP)(k+OP')}{(OP'-w)(w+OP)}&=1,\\
(w-OP)(w+OP')&=(OP'-w)(k+OP).
\end{align*}
Simplifying this equation, we have  $OP\cdot OP'=w^2$. Therefore  $P$ and $P'$ are elliptic inverse points with respect to $E$.

Conversely, if $P$ and $P'$ are elliptic inverse points with respect to $\mathcal{E}(O,w)$, the proof is similar.
\end{proof}

\subsection{Inversion in an Ellipse and Cartesian Coordinates}

\begin{theorem}\label{coordenadaselipse}
Let $E$ be an ellipse with center $O$ and equation $\frac{x^2}{a^2}+\frac{y^2}{b^2}=1$,  $a$ and $b$ are respectively the semi-major axis and semi-minor axis. Let $P=(u,v)$  and $P'=(x, y)$ be a pair of elliptic points with respect to $E$. Then
\begin{align}\label{ecuainvelipse}
x&=\frac{a^2b^2u}{b^2u^2+a^2v^2},\\
y&=\frac{a^2b^2v}{b^2u^2+a^2v^2}.
\end{align}
\end{theorem}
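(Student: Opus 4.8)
The plan is to work directly from Definition \ref{dinve} in coordinates, exploiting that every point of the ray $\stackrel{\longrightarrow}{OP}$ is a nonnegative scalar multiple of $(u,v)$.

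First I would parametrize the ray: since $O$ is the origin and $P=(u,v)\neq O$, the ray $\stackrel{\longrightarrow}{OP}$ is $\{\lambda(u,v):\lambda\ge 0\}$. Because $P'$ lies on this ray, $P'=(x,y)=t(u,v)$ for some $t>0$, i.e.\ $x=tu$, $y=tv$; determining the single scalar $t$ is the whole content of the theorem. Next I would locate $Q$, the intersection of the ray with $E$: writing $Q=s(u,v)$ with $s>0$ and substituting into $\frac{x^2}{a^2}+\frac{y^2}{b^2}=1$ gives $s^2\left(\frac{u^2}{a^2}+\frac{v^2}{b^2}\right)=1$, hence
\[
s^2=\frac{a^2b^2}{b^2u^2+a^2v^2},\qquad (OQ)^2=s^2(u^2+v^2)=\frac{a^2b^2(u^2+v^2)}{b^2u^2+a^2v^2}.
\]
Note the denominator $b^2u^2+a^2v^2$ is strictly positive because $(u,v)\neq(0,0)$, so all expressions are well defined.

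Then I would impose the defining relation $OP\cdot OP'=(OQ)^2$. Here $OP=\sqrt{u^2+v^2}$ and $OP'=t\sqrt{u^2+v^2}$, so $OP\cdot OP'=t(u^2+v^2)$. Equating with $(OQ)^2$ and cancelling the common factor $u^2+v^2>0$ yields $t=\dfrac{a^2b^2}{b^2u^2+a^2v^2}$. Substituting this back into $x=tu$ and $y=tv$ gives precisely the claimed formulas (\ref{ecuainvelipse}); one also checks $t>0$, confirming $P'$ lies on the ray and not its opposite.

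There is no serious obstacle here: the argument is a short computation. The only point requiring a little care is the bookkeeping of directions — making sure $Q$ and $P'$ are taken on the ray $\stackrel{\longrightarrow}{OP}$ rather than the full line — which is handled by the positivity of $s$ and $t$. As a sanity check one could verify that the map is involutive by composing it with itself, or that it fixes points of $E$ (where $b^2u^2+a^2v^2=a^2b^2$, forcing $t=1$), matching the remarks already made after Definition \ref{dinve}.
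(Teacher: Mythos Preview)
Your argument is correct and complete: you parametrize the ray, solve for the intersection $Q$ with $E$, compute $(OQ)^2$, and then impose $OP\cdot OP'=(OQ)^2$ to extract the scalar $t$. The positivity checks and the sanity checks at the end are appropriate.

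The paper, however, takes a genuinely different route. Instead of working straight from Definition~\ref{dinve}, it uses the pole--polar relationship: from an exterior point $P=(u,v)$ it draws the two tangent lines to $E$, lets $T$ and $M$ be the contact points, derives the equation $b^2ux+a^2vy=a^2b^2$ of the chord of contact $\stackrel{\longleftrightarrow}{TM}$, and then intersects this line with the ray $\stackrel{\longleftrightarrow}{OP}$ (equation $y=\tfrac{v}{u}x$) to obtain $P'$. Your approach is shorter, more self-contained, and handles interior and exterior points uniformly without ever mentioning tangents. The paper's approach, by contrast, exhibits the classical geometric fact that the inverse of $P$ is the foot of the polar line of $P$ on the ray $OP$; this is conceptually richer but requires the tangent-line machinery from analytic geometry and a separate (unwritten) argument for interior points.
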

\begin{proof}
Let $E$ be an ellipse with equation $\frac{x^2}{a^2}+\frac{y^2}{b^2}=1$. Suppose that $P=(u,v)$ is an exterior point to $E$. Let  $T=(x_{1},y_{1})$ and $M=(x_{2},y_{2})$ be the points of contact of the tangent lines to $E$ from $P$, see Figure \ref{fig:coordenadasinvelipse}. Then the tangent lines $\stackrel{\longleftrightarrow}{PT}$ and $\stackrel{\longleftrightarrow}{PM}$ have the following equations \cite[p. 186]{LEM}:
\begin{align}\label{coor1}
b^2x_{1}x+a^2y_{1}y&=a^2b^2,\\
b^2x_{2}x+a^2y_{2}y&=a^2b^2.
\end{align}
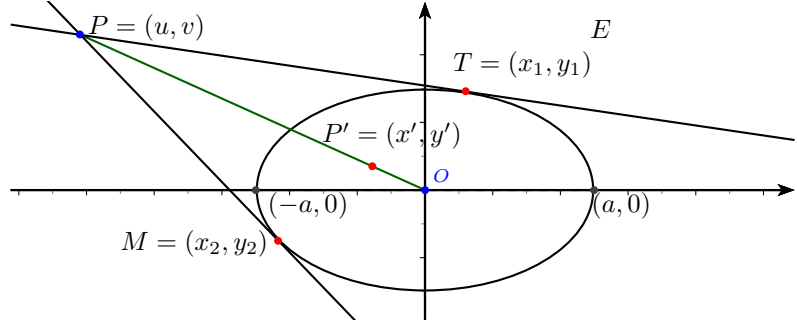
\begin{figure}[h]
    \begin{center}
\newrgbcolor{qqwuqq}{0 0.39 0}
\psset{xunit=0.9cm,yunit=0.9cm,algebraic=true,dotstyle=o,dotsize=3pt 0,linewidth=0.8pt,arrowsize=3pt 2,arrowinset=0.25}
\begin{pspicture*}(-6.12,-1.92)(5.46,2.78)
\psaxes[labelFontSize=\scriptstyle,xAxis=true,yAxis=true,labels=none,Dx=1,Dy=1,ticksize=-2pt 0,subticks=2]{->}(0,0)(-6.12,-1.92)(5.46,2.78)
\rput{0}(0,0){\psellipse(0,0)(2.5,1.5)}
\rput[tl](2.44,2.52){$E$}
\psline[linestyle=dashed,dash=3pt 3pt](-2.5,0)(2.5,0)
\psline[linecolor=qqwuqq](0,0)(-5.1,2.3)
\psplot{-6.12}{5.46}{(-8.8-3.05*x)/2.93}
\psplot{-6.12}{5.46}{(--8.8-0.84*x)/5.7}
\rput[tl](0.42,2.05){$T=(x_1,y_1)$}
\rput[tl](-4.5,-0.6){$M=(x_2,y_2)$}
\rput[tl](2.46,-0.04){$(a,0)$}
\rput[tl](-2.32,-0.04){$(-a,0)$}
\rput[tl](-4.98,2.6){$P=(u,v)$}
\rput[tl](-1.52,1){$P'=(x',y')$}
\begin{scriptsize}
\psdots[dotstyle=*,linecolor=blue](0,0)
\rput[bl](0.12,0.12){\blue{$O$}}
\psdots[dotstyle=*,linecolor=darkgray](-2.5,0)
\psdots[dotstyle=*,linecolor=darkgray](2.5,0)
\psdots[dotstyle=*,linecolor=blue](-5.1,2.3)
\psdots[dotstyle=*,linecolor=red](-0.78,0.35)
\psdots[dotstyle=*,linecolor=red](0.6,1.46)
\psdots[dotstyle=*,linecolor=red](-2.17,-0.75)
\end{scriptsize}
\end{pspicture*}
    \end{center}
    \caption{Inversion in an Ellipse and  Cartesian Coordinates.}
    \label{fig:coordenadasinvelipse}
\end{figure}

Particularly $P=(u,v)$ satisfies these equations. Hence
\begin{align}\label{coor2}
b^2x_{1}u+a^2y_{1}v&=a^2b^2,\\
b^2x_{2}u+a^2y_{2}v&=a^2b^2.\label{coor3}
\end{align}
Equating equations (\ref{coor2}) and (\ref{coor3})
\begin{align*}
b^2x_{1}u+a^2y_{1}v&=b^2x_{2}u+a^2y_{2}v,\\
-\frac{b^2u}{a^2v}&=\frac{y_{1}-y_{2}}{x_{1}-x_{2}}.
\end{align*}
Then the line $\stackrel{\longleftrightarrow}{TM}$ has slope $-\frac{b^2u}{a^2v}$. Therefore, $\stackrel{\longleftrightarrow}{TM}$ has the following equation
\begin{align}
y-y_{1}&=-\frac{b^2u}{a^2v}\left(x-x_{1}\right),\\
a^2vy-a^2vy_{1}&=-b^2ux+b^2ux_{1},\\
a^2vy+b^2ux&=b^2ux_{1}+a^2vy_{1}.\label{coor5}
\end{align}
Replacing (\ref{coor2}) in (\ref{coor5}), we have
\begin{align}\label{coor6}
a^2vy+b^2ux&=a^2b^2,
\end{align}
i.e., (\ref{coor6}) is the equation of the line $\stackrel{\longleftrightarrow}{TM}$.
On the other hand, the line  $\stackrel{\longleftrightarrow}{OP}$ has slope  $\frac{v}{u}$, then its equation is $y=\frac{v}{u}x$. As $P'$ is the meeting point of the lines  $\stackrel{\longleftrightarrow}{TM}$ and $\stackrel{\longleftrightarrow}{OP}$, then
\begin{align*}
a^2v\left(\frac{v}{u}x\right)+b^2ux&=a^2b^2\\
\left(a^2v^2+b^2u^2\right)x&=ua^2b^2\\
x&=\frac{ua^2b^2}{a^2v^2+b^2u^2}
\end{align*}
and
\begin{align*}
y&=\frac{va^2b^2}{a^2v^2+b^2u^2}
\end{align*}
When $P$  is an interior point of $E$, the proof is analogous.
\end{proof}

When $a=b=1$, i.e., when $E$ is a circle, we obtain
\begin{align*}
\psi:(u,v)&\longmapsto \left(\frac{u}{v^2+u^2},\frac{v}{v^2+u^2} \right)
\end{align*}

\section{Elliptic Inversion of Curves}

In this section we study  the inversion in an ellipse of lines, ellipses and other curves.  If a point $P$ moves on a curve $\mathcal{C}$, and $P'$, the elliptic inverse of $P$ with respect to the $E$ moves on a curve $\mathcal{C}'$, the curve $\mathcal{C}'$  is called the elliptic inverse of $\mathcal{C}$. It is evident that $\mathcal{C}$ is the elliptic inverse of $\mathcal{C}'$ in $E$.

\begin{theorem}\label{invrelipse}
\begin{enumerate}[i.]
\item The elliptic inverse of a line $l$ which pass through the center of the elliptic inversion is the line itself.
    \item  The elliptic inverse of a line $l$ which does not pass through the center of the elliptic inversion is an ellipse which pass through the center of inversion, see Figure \ref{fig:rectasinversionelipse2}.
\end{enumerate}
\end{theorem}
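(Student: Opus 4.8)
The plan is to use the explicit coordinate formula for the elliptic inversion furnished by Theorem \ref{coordenadaselipse}. Place the ellipse of inversion in the standard position $\frac{x^2}{a^2}+\frac{y^2}{b^2}=1$, so that a point $P=(u,v)$ and its elliptic inverse $P'=(x,y)$ are related by
\[
x=\frac{a^2b^2u}{b^2u^2+a^2v^2},\qquad y=\frac{a^2b^2v}{b^2u^2+a^2v^2}.
\]
The key observation I would exploit is that this map is an \emph{involution}: applying the same formulas to $P'=(x,y)$ returns $P=(u,v)$. Hence, to find the image of a curve $\mathcal{C}$, it suffices to substitute the expressions for $u,v$ in terms of $x,y$ into the equation of $\mathcal{C}$ and simplify. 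Carrying out the inversion twice on $(u,v)$ shows the denominator behaves multiplicatively, so one gets $u=\dfrac{a^2b^2x}{b^2x^2+a^2y^2}$ and $v=\dfrac{a^2b^2y}{b^2x^2+a^2y^2}$, the same formula with the roles of the variables swapped.

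For part (i), if $l$ passes through the center $O$, then $l$ has the form $y=mx$ (or $x=0$). Substituting $v=mu$ into the coordinate formulas, one sees immediately that $y=mx$ as well: the point $P'$ lies on the same line. This is also clear directly from Definition \ref{dinve}, since $P'$ lies on the ray $\overrightarrow{OP}$ by construction, so the line through $O$ is mapped into itself; one should note the center $O$ itself is handled by the extended plane $\mathbb{R}^2_\infty$ of Definition \ref{dinve2}, with $O\mapsto O_\infty$.

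For part (ii), suppose $l$ does not pass through $O$, so $l$ has an equation $Au+Bv=C$ with $C\neq 0$. Substituting the inversion formulas $u=\dfrac{a^2b^2x}{b^2x^2+a^2y^2}$ and $v=\dfrac{a^2b^2y}{b^2x^2+a^2y^2}$ and clearing the common denominator $b^2x^2+a^2y^2$, the equation becomes
\[
A a^2 b^2 x + B a^2 b^2 y = C\,(b^2x^2+a^2y^2),
\]
that is, $C b^2 x^2 + C a^2 y^2 - A a^2 b^2 x - B a^2 b^2 y = 0$. Since $C\neq 0$ this is a genuine second-degree equation in which the $x^2$ and $y^2$ coefficients, namely $Cb^2$ and $Ca^2$, are nonzero and of the same sign, and there is no $xy$ term; completing the square in $x$ and in $y$ exhibits it as an ellipse (with axes parallel to those of $E$), and the absence of a constant term means $(x,y)=(0,0)$ satisfies it, so this ellipse passes through the center of inversion $O$. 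Conversely, every point of that ellipse other than $O$ is the elliptic inverse of a point of $l$, since the map is an involution. I would close by remarking that, as expected, the image ellipse has the same eccentricity-type ratio $a{:}b$ as the ellipse of inversion, since the quadratic part is proportional to $b^2x^2+a^2y^2$.

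The main obstacle is not conceptual but bookkeeping: one must verify the involution claim cleanly (so that the substitution $u\leftrightarrow$ rational function of $x,y$ is legitimate) and must check the degenerate sub-cases — a line through $O$ falling out of case (ii) formally (there $C=0$ and the "ellipse" collapses to the line of case (i)), and the role of $O_\infty$ so that the correspondence is genuinely one-to-one on $\mathbb{R}^2_\infty$. Handling the vertical-line case $x=0$ or $u=$ const separately, or simply observing it is covered by the general equation $Au+Bv=C$ with $B=0$, keeps the argument uniform.
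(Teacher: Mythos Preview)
Your proposal is correct and follows essentially the same route as the paper: both arguments place $E$ in standard position, invoke the coordinate formula of Theorem~\ref{coordenadaselipse}, and substitute into the linear equation of $l$ to obtain either the same line (when the constant term vanishes) or the quadratic $b^2x^2+a^2y^2$ plus linear terms (when it does not). Your write-up is in fact a bit more careful than the paper's, since you explicitly justify the substitution via the involution property, verify by completing the square that the resulting quadric is genuinely an ellipse, and flag the degenerate and vertical-line sub-cases.
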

\begin{proof}
\textit{i.} Let $E$ be an ellipse of inversion with equation $\frac{x^2}{a^2}+\frac{y^2}{b^2}=1$ and  $l$ a line with  equation $Mx + Ny = 0$. Applying $\psi$ to $Mx + Ny = 0$ gives $Mx + Ny = 0$. Indeed
\begin{align*}
Mx + Ny &= 0\\
M\left(\frac{a^2b^2x}{b^2x^2+a^2y^2}\right)+N\left(\frac{a^2b^2y}{b^2x^2+a^2y^2}\right)&=0\\
Ma^2b^2x+Na^2b^2y&=0\\
Mx + Ny &= 0
\end{align*}

 \textit{ii.} Let $E$ be an ellipse of inversion with equation $\frac{x^2}{a^2}+\frac{y^2}{b^2}=1$ and  $l$ a line with  equation  $Mx + Ny + P = 0$  ($P\neq 0$). Applying $\psi$ to $Mx + Ny + P = 0$ gives $\frac{x^2}{a^2}+\frac{y^2}{b^2}+\frac{M}{P}x+\frac{N}{P}y=0$. Indeed
\begin{align*}
Mx + Ny + P &= 0\\
M\left(\frac{a^2b^2x}{b^2x^2+a^2y^2}\right)+N\left(\frac{a^2b^2y}{b^2x^2+a^2y^2}\right)+ P&=0\\
M\left(a^2b^2x\right)+N\left(a^2b^2y\right)+ \left(b^2x^2+a^2y^2\right)P&=0\\
\frac{x^2}{a^2}+\frac{y^2}{b^2}+\frac{M}{P}x+\frac{N}{P}y&=0
\end{align*}
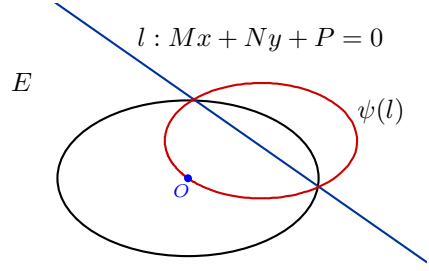
\begin{figure}[h]
	\begin{center}
\newrgbcolor{qqttzz}{0 0.2 0.6}
\newrgbcolor{ccqqqq}{0.8 0 0}
\psset{xunit=0.7cm,yunit=0.7cm,algebraic=true,dotstyle=o,dotsize=3pt 0,linewidth=0.8pt,arrowsize=3pt 2,arrowinset=0.25}
\begin{pspicture*}(-4.02,-1.68)(4.54,3.32)
\rput{0}(0,0){\psellipse(0,0)(2.5,1.5)}
\rput[tl](-3.36,2){$E$}
\psplot[linecolor=qqttzz]{-4.02}{4.54}{(--7.75-3.44*x)/4.92}
\pscustom[linecolor=ccqqqq]{\moveto(-0.36,1.04)
\lineto(-0.33,1.09)
\lineto(-0.29,1.15)
\lineto(-0.24,1.22)
\lineto(-0.17,1.29)
\lineto(-0.07,1.38)
\lineto(-0.01,1.42)
\lineto(0.05,1.46)
\lineto(0.09,1.49)
\lineto(0.1,1.49)
\lineto(0.11,1.5)
\lineto(0.11,1.5)
\lineto(0.11,1.5)
\lineto(0.11,1.5)
\lineto(0.11,1.5)
\lineto(0.11,1.5)
\lineto(0.11,1.5)
\moveto(2.49,-0.16)
\lineto(2.49,-0.16)
\lineto(2.49,-0.16)
\lineto(2.49,-0.16)
\lineto(2.49,-0.16)
\lineto(2.49,-0.16)
\lineto(2.49,-0.16)
\lineto(2.49,-0.16)
\lineto(2.49,-0.16)
\lineto(2.49,-0.16)
\lineto(2.49,-0.16)
\lineto(2.48,-0.16)
\lineto(2.48,-0.16)
\lineto(2.48,-0.16)
\lineto(2.48,-0.16)
\lineto(2.48,-0.17)
\lineto(2.47,-0.17)
\lineto(2.46,-0.17)
\lineto(2.43,-0.19)
\lineto(2.38,-0.21)
\lineto(2.32,-0.23)
\lineto(2.27,-0.25)
\lineto(2.21,-0.26)
\lineto(2.16,-0.28)
\lineto(2.11,-0.29)
\lineto(2.06,-0.31)
\lineto(1.96,-0.33)
\lineto(1.87,-0.34)
\lineto(1.78,-0.36)
\lineto(1.69,-0.37)
\lineto(1.61,-0.37)
\lineto(1.54,-0.38)
\lineto(1.46,-0.38)
\lineto(1.39,-0.38)
\lineto(1.33,-0.38)
\lineto(1.27,-0.38)
\lineto(1.21,-0.38)
\lineto(1.16,-0.37)
\lineto(1.1,-0.37)
\lineto(1.01,-0.36)
\lineto(0.95,-0.35)
\lineto(0.89,-0.34)
\lineto(0.83,-0.33)
\lineto(0.78,-0.32)
\lineto(0.69,-0.3)
\lineto(0.61,-0.28)
\lineto(0.54,-0.26)
\lineto(0.49,-0.24)
\lineto(0.43,-0.22)
\lineto(0.39,-0.2)
\lineto(0.35,-0.19)
\lineto(0.31,-0.17)
\lineto(0.28,-0.16)
\lineto(0.25,-0.14)
\lineto(0.22,-0.13)
\lineto(0.2,-0.12)
\lineto(0.17,-0.11)
\lineto(0.15,-0.09)
\lineto(0.13,-0.08)
\lineto(0.12,-0.07)
\lineto(0.1,-0.07)
\lineto(0.09,-0.06)
\lineto(0.07,-0.05)
\lineto(0.06,-0.04)
\lineto(0.05,-0.03)
\lineto(0.04,-0.03)
\lineto(0.03,-0.02)
\lineto(0.02,-0.01)
\lineto(0.01,-0.01)
\lineto(0,0)
\lineto(0,0)
\lineto(0,0)
\lineto(0,0)
\lineto(0,0)
\lineto(0,0)
\lineto(0,0)
\lineto(0,0)
\lineto(0,0)
\lineto(0,0)
\lineto(0,0)
\lineto(0,0)
\lineto(0,0)
\lineto(0,0)
\lineto(0,0)
\lineto(0,0)
\lineto(0,0)
\lineto(0,0)
\lineto(0,0)
\lineto(0,0)
\lineto(0,0)
\lineto(0,0)
\lineto(0,0)
\lineto(0,0)
\lineto(0,0)
\lineto(0,0)
\lineto(0,0)
\lineto(0,0)
\lineto(0,0)
\lineto(0,0)
\lineto(0,0)
\lineto(0,0)
\lineto(0,0)
\lineto(0,0)
\lineto(0,0)
\lineto(-0.01,0)
\lineto(-0.01,0.01)
\lineto(-0.02,0.01)
\lineto(-0.03,0.02)
\lineto(-0.04,0.03)
\lineto(-0.05,0.03)
\lineto(-0.05,0.04)
\lineto(-0.06,0.05)
\lineto(-0.07,0.05)
\lineto(-0.08,0.06)
\lineto(-0.09,0.07)
\lineto(-0.1,0.07)
\lineto(-0.11,0.08)
\lineto(-0.11,0.09)
\lineto(-0.12,0.1)
\lineto(-0.13,0.1)
\lineto(-0.14,0.11)
\lineto(-0.15,0.12)
\lineto(-0.16,0.13)
\lineto(-0.17,0.14)
\lineto(-0.17,0.14)
\lineto(-0.18,0.15)
\lineto(-0.19,0.16)
\lineto(-0.2,0.17)
\lineto(-0.21,0.18)
\lineto(-0.22,0.19)
\lineto(-0.23,0.2)
\lineto(-0.23,0.21)
\lineto(-0.24,0.22)
\lineto(-0.25,0.23)
\lineto(-0.26,0.24)
\lineto(-0.27,0.25)
\lineto(-0.28,0.26)
\lineto(-0.28,0.27)
\lineto(-0.29,0.28)
\lineto(-0.3,0.29)
\lineto(-0.31,0.3)
\lineto(-0.31,0.31)
\lineto(-0.32,0.32)
\lineto(-0.33,0.34)
\lineto(-0.34,0.35)
\lineto(-0.34,0.36)
\lineto(-0.35,0.37)
\lineto(-0.36,0.39)
\lineto(-0.36,0.4)
\lineto(-0.37,0.41)
\lineto(-0.38,0.42)
\lineto(-0.38,0.44)
\lineto(-0.39,0.45)
\lineto(-0.39,0.47)
\lineto(-0.4,0.48)
\lineto(-0.4,0.49)
\lineto(-0.41,0.51)
\lineto(-0.41,0.52)
\lineto(-0.42,0.54)
\lineto(-0.42,0.55)
\lineto(-0.42,0.57)
\lineto(-0.43,0.59)
\lineto(-0.43,0.6)
\lineto(-0.43,0.62)
\lineto(-0.44,0.64)
\lineto(-0.44,0.65)
\lineto(-0.44,0.67)
\lineto(-0.44,0.69)
\lineto(-0.44,0.7)
\lineto(-0.44,0.72)
\lineto(-0.44,0.74)
\lineto(-0.44,0.76)
\lineto(-0.44,0.79)
\lineto(-0.43,0.81)
\lineto(-0.43,0.84)
\lineto(-0.42,0.87)
\lineto(-0.41,0.91)
\lineto(-0.4,0.95)
\lineto(-0.38,0.99)
\lineto(-0.36,1.04)
\lineto(-0.36,1.04)
\lineto(-0.36,1.04)
}
\pscustom[linecolor=ccqqqq]{\moveto(0.22,1.56)
\lineto(0.22,1.56)
\lineto(0.27,1.58)
\lineto(0.33,1.61)
\lineto(0.39,1.63)
\lineto(0.45,1.66)
\lineto(0.52,1.68)
\lineto(0.59,1.7)
\lineto(0.67,1.72)
\lineto(0.75,1.74)
\lineto(0.84,1.76)
\lineto(0.94,1.78)
\lineto(0.98,1.78)
\lineto(1.04,1.79)
\lineto(1.09,1.8)
\lineto(1.14,1.8)
\lineto(1.2,1.8)
\lineto(1.25,1.81)
\lineto(1.31,1.81)
\lineto(1.37,1.81)
\lineto(1.43,1.81)
\lineto(1.49,1.81)
\lineto(1.56,1.81)
\lineto(1.62,1.8)
\lineto(1.68,1.8)
\lineto(1.75,1.79)
\lineto(1.82,1.78)
\lineto(1.88,1.77)
\lineto(1.95,1.76)
\lineto(2.02,1.74)
\lineto(2.09,1.73)
\lineto(2.16,1.71)
\lineto(2.22,1.69)
\lineto(2.29,1.67)
\lineto(2.36,1.64)
\lineto(2.42,1.62)
\lineto(2.49,1.59)
\lineto(2.55,1.56)
\lineto(2.61,1.53)
\lineto(2.67,1.49)
\lineto(2.73,1.46)
\lineto(2.78,1.42)
\lineto(2.84,1.38)
\lineto(2.89,1.34)
\lineto(2.98,1.26)
\lineto(3.05,1.16)
\lineto(3.12,1.07)
\lineto(3.16,0.97)
\lineto(3.19,0.87)
\lineto(3.21,0.78)
\lineto(3.21,0.68)
\lineto(3.2,0.58)
\lineto(3.18,0.49)
\lineto(3.14,0.4)
\lineto(3.09,0.32)
\lineto(3.04,0.24)
\lineto(2.97,0.17)
\lineto(2.9,0.1)
\lineto(2.83,0.04)
\lineto(2.75,-0.02)
\lineto(2.67,-0.07)
\lineto(2.59,-0.11)
\lineto(2.51,-0.15)
\lineto(2.49,-0.16)
\lineto(2.49,-0.16)
\lineto(2.49,-0.16)
\lineto(2.49,-0.16)
\lineto(2.49,-0.16)
\lineto(2.49,-0.16)
\lineto(2.49,-0.16)
\lineto(2.49,-0.16)
\lineto(2.49,-0.16)
\moveto(0.11,1.5)
\lineto(0.11,1.5)
\lineto(0.11,1.5)
\lineto(0.11,1.5)
\lineto(0.11,1.5)
\lineto(0.11,1.5)
\lineto(0.11,1.5)
\lineto(0.11,1.5)
\lineto(0.11,1.5)
\lineto(0.11,1.5)
\lineto(0.11,1.5)
\lineto(0.11,1.5)
\lineto(0.11,1.5)
\lineto(0.11,1.5)
\lineto(0.11,1.5)
\lineto(0.11,1.5)
\lineto(0.12,1.5)
\lineto(0.12,1.51)
\lineto(0.14,1.51)
\lineto(0.17,1.53)
\lineto(0.22,1.56)
\lineto(0.22,1.56)
}
\rput[tl](-0.96,2.86){$l:Mx+Ny+P=0$}
\rput[tl](3.22,1.52){$\psi(l)$}
\begin{scriptsize}
\psdots[dotstyle=*,linecolor=blue](0,0)
\rput[bl](-0.28,-0.35){\blue{$O$}}
\end{scriptsize}
\end{pspicture*}
	\end{center}
	\caption{Elliptic Inversion of the line $l$.}
	\label{fig:rectasinversionelipse2}
\end{figure}
Moreover,  it is clear that the ellipse passing through the center of inversion.
\end{proof}

\begin{corollary}
Let $l_1$ and $l_2$ be perpendicular lines intersecting at point  $P$. Then
\begin{enumerate}[i.]
    \item If $P\neq O$, then $\psi(l_1)$ and  $\psi(l_2)$ are orthogonal ellipses (their tangents at the points of intersection are perpendicular), which  pass through $P'$ and  $O$.
    \item If $P=O$, then $\psi(l_1)$ and  $\psi(l_2)$ are perpendicular lines.
    \item  If $l_1$ through $O$ but $l_2$ not through $O$, then $\psi(l_1)$ is an ellipse  and  $\psi(l_2)$ is an line which  passes through $O$ and it is orthogonal to $\psi(l_1)$ in $O$.
\end{enumerate}
\end{corollary}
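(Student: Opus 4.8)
The plan is to reduce the whole corollary to a single statement about tangent directions at the centre $O$, deduce (ii) and (iii) at once, and then isolate the one genuinely delicate point.

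\emph{Key observation.} Write a line $l$ not through $O$ as $Mx+Ny+c=0$ with $c\neq 0$. Then the tangent to the ellipse $\psi(l)$ at $O$ is parallel to $l$. This is immediate from Theorem~\ref{invrelipse}(ii): there $\psi(l)$ is the zero locus of $\Phi(x,y)=\frac{x^2}{a^2}+\frac{y^2}{b^2}+\frac{M}{c}x+\frac{N}{c}y$, and $\nabla\Phi(O)=\bigl(\tfrac{M}{c},\tfrac{N}{c}\bigr)$ is a nonzero multiple of the normal $(M,N)$ of $l$; hence $\psi(l)$ and $l$ have parallel normals at $O$, and so parallel tangents there.

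With this the three items fall out. For (ii): $P=O$ forces $l_1$ and $l_2$ through $O$, so Theorem~\ref{invrelipse}(i) gives $\psi(l_i)=l_i$, and the images are the original perpendicular lines. For (iii): $\psi(l_1)=l_1$ is a line through $O$ by Theorem~\ref{invrelipse}(i), while $\psi(l_2)$ is an ellipse through $O$ whose tangent at $O$ is parallel to $l_2$ by the Key observation; since $l_1\perp l_2$, the line $\psi(l_1)$ is perpendicular to that tangent, i.e.\ $\psi(l_1)$ meets $\psi(l_2)$ orthogonally at $O$. For (i): neither $l_i$ passes through $O$, so $\psi(l_1),\psi(l_2)$ are ellipses through $O$ by Theorem~\ref{invrelipse}(ii); from $P\in l_1\cap l_2$ and involutivity, $P'=\psi(P)\in\psi(l_1)\cap\psi(l_2)$, and $P'\neq O$ since $\psi$ is a bijection with $\psi(O)=O_\infty$, so both ellipses pass through $O$ and $P'$; at $O$ the Key observation makes their tangents parallel to $l_1$ and $l_2$, hence perpendicular.

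The main obstacle is the claimed orthogonality at the \emph{second} intersection point $P'$ in (i). Elliptic inversion is not conformal: one can write $\psi=A^{-1}\circ\iota\circ A$, where $\iota$ is the ordinary inversion in the unit circle and $A\colon(x,y)\mapsto(x/a,y/b)$ is the affine map carrying $E$ to that circle, and the linear part $A^{-1}$ distorts angles everywhere except where the ``parallel tangent'' phenomenon at $O$ happens to rescue them. The natural attempt is to pass to the circle picture---$A(l_1),A(l_2)$ are two lines through $A(P)$, $\iota$ turns them into two circles through $O$ and through $\iota(A(P))$ meeting at equal angles at both points, and one then tracks how $A^{-1}$ acts on the two tangent directions at $\iota(A(P))$---but $A^{-1}$ need not send those to perpendicular directions. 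A quick check confirms it does not in general: for $a=2$, $b=1$, $l_1\colon y=1$, $l_2\colon x=1$ one gets $\psi(l_1)\colon\tfrac{x^2}{4}+y^2-y=0$ and $\psi(l_2)\colon\tfrac{x^2}{4}+y^2-x=0$, which meet at $O$ and at $P'=(\tfrac45,\tfrac45)$, with tangents at $P'$ in the directions $(3,-2)$ and $(8,3)$, not perpendicular. So the remaining work on (i) is to decide whether the statement should assert orthogonality only at $O$, or whether some extra hypothesis restores it at $P'$ (it does hold, of course, when $a=b$, i.e.\ the classical case); everything else in the corollary follows cleanly from Theorem~\ref{invrelipse} and the Key observation.
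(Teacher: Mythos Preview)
Your approach is essentially the paper's: compute the tangent to $\psi(l)$ at $O$ via the gradient of $\Phi(x,y)=\tfrac{x^2}{a^2}+\tfrac{y^2}{b^2}+\tfrac{M}{c}x+\tfrac{N}{c}y$ and observe it is parallel to $l$, so perpendicular lines map to curves meeting orthogonally at $O$. The paper does exactly this computation for part~(i) (writing the tangents at $O$ explicitly as $Mx+Ny=0$ and $-Nx+My=0$) and dispatches (ii) and (iii) by reference to Theorem~\ref{invrelipse}, just as you do.

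Where you go further is the orthogonality at the \emph{second} intersection point $P'$ in (i). You are right to flag this: the paper's own proof verifies perpendicular tangents only at $O$ and is silent about $P'$, while the parenthetical in the statement (``their tangents at the points of intersection are perpendicular'') reads as a claim at both points. Your counterexample is correct: with $a=2$, $b=1$, $l_1\colon y=1$, $l_2\colon x=1$, the images meet at $P'=(\tfrac45,\tfrac45)$ with tangent directions $(3,-2)$ and $(8,3)$, whose inner product is $18\neq 0$. So the statement as written over-claims; the argument in the paper (and yours) establishes orthogonality only at $O$, and that is all that is true in general. Your diagnosis via $\psi=A^{-1}\circ\iota\circ A$ is the right explanation: the affine rescaling $A^{-1}$ preserves the perpendicular pair of tangent directions at $O$ only because those directions happen to be the images under $A$ of an orthogonal pair, a coincidence that does not recur at $\iota(A(P))$.

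One small remark on (iii): the statement in the paper has $\psi(l_1)$ and $\psi(l_2)$ interchanged (if $l_1$ passes through $O$ then $\psi(l_1)$ is the line, not the ellipse). You silently corrected this in your proof; it is worth noting explicitly.
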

\begin{proof}
\textit{i.}  Let $E$ be an ellipse of inversion with equation  $\frac{x^2}{a^2}+\frac{y^2}{b^2}=1$. Let   $l$ and $m$ be two perpendicular lines intersecting at $P$, ($P\neq O$), with respectively equations   $Mx+ Ny+P=0 $ ($P\neq0$) and $My-Nx+D=0$ ($D\neq 0$), see  Figure \ref{fig:RectaPerpenInvElip}.

\begin{figure}[h]
	\begin{center}
	\newrgbcolor{qqttzz}{0 0.2 0.6}
\newrgbcolor{ccqqqq}{0.8 0 0}
\newrgbcolor{xdxdff}{0.49 0.49 1}
\psset{xunit=0.7cm,yunit=0.7cm,algebraic=true,dotstyle=o,dotsize=3pt 0,linewidth=0.8pt,arrowsize=3pt 2,arrowinset=0.25}
\begin{pspicture*}(-8,-1.7)(5.48,3.44)
\rput{0}(0,0){\psellipse(0,0)(2.5,1.5)}
\rput[tl](-1.84,1.74){$E$}
\psplot[linecolor=qqttzz]{-6.06}{5.48}{(--7.75-3.44*x)/4.92}
\pscustom[linecolor=ccqqqq]{\moveto(-0.36,1.04)
\lineto(-0.33,1.09)
\lineto(-0.29,1.15)
\lineto(-0.24,1.22)
\lineto(-0.17,1.29)
\lineto(-0.07,1.38)
\lineto(-0.01,1.42)
\lineto(0.05,1.46)
\lineto(0.09,1.49)
\lineto(0.1,1.49)
\lineto(0.11,1.5)
\lineto(0.11,1.5)
\lineto(0.11,1.5)
\lineto(0.11,1.5)
\lineto(0.11,1.5)
\lineto(0.11,1.5)
\lineto(0.11,1.5)
\moveto(2.49,-0.16)
\lineto(2.49,-0.16)
\lineto(2.49,-0.16)
\lineto(2.49,-0.16)
\lineto(2.49,-0.16)
\lineto(2.49,-0.16)
\lineto(2.49,-0.16)
\lineto(2.49,-0.16)
\lineto(2.49,-0.16)
\lineto(2.49,-0.16)
\lineto(2.49,-0.16)
\lineto(2.48,-0.16)
\lineto(2.48,-0.16)
\lineto(2.48,-0.16)
\lineto(2.48,-0.16)
\lineto(2.48,-0.17)
\lineto(2.47,-0.17)
\lineto(2.46,-0.17)
\lineto(2.43,-0.19)
\lineto(2.38,-0.21)
\lineto(2.32,-0.23)
\lineto(2.27,-0.25)
\lineto(2.21,-0.26)
\lineto(2.16,-0.28)
\lineto(2.11,-0.29)
\lineto(2.06,-0.31)
\lineto(1.96,-0.33)
\lineto(1.87,-0.34)
\lineto(1.78,-0.36)
\lineto(1.69,-0.37)
\lineto(1.61,-0.37)
\lineto(1.54,-0.38)
\lineto(1.46,-0.38)
\lineto(1.39,-0.38)
\lineto(1.33,-0.38)
\lineto(1.27,-0.38)
\lineto(1.21,-0.38)
\lineto(1.16,-0.37)
\lineto(1.1,-0.37)
\lineto(1.01,-0.36)
\lineto(0.95,-0.35)
\lineto(0.89,-0.34)
\lineto(0.83,-0.33)
\lineto(0.78,-0.32)
\lineto(0.69,-0.3)
\lineto(0.61,-0.28)
\lineto(0.54,-0.26)
\lineto(0.49,-0.24)
\lineto(0.43,-0.22)
\lineto(0.39,-0.2)
\lineto(0.35,-0.19)
\lineto(0.31,-0.17)
\lineto(0.28,-0.16)
\lineto(0.25,-0.14)
\lineto(0.22,-0.13)
\lineto(0.2,-0.12)
\lineto(0.17,-0.11)
\lineto(0.15,-0.09)
\lineto(0.13,-0.08)
\lineto(0.12,-0.07)
\lineto(0.1,-0.07)
\lineto(0.09,-0.06)
\lineto(0.07,-0.05)
\lineto(0.06,-0.04)
\lineto(0.05,-0.03)
\lineto(0.04,-0.03)
\lineto(0.03,-0.02)
\lineto(0.02,-0.01)
\lineto(0.01,-0.01)
\lineto(0,0)
\lineto(0,0)
\lineto(0,0)
\lineto(0,0)
\lineto(0,0)
\lineto(0,0)
\lineto(0,0)
\lineto(0,0)
\lineto(0,0)
\lineto(0,0)
\lineto(0,0)
\lineto(0,0)
\lineto(0,0)
\lineto(0,0)
\lineto(0,0)
\lineto(0,0)
\lineto(0,0)
\lineto(0,0)
\lineto(0,0)
\lineto(0,0)
\lineto(0,0)
\lineto(0,0)
\lineto(0,0)
\lineto(0,0)
\lineto(0,0)
\lineto(0,0)
\lineto(0,0)
\lineto(0,0)
\lineto(0,0)
\lineto(0,0)
\lineto(0,0)
\lineto(0,0)
\lineto(0,0)
\lineto(0,0)
\lineto(0,0)
\lineto(-0.01,0)
\lineto(-0.01,0.01)
\lineto(-0.02,0.01)
\lineto(-0.03,0.02)
\lineto(-0.04,0.03)
\lineto(-0.05,0.03)
\lineto(-0.05,0.04)
\lineto(-0.06,0.05)
\lineto(-0.07,0.05)
\lineto(-0.08,0.06)
\lineto(-0.09,0.07)
\lineto(-0.1,0.07)
\lineto(-0.11,0.08)
\lineto(-0.11,0.09)
\lineto(-0.12,0.1)
\lineto(-0.13,0.1)
\lineto(-0.14,0.11)
\lineto(-0.15,0.12)
\lineto(-0.16,0.13)
\lineto(-0.17,0.14)
\lineto(-0.17,0.14)
\lineto(-0.18,0.15)
\lineto(-0.19,0.16)
\lineto(-0.2,0.17)
\lineto(-0.21,0.18)
\lineto(-0.22,0.19)
\lineto(-0.23,0.2)
\lineto(-0.23,0.21)
\lineto(-0.24,0.22)
\lineto(-0.25,0.23)
\lineto(-0.26,0.24)
\lineto(-0.27,0.25)
\lineto(-0.28,0.26)
\lineto(-0.28,0.27)
\lineto(-0.29,0.28)
\lineto(-0.3,0.29)
\lineto(-0.31,0.3)
\lineto(-0.31,0.31)
\lineto(-0.32,0.32)
\lineto(-0.33,0.34)
\lineto(-0.34,0.35)
\lineto(-0.34,0.36)
\lineto(-0.35,0.37)
\lineto(-0.36,0.39)
\lineto(-0.36,0.4)
\lineto(-0.37,0.41)
\lineto(-0.38,0.42)
\lineto(-0.38,0.44)
\lineto(-0.39,0.45)
\lineto(-0.39,0.47)
\lineto(-0.4,0.48)
\lineto(-0.4,0.49)
\lineto(-0.41,0.51)
\lineto(-0.41,0.52)
\lineto(-0.42,0.54)
\lineto(-0.42,0.55)
\lineto(-0.42,0.57)
\lineto(-0.43,0.59)
\lineto(-0.43,0.6)
\lineto(-0.43,0.62)
\lineto(-0.44,0.64)
\lineto(-0.44,0.65)
\lineto(-0.44,0.67)
\lineto(-0.44,0.69)
\lineto(-0.44,0.7)
\lineto(-0.44,0.72)
\lineto(-0.44,0.74)
\lineto(-0.44,0.76)
\lineto(-0.44,0.79)
\lineto(-0.43,0.81)
\lineto(-0.43,0.84)
\lineto(-0.42,0.87)
\lineto(-0.41,0.91)
\lineto(-0.4,0.95)
\lineto(-0.38,0.99)
\lineto(-0.36,1.04)
\lineto(-0.36,1.04)
\lineto(-0.36,1.04)
}
\pscustom[linecolor=ccqqqq]{\moveto(0.22,1.56)
\lineto(0.22,1.56)
\lineto(0.27,1.58)
\lineto(0.33,1.61)
\lineto(0.39,1.63)
\lineto(0.45,1.66)
\lineto(0.52,1.68)
\lineto(0.59,1.7)
\lineto(0.67,1.72)
\lineto(0.75,1.74)
\lineto(0.84,1.76)
\lineto(0.94,1.78)
\lineto(0.98,1.78)
\lineto(1.04,1.79)
\lineto(1.09,1.8)
\lineto(1.14,1.8)
\lineto(1.2,1.8)
\lineto(1.25,1.81)
\lineto(1.31,1.81)
\lineto(1.37,1.81)
\lineto(1.43,1.81)
\lineto(1.49,1.81)
\lineto(1.56,1.81)
\lineto(1.62,1.8)
\lineto(1.68,1.8)
\lineto(1.75,1.79)
\lineto(1.82,1.78)
\lineto(1.88,1.77)
\lineto(1.95,1.76)
\lineto(2.02,1.74)
\lineto(2.09,1.73)
\lineto(2.16,1.71)
\lineto(2.22,1.69)
\lineto(2.29,1.67)
\lineto(2.36,1.64)
\lineto(2.42,1.62)
\lineto(2.49,1.59)
\lineto(2.55,1.56)
\lineto(2.61,1.53)
\lineto(2.67,1.49)
\lineto(2.73,1.46)
\lineto(2.78,1.42)
\lineto(2.84,1.38)
\lineto(2.89,1.34)
\lineto(2.98,1.26)
\lineto(3.05,1.16)
\lineto(3.12,1.07)
\lineto(3.16,0.97)
\lineto(3.19,0.87)
\lineto(3.21,0.78)
\lineto(3.21,0.68)
\lineto(3.2,0.58)
\lineto(3.18,0.49)
\lineto(3.14,0.4)
\lineto(3.09,0.32)
\lineto(3.04,0.24)
\lineto(2.97,0.17)
\lineto(2.9,0.1)
\lineto(2.83,0.04)
\lineto(2.75,-0.02)
\lineto(2.67,-0.07)
\lineto(2.59,-0.11)
\lineto(2.51,-0.15)
\lineto(2.49,-0.16)
\lineto(2.49,-0.16)
\lineto(2.49,-0.16)
\lineto(2.49,-0.16)
\lineto(2.49,-0.16)
\lineto(2.49,-0.16)
\lineto(2.49,-0.16)
\lineto(2.49,-0.16)
\lineto(2.49,-0.16)
\moveto(0.11,1.5)
\lineto(0.11,1.5)
\lineto(0.11,1.5)
\lineto(0.11,1.5)
\lineto(0.11,1.5)
\lineto(0.11,1.5)
\lineto(0.11,1.5)
\lineto(0.11,1.5)
\lineto(0.11,1.5)
\lineto(0.11,1.5)
\lineto(0.11,1.5)
\lineto(0.11,1.5)
\lineto(0.11,1.5)
\lineto(0.11,1.5)
\lineto(0.11,1.5)
\lineto(0.11,1.5)
\lineto(0.12,1.5)
\lineto(0.12,1.51)
\lineto(0.14,1.51)
\lineto(0.17,1.53)
\lineto(0.22,1.56)
\lineto(0.22,1.56)
}
\rput[tl](-0.96,2.86){$l_1:Mx+Ny+P=0$}
\rput[tl](3.22,1.52){$\psi(l_1)$}
\psplot[linecolor=qqttzz]{-6.06}{5.48}{(--16.4--4.92*x)/3.44}
\pscustom[linecolor=ccqqqq]{\moveto(-1.94,0.14)
\lineto(-1.95,0.19)
\lineto(-1.95,0.25)
\lineto(-1.95,0.3)
\lineto(-1.93,0.36)
\lineto(-1.91,0.41)
\lineto(-1.88,0.46)
\lineto(-1.8,0.56)
\lineto(-1.71,0.64)
\lineto(-1.65,0.67)
\lineto(-1.6,0.7)
\lineto(-1.54,0.73)
\lineto(-1.48,0.75)
\lineto(-1.42,0.77)
\lineto(-1.37,0.79)
\lineto(-1.31,0.8)
\lineto(-1.26,0.82)
\lineto(-1.2,0.83)
\lineto(-1.15,0.83)
\lineto(-1.05,0.84)
\lineto(-0.96,0.85)
\lineto(-0.87,0.85)
\lineto(-0.8,0.84)
\lineto(-0.73,0.83)
\lineto(-0.67,0.82)
\lineto(-0.61,0.81)
\lineto(-0.56,0.8)
\lineto(-0.52,0.79)
\lineto(-0.48,0.78)
\lineto(-0.44,0.77)
\lineto(-0.4,0.76)
\lineto(-0.37,0.74)
\lineto(-0.34,0.73)
\lineto(-0.31,0.71)
\lineto(-0.28,0.7)
\lineto(-0.25,0.69)
\lineto(-0.22,0.67)
\lineto(-0.2,0.65)
\lineto(-0.17,0.64)
\lineto(-0.15,0.62)
\lineto(-0.13,0.61)
\lineto(-0.11,0.59)
\lineto(-0.09,0.57)
\lineto(-0.07,0.56)
\lineto(-0.06,0.54)
\lineto(-0.04,0.53)
\lineto(-0.03,0.51)
\lineto(-0.02,0.49)
\lineto(0,0.48)
\lineto(0.01,0.46)
\lineto(0.02,0.45)
\lineto(0.03,0.43)
\lineto(0.03,0.42)
\lineto(0.04,0.4)
\lineto(0.05,0.39)
\lineto(0.05,0.37)
\lineto(0.06,0.36)
\lineto(0.06,0.34)
\lineto(0.07,0.33)
\lineto(0.07,0.32)
\lineto(0.07,0.3)
\lineto(0.08,0.29)
\lineto(0.08,0.28)
\lineto(0.08,0.27)
\lineto(0.08,0.25)
\lineto(0.08,0.24)
\lineto(0.08,0.23)
\lineto(0.08,0.22)
\lineto(0.08,0.21)
\lineto(0.08,0.2)
\lineto(0.08,0.19)
\lineto(0.07,0.18)
\lineto(0.07,0.17)
\lineto(0.07,0.16)
\lineto(0.07,0.15)
\lineto(0.07,0.14)
\lineto(0.06,0.13)
\lineto(0.06,0.12)
\lineto(0.06,0.11)
\lineto(0.05,0.1)
\lineto(0.05,0.09)
\lineto(0.05,0.08)
\lineto(0.04,0.08)
\lineto(0.04,0.07)
\lineto(0.04,0.06)
\lineto(0.03,0.05)
\lineto(0.03,0.05)
\lineto(0.02,0.04)
\lineto(0.02,0.03)
\lineto(0.02,0.03)
\lineto(0.01,0.02)
\lineto(0.01,0.01)
\lineto(0,0.01)
\lineto(0,0)
\lineto(0,0)
\lineto(0,0)
\lineto(0,0)
\lineto(0,0)
\lineto(0,0)
\lineto(0,0)
\lineto(0,0)
\lineto(0,0)
\lineto(0,0)
\lineto(0,0)
\lineto(0,0)
\lineto(0,0)
\lineto(0,0)
\lineto(0,0)
\lineto(0,0)
\lineto(0,0)
\lineto(0,0)
\lineto(0,0)
\lineto(0,0)
\lineto(0,0)
\lineto(0,0)
\lineto(0,0)
\lineto(0,0)
\lineto(0,0)
\lineto(0,0)
\lineto(0,0)
\lineto(0,0)
\lineto(0,0)
\lineto(0,0)
\lineto(0,0)
\lineto(-0.01,-0.01)
\lineto(-0.01,-0.01)
\lineto(-0.02,-0.02)
\lineto(-0.02,-0.03)
\lineto(-0.03,-0.04)
\lineto(-0.03,-0.04)
\lineto(-0.04,-0.05)
\lineto(-0.05,-0.06)
\lineto(-0.06,-0.07)
\lineto(-0.07,-0.08)
\lineto(-0.08,-0.09)
\lineto(-0.09,-0.1)
\lineto(-0.1,-0.11)
\lineto(-0.11,-0.12)
\lineto(-0.12,-0.13)
\lineto(-0.14,-0.14)
\lineto(-0.16,-0.15)
\lineto(-0.18,-0.17)
\lineto(-0.2,-0.18)
\lineto(-0.22,-0.2)
\lineto(-0.24,-0.21)
\lineto(-0.27,-0.23)
\lineto(-0.3,-0.24)
\lineto(-0.34,-0.26)
\lineto(-0.38,-0.27)
\lineto(-0.42,-0.29)
\lineto(-0.47,-0.31)
\lineto(-0.53,-0.32)
\lineto(-0.59,-0.34)
\lineto(-0.66,-0.35)
\lineto(-0.73,-0.36)
\lineto(-0.82,-0.37)
\lineto(-0.92,-0.37)
\lineto(-0.97,-0.37)
\lineto(-1.02,-0.37)
\lineto(-1.08,-0.37)
\lineto(-1.14,-0.36)
\lineto(-1.2,-0.35)
\lineto(-1.26,-0.34)
\lineto(-1.32,-0.33)
\lineto(-1.39,-0.31)
\lineto(-1.45,-0.29)
\lineto(-1.51,-0.27)
\lineto(-1.58,-0.24)
\lineto(-1.64,-0.21)
\lineto(-1.7,-0.17)
\lineto(-1.75,-0.13)
\lineto(-1.84,-0.04)
\lineto(-1.88,0.01)
\lineto(-1.91,0.06)
\lineto(-1.94,0.14)
\lineto(-1.94,0.14)
}
\rput[tl](-7.5,1.68){$l_2:My-Nx+D=0$}
\rput[tl](-1.84,-0.4){$\psi(l_2)$}
\begin{scriptsize}
\psdots[dotstyle=*,linecolor=blue](0,0)
\rput[bl](-0.28,-0.4){\blue{$O$}}
\psdots[dotstyle=*,linecolor=qqttzz](-1.5,2.62)
\rput[bl](-1.6,2.9){\qqttzz{$P$}}
\psdots[dotstyle=*,linecolor=xdxdff](-0.44,0.77)
\rput[bl](-0.36,0.88){\xdxdff{$P'$}}
\end{scriptsize}
\end{pspicture*}
	\end{center}
	\caption{Inversion in an Ellipse of Perpendicular Lines.}
	\label{fig:RectaPerpenInvElip}
\end{figure}
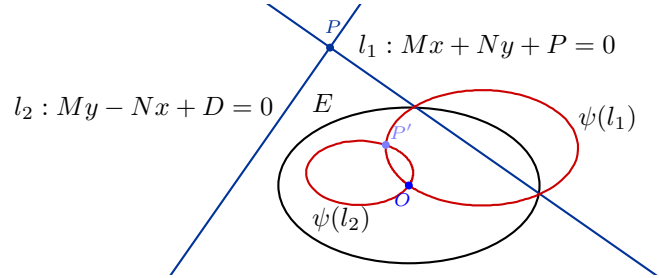

By Theorem  \ref{invrelipse}, $\psi(l_1)=l'_1$ and $\psi(l_2)=l'_2$ are ellipses pass through $O$ and their equations are
\begin{align*}
\frac{x^2}{a^2}+\frac{y^2}{b^2}+\frac{M}{P}x+\frac{N}{P}y&=0,\\
\frac{x^2}{a^2}+\frac{y^2}{b^2}-\frac{N}{D}x+\frac{M}{D}y&=0.
\end{align*}
The equations of the  tangent lines to these ellipses at  $O$, are:
\begin{align*}
\frac{Ma^2b^2}{2}x+\frac{Na^2b^2}{2}y&=0,\\
-\frac{Na^2b^2}{2}x+\frac{Ma^2b^2}{2}y&=0.
\end{align*}
Simplifying
\begin{align*}
Mx + Ny &=0,\\
-Nx + My &=0.
\end{align*}
Therefore  the lines are perpendicular and hence the ellipses are orthogonal.\\

\textit{ii.} It is clear by Theorem \ref{invrelipse}.\\

\textit{iii.} It is similar to the part 1.
\end{proof}

\begin{corollary}
The inversion in an ellipse of a system of concurrent lines for a point $H$, distinct of the  center of inversion is a set of coaxal system of  circles with two common points  $H'$ and the center of inversion, see Figure \ref{fig:RectasConcuInvElip}.
\end{corollary}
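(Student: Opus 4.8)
The plan is to describe, in Cartesian coordinates, the one‑parameter family of conics obtained by inverting the pencil of lines through $H$, and to recognize it as a linear family (pencil) of ellipses having $O$ and $H'$ as base points — the elliptic analogue of a coaxal system of circles.

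First I would fix coordinates so that $E:\frac{x^2}{a^2}+\frac{y^2}{b^2}=1$ with $O$ the origin, and write $H=(h_1,h_2)\neq O$. A line through $H$ has an equation $Mx+Ny+P=0$ with $P=-(Mh_1+Nh_2)$; it passes through $O$ exactly when $P=0$, i.e.\ it is the single line $h_2x-h_1y=0$. For the lines with $P\neq 0$, Theorem~\ref{invrelipse} gives that the elliptic inverse is the ellipse $\frac{x^2}{a^2}+\frac{y^2}{b^2}+\frac{M}{P}x+\frac{N}{P}y=0$. Substituting $P=-(Mh_1+Nh_2)$ and writing $Q=\dfrac{x^2}{a^2}+\dfrac{y^2}{b^2}$, this becomes $(Mh_1+Nh_2)Q-(Mx+Ny)=0$, that is,
\[
M\bigl(h_1Q-x\bigr)+N\bigl(h_2Q-y\bigr)=0 .
\]
Hence, setting $S_1:=h_1Q-x$ and $S_2:=h_2Q-y$, every image ellipse is a member of the pencil $\lambda S_1+\mu S_2=0$, and conversely every $[\lambda:\mu]$ is realized by the line with $[M:N]=[\lambda:\mu]$. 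The degenerate member $h_1S_2-h_2S_1=h_2x-h_1y$ of this pencil is precisely the line $OH$, which is its own elliptic inverse by Theorem~\ref{invrelipse}(i); so the image of the whole concurrent system is exactly the pencil $\{\lambda S_1+\mu S_2=0\}$.

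Next I would locate the common points of the pencil, i.e.\ the simultaneous solutions of $S_1=0$ and $S_2=0$. The origin $O$ works trivially since $S_1(O)=S_2(O)=0$. For $H'=\psi(H)$ one may either invoke the fact that $H$ lies on every line of the system, so $\psi(H)=H'$ lies on every image curve, or verify it directly with the coordinates of Theorem~\ref{coordenadaselipse}: writing $H'=\bigl(\frac{a^2b^2h_1}{D},\frac{a^2b^2h_2}{D}\bigr)$ with $D=b^2h_1^2+a^2h_2^2$, a short computation gives $Q(H')=\frac{a^2b^2}{D}$, whence $S_1(H')=h_1\cdot\frac{a^2b^2}{D}-\frac{a^2b^2h_1}{D}=0$ and likewise $S_2(H')=0$. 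Thus every member of the family passes through both $O$ and $H'$, which is the assertion.

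There is no single hard step here; the content is just the algebraic identity that turns the inverted lines into the pencil $MS_1+NS_2=0$, and the only points needing care are the bookkeeping of the degenerate member (the line $OH$, which accounts for the concurrent line through $O$) and the edge cases $M=0$ or $N=0$ in the parametrization. I would close with the remark that this genuinely generalizes coaxal circles: when $a=b$ the pencil $\lambda S_1+\mu S_2=0$ is a coaxal system of circles through $O$ and $H'$ in the classical sense, while for $a\neq b$ its members are conics through $O$, $H'$ and the two (imaginary) points in which $E$ meets the line at infinity — the natural ``coaxal'' condition for ellipses homothetic to $E$.
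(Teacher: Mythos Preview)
Your argument is correct and in fact considerably more detailed than what the paper offers: the paper states this corollary without any proof, leaving it as an immediate consequence of Theorem~\ref{invrelipse} together with the figure. Your reorganization of the image equations into the pencil $M(h_1Q-x)+N(h_2Q-y)=0$ is a clean way to make the ``coaxal'' structure explicit and to verify directly that $O$ and $H'$ are the two base points; the handling of the degenerate member $h_2x-h_1y=0$ (the line $OH$, self-inverse by Theorem~\ref{invrelipse}(i)) is also a nice touch that the paper omits. Note, incidentally, that the word ``circles'' in the corollary's statement is a slip for ``ellipses'' --- your proof correctly treats them as ellipses homothetic to $E$, as Theorem~\ref{invrelipse}(ii) requires.
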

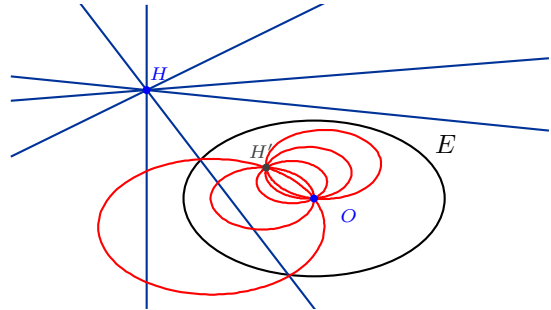
\begin{figure}[h]
	\begin{center}
\newrgbcolor{qqttzz}{0 0.2 0.6}
\psset{xunit=0.7cm,yunit=0.7cm,algebraic=true,dotstyle=o,dotsize=3pt 0,linewidth=0.8pt,arrowsize=3pt 2,arrowinset=0.25}
\begin{pspicture*}(-5.76,-2.1)(4.46,3.68)
\rput{0}(0,0){\psellipse(0,0)(2.5,1.5)}
\rput[tl](2.3,1.2){$E$}
\psplot[linecolor=qqttzz]{-5.76}{4.46}{(-7.38-1*x)/-2.04}
\psplot[linecolor=qqttzz]{-5.76}{4.46}{(--11.7--0.4*x)/5.06}
\psline[linecolor=qqttzz](-3.18,-2.1)(-3.18,3.68)
\psplot[linecolor=qqttzz]{-5.76}{4.46}{(--4.93--3.08*x)/-2.36}
\psplot[linecolor=qqttzz]{-5.76}{4.46}{(-8.28--0.48*x)/-4.76}
\pscustom[linecolor=red]{\moveto(0.2,-0.53)
\lineto(0.19,-0.6)
\lineto(0.18,-0.67)
\lineto(0.17,-0.76)
\lineto(0.13,-0.86)
\lineto(0.1,-0.91)
\lineto(0.07,-0.97)
\lineto(0.03,-1.03)
\lineto(-0.02,-1.1)
\lineto(-0.08,-1.17)
\lineto(-0.16,-1.25)
\lineto(-0.26,-1.33)
\lineto(-0.31,-1.37)
\lineto(-0.37,-1.41)
\lineto(-0.44,-1.45)
\lineto(-0.46,-1.46)
\lineto(-0.47,-1.47)
\lineto(-0.47,-1.47)
\lineto(-0.47,-1.47)
\lineto(-0.47,-1.47)
\lineto(-0.47,-1.47)
\lineto(-0.47,-1.47)
\lineto(-0.47,-1.47)
\lineto(-0.47,-1.47)
\moveto(-2.17,0.74)
\lineto(-2.17,0.74)
\lineto(-2.17,0.74)
\lineto(-2.17,0.74)
\lineto(-2.17,0.74)
\lineto(-2.17,0.74)
\lineto(-2.17,0.74)
\lineto(-2.17,0.74)
\lineto(-2.17,0.74)
\lineto(-2.17,0.74)
\lineto(-2.17,0.74)
\lineto(-2.17,0.74)
\lineto(-2.17,0.74)
\lineto(-2.17,0.74)
\lineto(-2.17,0.74)
\lineto(-2.16,0.74)
\lineto(-2.16,0.74)
\lineto(-2.14,0.75)
\lineto(-2.11,0.75)
\lineto(-2.06,0.75)
\lineto(-2,0.75)
\lineto(-1.95,0.75)
\lineto(-1.86,0.75)
\lineto(-1.8,0.75)
\lineto(-1.74,0.74)
\lineto(-1.68,0.74)
\lineto(-1.63,0.74)
\lineto(-1.54,0.73)
\lineto(-1.46,0.72)
\lineto(-1.38,0.7)
\lineto(-1.32,0.69)
\lineto(-1.26,0.68)
\lineto(-1.21,0.67)
\lineto(-1.16,0.66)
\lineto(-1.12,0.65)
\lineto(-1.08,0.64)
\lineto(-1.05,0.63)
\lineto(-1.02,0.62)
\lineto(-0.99,0.61)
\lineto(-0.96,0.6)
\lineto(-0.93,0.6)
\lineto(-0.91,0.59)
\lineto(-0.89,0.58)
\lineto(-0.87,0.57)
\lineto(-0.85,0.57)
\lineto(-0.82,0.56)
\lineto(-0.8,0.55)
\lineto(-0.78,0.54)
\lineto(-0.76,0.53)
\lineto(-0.74,0.53)
\lineto(-0.72,0.52)
\lineto(-0.7,0.51)
\lineto(-0.68,0.5)
\lineto(-0.66,0.49)
\lineto(-0.65,0.48)
\lineto(-0.63,0.48)
\lineto(-0.61,0.47)
\lineto(-0.59,0.46)
\lineto(-0.57,0.45)
\lineto(-0.55,0.44)
\lineto(-0.54,0.43)
\lineto(-0.52,0.42)
\lineto(-0.5,0.41)
\lineto(-0.49,0.4)
\lineto(-0.47,0.4)
\lineto(-0.45,0.39)
\lineto(-0.44,0.38)
\lineto(-0.42,0.37)
\lineto(-0.41,0.36)
\lineto(-0.39,0.35)
\lineto(-0.38,0.34)
\lineto(-0.36,0.33)
\lineto(-0.35,0.32)
\lineto(-0.33,0.31)
\lineto(-0.32,0.3)
\lineto(-0.31,0.29)
\lineto(-0.29,0.28)
\lineto(-0.28,0.27)
\lineto(-0.27,0.26)
\lineto(-0.26,0.25)
\lineto(-0.24,0.24)
\lineto(-0.23,0.23)
\lineto(-0.22,0.22)
\lineto(-0.21,0.21)
\lineto(-0.2,0.2)
\lineto(-0.18,0.19)
\lineto(-0.17,0.18)
\lineto(-0.16,0.17)
\lineto(-0.15,0.16)
\lineto(-0.14,0.16)
\lineto(-0.13,0.15)
\lineto(-0.12,0.14)
\lineto(-0.11,0.13)
\lineto(-0.1,0.12)
\lineto(-0.09,0.11)
\lineto(-0.08,0.1)
\lineto(-0.07,0.09)
\lineto(-0.06,0.08)
\lineto(-0.06,0.07)
\lineto(-0.05,0.06)
\lineto(-0.04,0.05)
\lineto(-0.03,0.04)
\lineto(-0.02,0.03)
\lineto(-0.02,0.02)
\lineto(-0.01,0.01)
\lineto(0,0)
\lineto(0,0)
\lineto(0,0)
\lineto(0,0)
\lineto(0,0)
\lineto(0,0)
\lineto(0,0)
\lineto(0,0)
\lineto(0,0)
\lineto(0,0)
\lineto(0,0)
\lineto(0,0)
\lineto(0,0)
\lineto(0,0)
\lineto(0,0)
\lineto(0,0)
\lineto(0,0)
\lineto(0,0)
\lineto(0,0)
\lineto(0,0)
\lineto(0,0)
\lineto(0,0)
\lineto(0,0)
\lineto(0,0)
\lineto(0,0)
\lineto(0,0)
\lineto(0,0)
\lineto(0,0)
\lineto(0,-0.01)
\lineto(0.01,-0.01)
\lineto(0.02,-0.02)
\lineto(0.03,-0.03)
\lineto(0.03,-0.05)
\lineto(0.04,-0.06)
\lineto(0.05,-0.07)
\lineto(0.06,-0.09)
\lineto(0.07,-0.1)
\lineto(0.08,-0.12)
\lineto(0.09,-0.14)
\lineto(0.1,-0.16)
\lineto(0.11,-0.18)
\lineto(0.12,-0.21)
\lineto(0.14,-0.23)
\lineto(0.15,-0.26)
\lineto(0.16,-0.29)
\lineto(0.17,-0.33)
\lineto(0.18,-0.37)
\lineto(0.19,-0.42)
\lineto(0.19,-0.47)
\lineto(0.2,-0.52)
\lineto(0.2,-0.53)
}
\pscustom[linecolor=red]{\moveto(-0.69,-1.58)
\lineto(-0.69,-1.58)
\lineto(-0.78,-1.62)
\lineto(-0.84,-1.64)
\lineto(-0.89,-1.66)
\lineto(-0.95,-1.68)
\lineto(-1.01,-1.69)
\lineto(-1.07,-1.71)
\lineto(-1.13,-1.73)
\lineto(-1.2,-1.74)
\lineto(-1.26,-1.76)
\lineto(-1.33,-1.77)
\lineto(-1.41,-1.78)
\lineto(-1.48,-1.8)
\lineto(-1.56,-1.81)
\lineto(-1.64,-1.81)
\lineto(-1.72,-1.82)
\lineto(-1.81,-1.82)
\lineto(-1.89,-1.83)
\lineto(-1.98,-1.83)
\lineto(-2.07,-1.82)
\lineto(-2.16,-1.82)
\lineto(-2.25,-1.81)
\lineto(-2.35,-1.8)
\lineto(-2.44,-1.79)
\lineto(-2.53,-1.78)
\lineto(-2.63,-1.76)
\lineto(-2.72,-1.74)
\lineto(-2.81,-1.72)
\lineto(-2.9,-1.69)
\lineto(-3,-1.66)
\lineto(-3.08,-1.63)
\lineto(-3.17,-1.6)
\lineto(-3.26,-1.56)
\lineto(-3.34,-1.52)
\lineto(-3.42,-1.48)
\lineto(-3.49,-1.43)
\lineto(-3.57,-1.39)
\lineto(-3.63,-1.34)
\lineto(-3.7,-1.29)
\lineto(-3.76,-1.24)
\lineto(-3.81,-1.18)
\lineto(-3.86,-1.13)
\lineto(-3.91,-1.07)
\lineto(-3.95,-1.01)
\lineto(-3.98,-0.96)
\lineto(-4.01,-0.9)
\lineto(-4.04,-0.84)
\lineto(-4.06,-0.78)
\lineto(-4.08,-0.72)
\lineto(-4.09,-0.66)
\lineto(-4.1,-0.61)
\lineto(-4.1,-0.55)
\lineto(-4.1,-0.49)
\lineto(-4.09,-0.44)
\lineto(-4.08,-0.39)
\lineto(-4.07,-0.33)
\lineto(-4.06,-0.28)
\lineto(-4.04,-0.23)
\lineto(-3.99,-0.14)
\lineto(-3.94,-0.05)
\lineto(-3.88,0.03)
\lineto(-3.81,0.11)
\lineto(-3.74,0.18)
\lineto(-3.66,0.24)
\lineto(-3.58,0.3)
\lineto(-3.5,0.35)
\lineto(-3.42,0.4)
\lineto(-3.34,0.44)
\lineto(-3.26,0.48)
\lineto(-3.18,0.52)
\lineto(-3.11,0.55)
\lineto(-3.03,0.58)
\lineto(-2.96,0.6)
\lineto(-2.89,0.62)
\lineto(-2.82,0.64)
\lineto(-2.75,0.66)
\lineto(-2.69,0.67)
\lineto(-2.62,0.69)
\lineto(-2.56,0.7)
\lineto(-2.5,0.71)
\lineto(-2.45,0.72)
\lineto(-2.39,0.72)
\lineto(-2.34,0.73)
\lineto(-2.24,0.74)
\lineto(-2.2,0.74)
\lineto(-2.17,0.74)
\lineto(-2.17,0.74)
\lineto(-2.17,0.74)
\lineto(-2.17,0.74)
\lineto(-2.17,0.74)
\lineto(-2.17,0.74)
\lineto(-2.17,0.74)
\moveto(-0.47,-1.47)
\lineto(-0.47,-1.47)
\lineto(-0.47,-1.47)
\lineto(-0.47,-1.47)
\lineto(-0.47,-1.47)
\lineto(-0.47,-1.47)
\lineto(-0.47,-1.47)
\lineto(-0.47,-1.47)
\lineto(-0.47,-1.47)
\lineto(-0.47,-1.47)
\lineto(-0.47,-1.47)
\lineto(-0.47,-1.47)
\lineto(-0.47,-1.47)
\lineto(-0.48,-1.47)
\lineto(-0.48,-1.48)
\lineto(-0.49,-1.48)
\lineto(-0.5,-1.49)
\lineto(-0.52,-1.5)
\lineto(-0.58,-1.53)
\lineto(-0.64,-1.56)
\lineto(-0.69,-1.58)
\lineto(-0.69,-1.58)
}
\pscustom[linecolor=red]{\moveto(-1.51,0.5)
\lineto(-1.57,0.47)
\lineto(-1.63,0.44)
\lineto(-1.69,0.41)
\lineto(-1.74,0.37)
\lineto(-1.8,0.33)
\lineto(-1.85,0.27)
\lineto(-1.9,0.21)
\lineto(-1.93,0.15)
\lineto(-1.96,0.07)
\lineto(-1.97,-0.01)
\lineto(-1.95,-0.09)
\lineto(-1.92,-0.18)
\lineto(-1.86,-0.26)
\lineto(-1.78,-0.34)
\lineto(-1.74,-0.38)
\lineto(-1.68,-0.41)
\lineto(-1.63,-0.45)
\lineto(-1.56,-0.48)
\lineto(-1.5,-0.5)
\lineto(-1.43,-0.52)
\lineto(-1.37,-0.54)
\lineto(-1.3,-0.56)
\lineto(-1.23,-0.57)
\lineto(-1.16,-0.58)
\lineto(-1.09,-0.59)
\lineto(-1.02,-0.59)
\lineto(-0.96,-0.59)
\lineto(-0.89,-0.59)
\lineto(-0.83,-0.58)
\lineto(-0.77,-0.58)
\lineto(-0.72,-0.57)
\lineto(-0.67,-0.56)
\lineto(-0.57,-0.53)
\lineto(-0.48,-0.51)
\lineto(-0.41,-0.48)
\lineto(-0.34,-0.45)
\lineto(-0.29,-0.42)
\lineto(-0.24,-0.38)
\lineto(-0.2,-0.35)
\lineto(-0.16,-0.32)
\lineto(-0.13,-0.3)
\lineto(-0.11,-0.27)
\lineto(-0.09,-0.24)
\lineto(-0.07,-0.22)
\lineto(-0.05,-0.19)
\lineto(-0.04,-0.17)
\lineto(-0.03,-0.15)
\lineto(-0.02,-0.13)
\lineto(-0.02,-0.11)
\lineto(-0.01,-0.09)
\lineto(-0.01,-0.08)
\lineto(-0.01,-0.06)
\lineto(0,-0.05)
\lineto(0,-0.03)
\lineto(0,-0.02)
\lineto(0,-0.01)
\lineto(0,0)
\lineto(0,0)
\lineto(0,0)
\lineto(0,0)
\lineto(0,0)
\lineto(0,0)
\lineto(0,0)
\lineto(0,0)
\lineto(0,0)
\lineto(0,0)
\lineto(0,0)
\lineto(0,0)
\lineto(0,0)
\lineto(0,0)
\lineto(0,0)
\lineto(0,0)
\lineto(0,0)
\lineto(0,0)
\lineto(0,0)
\lineto(0,0)
\lineto(0,0)
\lineto(0,0)
\lineto(0,0)
\lineto(0,0)
\lineto(0,0)
\lineto(0,0)
\lineto(0,0)
\lineto(0,0)
\lineto(0,0)
\lineto(0,0)
\lineto(0,0.01)
\lineto(0,0.02)
\lineto(0,0.03)
\lineto(0,0.04)
\lineto(0,0.05)
\lineto(-0.01,0.07)
\lineto(-0.01,0.08)
\lineto(-0.01,0.09)
\lineto(-0.02,0.1)
\lineto(-0.02,0.12)
\lineto(-0.02,0.13)
\lineto(-0.03,0.14)
\lineto(-0.03,0.15)
\lineto(-0.04,0.17)
\lineto(-0.05,0.18)
\lineto(-0.05,0.19)
\lineto(-0.06,0.2)
\lineto(-0.07,0.22)
\lineto(-0.08,0.23)
\lineto(-0.09,0.24)
\lineto(-0.1,0.26)
\lineto(-0.11,0.27)
\lineto(-0.12,0.28)
\lineto(-0.13,0.29)
\lineto(-0.14,0.3)
\lineto(-0.15,0.32)
\lineto(-0.17,0.33)
\lineto(-0.18,0.34)
\lineto(-0.19,0.35)
\lineto(-0.21,0.36)
\lineto(-0.22,0.37)
\lineto(-0.24,0.39)
\lineto(-0.26,0.4)
\lineto(-0.27,0.41)
\lineto(-0.29,0.42)
\lineto(-0.31,0.43)
\lineto(-0.32,0.44)
\lineto(-0.34,0.45)
\lineto(-0.36,0.46)
\lineto(-0.38,0.47)
\lineto(-0.4,0.47)
\lineto(-0.42,0.48)
\lineto(-0.44,0.49)
\lineto(-0.46,0.5)
\lineto(-0.48,0.51)
\lineto(-0.5,0.51)
\lineto(-0.52,0.52)
\lineto(-0.54,0.53)
\lineto(-0.56,0.53)
\lineto(-0.58,0.54)
\lineto(-0.61,0.54)
\lineto(-0.63,0.55)
\lineto(-0.65,0.55)
\lineto(-0.67,0.56)
\lineto(-0.69,0.56)
\lineto(-0.72,0.57)
\lineto(-0.74,0.57)
\lineto(-0.76,0.57)
\lineto(-0.78,0.58)
\lineto(-0.81,0.58)
\lineto(-0.83,0.58)
\lineto(-0.85,0.58)
\lineto(-0.87,0.59)
\lineto(-0.89,0.59)
\lineto(-0.92,0.59)
\lineto(-0.94,0.59)
\lineto(-0.96,0.59)
\lineto(-0.99,0.59)
\lineto(-1.02,0.59)
\lineto(-1.04,0.59)
\lineto(-1.07,0.59)
\lineto(-1.11,0.58)
\lineto(-1.14,0.58)
\lineto(-1.18,0.58)
\lineto(-1.22,0.57)
\lineto(-1.26,0.57)
\lineto(-1.3,0.56)
\lineto(-1.35,0.55)
\lineto(-1.4,0.53)
\lineto(-1.45,0.52)
\lineto(-1.51,0.5)
\lineto(-1.51,0.5)
}
\pscustom[linecolor=red]{\moveto(-1,0.11)
\lineto(-0.98,0.09)
\lineto(-0.95,0.07)
\lineto(-0.93,0.05)
\lineto(-0.9,0.03)
\lineto(-0.87,0.01)
\lineto(-0.83,-0.01)
\lineto(-0.8,-0.02)
\lineto(-0.76,-0.04)
\lineto(-0.72,-0.05)
\lineto(-0.68,-0.06)
\lineto(-0.63,-0.07)
\lineto(-0.59,-0.08)
\lineto(-0.54,-0.08)
\lineto(-0.5,-0.09)
\lineto(-0.45,-0.09)
\lineto(-0.41,-0.09)
\lineto(-0.36,-0.09)
\lineto(-0.32,-0.09)
\lineto(-0.27,-0.08)
\lineto(-0.23,-0.07)
\lineto(-0.19,-0.07)
\lineto(-0.15,-0.06)
\lineto(-0.11,-0.05)
\lineto(-0.08,-0.03)
\lineto(-0.05,-0.02)
\lineto(-0.02,-0.01)
\lineto(0,0)
\lineto(0,0)
\lineto(0,0)
\lineto(0,0)
\lineto(0,0)
\lineto(0,0)
\lineto(0,0)
\lineto(0,0)
\lineto(0,0)
\lineto(0,0)
\lineto(0,0)
\lineto(0,0)
\lineto(0,0)
\lineto(0,0)
\lineto(0,0)
\lineto(0,0)
\lineto(0,0)
\lineto(0,0)
\lineto(0,0)
\lineto(0,0)
\lineto(0,0)
\lineto(0,0)
\lineto(0,0)
\lineto(0,0)
\lineto(0,0)
\lineto(0,0)
\lineto(0,0)
\lineto(0,0)
\lineto(0.01,0)
\lineto(0.02,0.01)
\lineto(0.04,0.02)
\lineto(0.07,0.04)
\lineto(0.09,0.06)
\lineto(0.12,0.08)
\lineto(0.14,0.1)
\lineto(0.17,0.12)
\lineto(0.19,0.14)
\lineto(0.2,0.17)
\lineto(0.22,0.2)
\lineto(0.23,0.23)
\lineto(0.24,0.25)
\lineto(0.24,0.28)
\lineto(0.25,0.32)
\lineto(0.24,0.35)
\lineto(0.24,0.38)
\lineto(0.23,0.41)
\lineto(0.21,0.44)
\lineto(0.19,0.47)
\lineto(0.17,0.49)
\lineto(0.15,0.52)
\lineto(0.12,0.54)
\lineto(0.09,0.57)
\lineto(0.06,0.59)
\lineto(0.02,0.61)
\lineto(-0.01,0.63)
\lineto(-0.05,0.64)
\lineto(-0.09,0.66)
\lineto(-0.12,0.67)
\lineto(-0.16,0.68)
\lineto(-0.2,0.69)
\lineto(-0.24,0.7)
\lineto(-0.27,0.7)
\lineto(-0.31,0.71)
\lineto(-0.34,0.71)
\lineto(-0.38,0.71)
\lineto(-0.41,0.71)
\lineto(-0.44,0.71)
\lineto(-0.47,0.71)
\lineto(-0.5,0.71)
\lineto(-0.53,0.71)
\lineto(-0.56,0.7)
\lineto(-0.58,0.7)
\lineto(-0.61,0.7)
\lineto(-0.63,0.69)
\lineto(-0.65,0.69)
\lineto(-0.67,0.68)
\lineto(-0.69,0.68)
\lineto(-0.71,0.67)
\lineto(-0.73,0.67)
\lineto(-0.75,0.66)
\lineto(-0.76,0.66)
\lineto(-0.78,0.65)
\lineto(-0.79,0.65)
\lineto(-0.81,0.64)
\lineto(-0.82,0.63)
\lineto(-0.83,0.63)
\lineto(-0.84,0.62)
\lineto(-0.85,0.62)
\lineto(-0.86,0.61)
\lineto(-0.87,0.61)
\lineto(-0.88,0.6)
\lineto(-0.89,0.6)
\lineto(-0.9,0.59)
\lineto(-0.91,0.59)
\lineto(-0.92,0.58)
\lineto(-0.93,0.58)
\lineto(-0.94,0.57)
\lineto(-0.94,0.56)
\lineto(-0.95,0.56)
\lineto(-0.96,0.55)
\lineto(-0.97,0.54)
\lineto(-0.98,0.54)
\lineto(-0.99,0.53)
\lineto(-0.99,0.52)
\lineto(-1,0.51)
\lineto(-1.01,0.5)
\lineto(-1.02,0.49)
\lineto(-1.03,0.48)
\lineto(-1.04,0.47)
\lineto(-1.04,0.46)
\lineto(-1.05,0.45)
\lineto(-1.06,0.44)
\lineto(-1.06,0.43)
\lineto(-1.07,0.41)
\lineto(-1.08,0.4)
\lineto(-1.08,0.39)
\lineto(-1.08,0.37)
\lineto(-1.09,0.36)
\lineto(-1.09,0.34)
\lineto(-1.09,0.32)
\lineto(-1.09,0.31)
\lineto(-1.09,0.29)
\lineto(-1.09,0.27)
\lineto(-1.08,0.25)
\lineto(-1.08,0.23)
\lineto(-1.07,0.21)
\lineto(-1.06,0.19)
\lineto(-1.05,0.17)
\lineto(-1.04,0.15)
\lineto(-1.02,0.13)
\lineto(-1,0.11)
\lineto(-1,0.11)
}
\pscustom[linecolor=red]{\moveto(-0.28,0.97)
\lineto(-0.2,0.97)
\lineto(-0.12,0.98)
\lineto(-0.04,0.98)
\lineto(0.05,0.97)
\lineto(0.14,0.95)
\lineto(0.23,0.93)
\lineto(0.32,0.91)
\lineto(0.4,0.87)
\lineto(0.47,0.83)
\lineto(0.54,0.79)
\lineto(0.59,0.74)
\lineto(0.64,0.69)
\lineto(0.67,0.64)
\lineto(0.7,0.58)
\lineto(0.71,0.53)
\lineto(0.71,0.48)
\lineto(0.71,0.43)
\lineto(0.69,0.38)
\lineto(0.67,0.34)
\lineto(0.65,0.3)
\lineto(0.62,0.27)
\lineto(0.59,0.23)
\lineto(0.56,0.2)
\lineto(0.53,0.18)
\lineto(0.49,0.15)
\lineto(0.46,0.13)
\lineto(0.42,0.11)
\lineto(0.39,0.1)
\lineto(0.35,0.08)
\lineto(0.32,0.07)
\lineto(0.29,0.06)
\lineto(0.26,0.05)
\lineto(0.23,0.04)
\lineto(0.2,0.03)
\lineto(0.17,0.03)
\lineto(0.15,0.02)
\lineto(0.12,0.02)
\lineto(0.1,0.01)
\lineto(0.08,0.01)
\lineto(0.05,0.01)
\lineto(0.03,0)
\lineto(0.01,0)
\lineto(0,0)
\lineto(0,0)
\lineto(0,0)
\lineto(0,0)
\lineto(0,0)
\lineto(0,0)
\lineto(0,0)
\lineto(0,0)
\lineto(0,0)
\lineto(0,0)
\lineto(0,0)
\lineto(0,0)
\lineto(0,0)
\lineto(0,0)
\lineto(0,0)
\lineto(0,0)
\lineto(0,0)
\lineto(0,0)
\lineto(0,0)
\lineto(0,0)
\lineto(0,0)
\lineto(0,0)
\lineto(0,0)
\lineto(0,0)
\lineto(0,0)
\lineto(0,0)
\lineto(0,0)
\lineto(0,0)
\lineto(0,0)
\lineto(0,0)
\lineto(-0.01,0)
\lineto(-0.01,0)
\lineto(-0.03,0)
\lineto(-0.05,0)
\lineto(-0.07,0)
\lineto(-0.09,0)
\lineto(-0.11,0)
\lineto(-0.13,0)
\lineto(-0.15,0)
\lineto(-0.17,0)
\lineto(-0.19,0)
\lineto(-0.21,0)
\lineto(-0.23,0)
\lineto(-0.26,0)
\lineto(-0.28,0.01)
\lineto(-0.3,0.01)
\lineto(-0.32,0.01)
\lineto(-0.35,0.02)
\lineto(-0.37,0.02)
\lineto(-0.39,0.03)
\lineto(-0.41,0.03)
\lineto(-0.44,0.04)
\lineto(-0.46,0.04)
\lineto(-0.48,0.05)
\lineto(-0.5,0.06)
\lineto(-0.52,0.06)
\lineto(-0.55,0.07)
\lineto(-0.57,0.08)
\lineto(-0.59,0.09)
\lineto(-0.61,0.1)
\lineto(-0.63,0.11)
\lineto(-0.65,0.12)
\lineto(-0.67,0.13)
\lineto(-0.69,0.14)
\lineto(-0.71,0.15)
\lineto(-0.72,0.16)
\lineto(-0.74,0.18)
\lineto(-0.76,0.19)
\lineto(-0.77,0.2)
\lineto(-0.79,0.22)
\lineto(-0.8,0.23)
\lineto(-0.82,0.24)
\lineto(-0.83,0.26)
\lineto(-0.84,0.27)
\lineto(-0.85,0.29)
\lineto(-0.86,0.3)
\lineto(-0.87,0.31)
\lineto(-0.88,0.33)
\lineto(-0.89,0.34)
\lineto(-0.9,0.36)
\lineto(-0.9,0.37)
\lineto(-0.91,0.39)
\lineto(-0.91,0.4)
\lineto(-0.92,0.42)
\lineto(-0.92,0.43)
\lineto(-0.92,0.45)
\lineto(-0.92,0.46)
\lineto(-0.93,0.48)
\lineto(-0.92,0.49)
\lineto(-0.92,0.51)
\lineto(-0.92,0.52)
\lineto(-0.92,0.54)
\lineto(-0.92,0.55)
\lineto(-0.91,0.57)
\lineto(-0.91,0.58)
\lineto(-0.91,0.59)
\lineto(-0.9,0.61)
\lineto(-0.89,0.62)
\lineto(-0.89,0.64)
\lineto(-0.88,0.65)
\lineto(-0.87,0.67)
\lineto(-0.85,0.69)
\lineto(-0.84,0.7)
\lineto(-0.82,0.72)
\lineto(-0.81,0.74)
\lineto(-0.79,0.76)
\lineto(-0.76,0.78)
\lineto(-0.73,0.8)
\lineto(-0.7,0.82)
\lineto(-0.67,0.84)
\lineto(-0.63,0.87)
\lineto(-0.58,0.89)
\lineto(-0.53,0.91)
\lineto(-0.48,0.92)
\lineto(-0.42,0.94)
\lineto(-0.35,0.96)
\lineto(-0.28,0.97)
}
\pscustom[linecolor=red]{\moveto(-0.67,0.24)
\lineto(-0.66,0.23)
\lineto(-0.64,0.22)
\lineto(-0.63,0.2)
\lineto(-0.61,0.19)
\lineto(-0.59,0.18)
\lineto(-0.58,0.17)
\lineto(-0.56,0.17)
\lineto(-0.54,0.16)
\lineto(-0.53,0.15)
\lineto(-0.51,0.14)
\lineto(-0.49,0.13)
\lineto(-0.47,0.12)
\lineto(-0.45,0.11)
\lineto(-0.43,0.1)
\lineto(-0.42,0.1)
\lineto(-0.4,0.09)
\lineto(-0.38,0.08)
\lineto(-0.36,0.08)
\lineto(-0.34,0.07)
\lineto(-0.32,0.06)
\lineto(-0.3,0.06)
\lineto(-0.28,0.05)
\lineto(-0.26,0.05)
\lineto(-0.24,0.04)
\lineto(-0.22,0.04)
\lineto(-0.2,0.03)
\lineto(-0.18,0.03)
\lineto(-0.16,0.02)
\lineto(-0.14,0.02)
\lineto(-0.12,0.02)
\lineto(-0.1,0.01)
\lineto(-0.08,0.01)
\lineto(-0.06,0.01)
\lineto(-0.04,0)
\lineto(-0.02,0)
\lineto(-0.01,0)
\lineto(0,0)
\lineto(0,0)
\lineto(0,0)
\lineto(0,0)
\lineto(0,0)
\lineto(0,0)
\lineto(0,0)
\lineto(0,0)
\lineto(0,0)
\lineto(0,0)
\lineto(0,0)
\lineto(0,0)
\lineto(0,0)
\lineto(0,0)
\lineto(0,0)
\lineto(0,0)
\lineto(0,0)
\lineto(0,0)
\lineto(0,0)
\lineto(0,0)
\lineto(0,0)
\lineto(0,0)
\lineto(0,0)
\lineto(0,0)
\lineto(0,0)
\lineto(0,0)
\lineto(0,0)
\lineto(0,0)
\lineto(0,0)
\lineto(0.01,0)
\lineto(0.01,0)
\lineto(0.03,0)
\lineto(0.05,0)
\lineto(0.07,-0.01)
\lineto(0.1,-0.01)
\lineto(0.13,-0.01)
\lineto(0.15,-0.01)
\lineto(0.19,-0.01)
\lineto(0.22,-0.01)
\lineto(0.25,-0.01)
\lineto(0.29,-0.01)
\lineto(0.33,0)
\lineto(0.37,0)
\lineto(0.42,0.01)
\lineto(0.47,0.01)
\lineto(0.52,0.02)
\lineto(0.58,0.04)
\lineto(0.64,0.05)
\lineto(0.7,0.07)
\lineto(0.77,0.09)
\lineto(0.84,0.12)
\lineto(0.91,0.16)
\lineto(0.98,0.2)
\lineto(1.05,0.25)
\lineto(1.12,0.31)
\lineto(1.18,0.39)
\lineto(1.23,0.47)
\lineto(1.26,0.56)
\lineto(1.27,0.61)
\lineto(1.27,0.66)
\lineto(1.27,0.71)
\lineto(1.26,0.77)
\lineto(1.23,0.82)
\lineto(1.21,0.88)
\lineto(1.17,0.93)
\lineto(1.12,0.98)
\lineto(1.07,1.03)
\lineto(1,1.08)
\lineto(0.93,1.12)
\lineto(0.86,1.16)
\lineto(0.78,1.2)
\lineto(0.69,1.23)
\lineto(0.61,1.25)
\lineto(0.52,1.27)
\lineto(0.43,1.29)
\lineto(0.34,1.3)
\lineto(0.25,1.3)
\lineto(0.17,1.3)
\lineto(0.08,1.3)
\lineto(0,1.29)
\lineto(-0.07,1.28)
\lineto(-0.14,1.27)
\lineto(-0.21,1.26)
\lineto(-0.27,1.24)
\lineto(-0.33,1.23)
\lineto(-0.38,1.21)
\lineto(-0.48,1.17)
\lineto(-0.56,1.13)
\lineto(-0.63,1.09)
\lineto(-0.69,1.05)
\lineto(-0.73,1.01)
\lineto(-0.77,0.97)
\lineto(-0.8,0.93)
\lineto(-0.83,0.9)
\lineto(-0.85,0.87)
\lineto(-0.86,0.84)
\lineto(-0.88,0.81)
\lineto(-0.89,0.78)
\lineto(-0.9,0.76)
\lineto(-0.9,0.74)
\lineto(-0.91,0.71)
\lineto(-0.91,0.69)
\lineto(-0.91,0.67)
\lineto(-0.91,0.66)
\lineto(-0.91,0.64)
\lineto(-0.91,0.62)
\lineto(-0.91,0.61)
\lineto(-0.91,0.6)
\lineto(-0.91,0.58)
\lineto(-0.9,0.57)
\lineto(-0.9,0.56)
\lineto(-0.9,0.54)
\lineto(-0.89,0.53)
\lineto(-0.89,0.52)
\lineto(-0.89,0.51)
\lineto(-0.88,0.49)
\lineto(-0.88,0.48)
\lineto(-0.87,0.47)
\lineto(-0.86,0.45)
\lineto(-0.86,0.44)
\lineto(-0.85,0.43)
\lineto(-0.84,0.41)
\lineto(-0.83,0.4)
\lineto(-0.82,0.39)
\lineto(-0.81,0.38)
\lineto(-0.81,0.36)
\lineto(-0.8,0.35)
\lineto(-0.78,0.34)
\lineto(-0.77,0.33)
\lineto(-0.76,0.32)
\lineto(-0.75,0.3)
\lineto(-0.74,0.29)
\lineto(-0.72,0.28)
\lineto(-0.71,0.27)
\lineto(-0.7,0.26)
\lineto(-0.68,0.25)
\lineto(-0.67,0.24)
}
\begin{scriptsize}
\psdots[dotstyle=*,linecolor=blue](0,0)
\rput[bl](0.5,-0.44){\blue{$O$}}
\psdots[dotstyle=*,linecolor=blue](-3.18,2.06)
\rput[bl](-3.12,2.26){\blue{$H$}}
\psdots[dotstyle=*,linecolor=darkgray](-0.91,0.59)
\rput[bl](-1.24,0.76){\darkgray{$H'$}}
\end{scriptsize}
\end{pspicture*}
	\end{center}
	\caption{Inversion in an Ellipse of a System of Concurrent Lines.}
	\label{fig:RectasConcuInvElip}
\end{figure}

\begin{corollary}
The inversion in an ellipse of a system of parallel lines which does not pass through  of the center of inversion is a set of tangent ellipses at the center of inversion, see Figure \ref{fig:RectasParaleInvElipse}.
\end{corollary}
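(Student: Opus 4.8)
The plan is to parametrize the family of parallel lines, apply Theorem~\ref{invrelipse}(ii) to each member, and then verify that all the resulting ellipses carry one and the same tangent line at $O$. Place the ellipse of inversion as $E:\frac{x^2}{a^2}+\frac{y^2}{b^2}=1$ with center $O$ at the origin. Since the lines of the system are parallel they share a common direction, so the system can be written as $l_P:Mx+Ny+P=0$ with $(M,N)\neq(0,0)$ fixed and $P$ ranging over $\mathbb{R}\setminus\{0\}$ (the value $P=0$ is excluded precisely because that line would pass through $O$). By Theorem~\ref{invrelipse}(ii), for each admissible $P$,
\[
\psi(l_P)=\mathcal{E}_P:\ \frac{x^2}{a^2}+\frac{y^2}{b^2}+\frac{M}{P}x+\frac{N}{P}y=0,
\]
and each $\mathcal{E}_P$ passes through $O$. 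Completing the square rewrites this equation as $\frac{1}{a^2}\bigl(x+\tfrac{Ma^2}{2P}\bigr)^2+\frac{1}{b^2}\bigl(y+\tfrac{Nb^2}{2P}\bigr)^2=\frac{M^2a^2+N^2b^2}{4P^2}>0$, so $\mathcal{E}_P$ is a genuine ellipse for every $P\neq0$ and $O$ is a smooth point of it.

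The heart of the argument is the tangent line to $\mathcal{E}_P$ at $O$. Exactly as in the tangent-line computation used for the earlier corollary on perpendicular lines, the tangent at the origin to a conic passing through the origin is the polar of $O$, i.e.\ the linear part of the conic's equation; here that is $\frac{M}{P}x+\frac{N}{P}y=0$, which simplifies to $Mx+Ny=0$. This line is \emph{independent of $P$}: every member of the family $\{\mathcal{E}_P\}$ is tangent at $O$ to the single fixed line $Mx+Ny=0$, which is moreover parallel to all the original lines $l_P$. Consequently any two of the ellipses $\mathcal{E}_P$ are tangent to one another at the center of inversion.

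To confirm tangency in the strict sense — that distinct members meet only at $O$ — I would subtract the equations of two members $\mathcal{E}_{P_1}$ and $\mathcal{E}_{P_2}$ with $P_1\neq P_2$, obtaining $\bigl(\tfrac{1}{P_1}-\tfrac{1}{P_2}\bigr)(Mx+Ny)=0$, so every common point lies on $Mx+Ny=0$; substituting that back into either equation gives $\frac{x^2}{a^2}+\frac{y^2}{b^2}=0$, forcing $x=y=0$. Hence $\mathcal{E}_{P_1}\cap\mathcal{E}_{P_2}=\{O\}$ with common tangent $Mx+Ny=0$, which is exactly the assertion. The computation is entirely routine; the only point that deserves a word of care is the identification of the tangent at $O$ with the polar of $O$ — equivalently, that $(M,N)\neq(0,0)$ prevents $O$ from being a singular point of $\mathcal{E}_P$ — and that has already been settled by the square-completion above.
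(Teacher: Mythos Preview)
Your argument is correct and in the same spirit as the paper's treatment: the paper itself leaves this corollary without an explicit proof, presenting it as an immediate consequence of Theorem~\ref{invrelipse}(ii) together with the tangent-line computation already carried out in the proof of the preceding corollary on perpendicular lines. You have made that implicit reasoning explicit --- applying Theorem~\ref{invrelipse}(ii) to each $l_P$, reading off the tangent at $O$ as the linear part $Mx+Ny=0$, and observing its independence from $P$ --- and you go further than the paper by completing the square to confirm each $\mathcal{E}_P$ is a genuine ellipse and by proving that distinct $\mathcal{E}_{P_1},\mathcal{E}_{P_2}$ meet only at $O$. Nothing is missing or wrong.
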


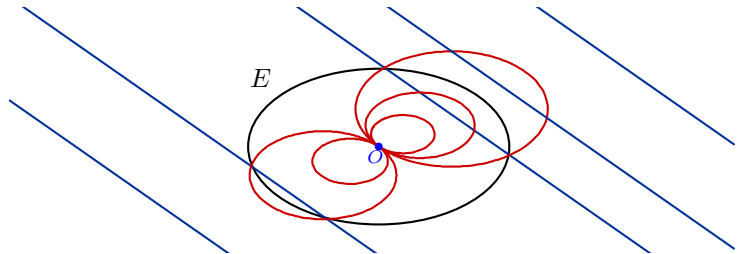
\begin{figure}[h]
	\begin{center}
\newrgbcolor{qqttzz}{0 0.2 0.6}
\newrgbcolor{ccqqqq}{0.8 0 0}
\psset{xunit=0.7cm,yunit=0.7cm,algebraic=true,dotstyle=o,dotsize=3pt 0,linewidth=0.8pt,arrowsize=3pt 2,arrowinset=0.25}
\begin{pspicture*}(-7.02,-2.02)(6.76,2.64)
\rput{0}(0,0){\psellipse(0,0)(2.5,1.5)}
\rput[tl](-2.44,1.48){$E$}
\psplot[linecolor=qqttzz]{-7.02}{6.76}{(--7.75-3.44*x)/4.92}
\pscustom[linecolor=ccqqqq]{\moveto(-0.36,1.04)
\lineto(-0.33,1.09)
\lineto(-0.29,1.15)
\lineto(-0.24,1.22)
\lineto(-0.17,1.29)
\lineto(-0.07,1.38)
\lineto(-0.01,1.42)
\lineto(0.05,1.46)
\lineto(0.09,1.49)
\lineto(0.1,1.49)
\lineto(0.11,1.5)
\lineto(0.11,1.5)
\lineto(0.11,1.5)
\lineto(0.11,1.5)
\lineto(0.11,1.5)
\lineto(0.11,1.5)
\lineto(0.11,1.5)
\moveto(2.49,-0.16)
\lineto(2.49,-0.16)
\lineto(2.49,-0.16)
\lineto(2.49,-0.16)
\lineto(2.49,-0.16)
\lineto(2.49,-0.16)
\lineto(2.49,-0.16)
\lineto(2.49,-0.16)
\lineto(2.49,-0.16)
\lineto(2.49,-0.16)
\lineto(2.49,-0.16)
\lineto(2.48,-0.16)
\lineto(2.48,-0.16)
\lineto(2.48,-0.16)
\lineto(2.48,-0.16)
\lineto(2.48,-0.17)
\lineto(2.47,-0.17)
\lineto(2.46,-0.17)
\lineto(2.43,-0.19)
\lineto(2.38,-0.21)
\lineto(2.32,-0.23)
\lineto(2.27,-0.25)
\lineto(2.21,-0.26)
\lineto(2.16,-0.28)
\lineto(2.11,-0.29)
\lineto(2.06,-0.31)
\lineto(1.96,-0.33)
\lineto(1.87,-0.34)
\lineto(1.78,-0.36)
\lineto(1.69,-0.37)
\lineto(1.61,-0.37)
\lineto(1.54,-0.38)
\lineto(1.46,-0.38)
\lineto(1.39,-0.38)
\lineto(1.33,-0.38)
\lineto(1.27,-0.38)
\lineto(1.21,-0.38)
\lineto(1.16,-0.37)
\lineto(1.1,-0.37)
\lineto(1.01,-0.36)
\lineto(0.95,-0.35)
\lineto(0.89,-0.34)
\lineto(0.83,-0.33)
\lineto(0.78,-0.32)
\lineto(0.69,-0.3)
\lineto(0.61,-0.28)
\lineto(0.54,-0.26)
\lineto(0.49,-0.24)
\lineto(0.43,-0.22)
\lineto(0.39,-0.2)
\lineto(0.35,-0.19)
\lineto(0.31,-0.17)
\lineto(0.28,-0.16)
\lineto(0.25,-0.14)
\lineto(0.22,-0.13)
\lineto(0.2,-0.12)
\lineto(0.17,-0.11)
\lineto(0.15,-0.09)
\lineto(0.13,-0.08)
\lineto(0.12,-0.07)
\lineto(0.1,-0.07)
\lineto(0.09,-0.06)
\lineto(0.07,-0.05)
\lineto(0.06,-0.04)
\lineto(0.05,-0.03)
\lineto(0.04,-0.03)
\lineto(0.03,-0.02)
\lineto(0.02,-0.01)
\lineto(0.01,-0.01)
\lineto(0,0)
\lineto(0,0)
\lineto(0,0)
\lineto(0,0)
\lineto(0,0)
\lineto(0,0)
\lineto(0,0)
\lineto(0,0)
\lineto(0,0)
\lineto(0,0)
\lineto(0,0)
\lineto(0,0)
\lineto(0,0)
\lineto(0,0)
\lineto(0,0)
\lineto(0,0)
\lineto(0,0)
\lineto(0,0)
\lineto(0,0)
\lineto(0,0)
\lineto(0,0)
\lineto(0,0)
\lineto(0,0)
\lineto(0,0)
\lineto(0,0)
\lineto(0,0)
\lineto(0,0)
\lineto(0,0)
\lineto(0,0)
\lineto(0,0)
\lineto(0,0)
\lineto(0,0)
\lineto(0,0)
\lineto(0,0)
\lineto(0,0)
\lineto(-0.01,0)
\lineto(-0.01,0.01)
\lineto(-0.02,0.01)
\lineto(-0.03,0.02)
\lineto(-0.04,0.03)
\lineto(-0.05,0.03)
\lineto(-0.05,0.04)
\lineto(-0.06,0.05)
\lineto(-0.07,0.05)
\lineto(-0.08,0.06)
\lineto(-0.09,0.07)
\lineto(-0.1,0.07)
\lineto(-0.11,0.08)
\lineto(-0.11,0.09)
\lineto(-0.12,0.1)
\lineto(-0.13,0.1)
\lineto(-0.14,0.11)
\lineto(-0.15,0.12)
\lineto(-0.16,0.13)
\lineto(-0.17,0.14)
\lineto(-0.17,0.14)
\lineto(-0.18,0.15)
\lineto(-0.19,0.16)
\lineto(-0.2,0.17)
\lineto(-0.21,0.18)
\lineto(-0.22,0.19)
\lineto(-0.23,0.2)
\lineto(-0.23,0.21)
\lineto(-0.24,0.22)
\lineto(-0.25,0.23)
\lineto(-0.26,0.24)
\lineto(-0.27,0.25)
\lineto(-0.28,0.26)
\lineto(-0.28,0.27)
\lineto(-0.29,0.28)
\lineto(-0.3,0.29)
\lineto(-0.31,0.3)
\lineto(-0.31,0.31)
\lineto(-0.32,0.32)
\lineto(-0.33,0.34)
\lineto(-0.34,0.35)
\lineto(-0.34,0.36)
\lineto(-0.35,0.37)
\lineto(-0.36,0.39)
\lineto(-0.36,0.4)
\lineto(-0.37,0.41)
\lineto(-0.38,0.42)
\lineto(-0.38,0.44)
\lineto(-0.39,0.45)
\lineto(-0.39,0.47)
\lineto(-0.4,0.48)
\lineto(-0.4,0.49)
\lineto(-0.41,0.51)
\lineto(-0.41,0.52)
\lineto(-0.42,0.54)
\lineto(-0.42,0.55)
\lineto(-0.42,0.57)
\lineto(-0.43,0.59)
\lineto(-0.43,0.6)
\lineto(-0.43,0.62)
\lineto(-0.44,0.64)
\lineto(-0.44,0.65)
\lineto(-0.44,0.67)
\lineto(-0.44,0.69)
\lineto(-0.44,0.7)
\lineto(-0.44,0.72)
\lineto(-0.44,0.74)
\lineto(-0.44,0.76)
\lineto(-0.44,0.79)
\lineto(-0.43,0.81)
\lineto(-0.43,0.84)
\lineto(-0.42,0.87)
\lineto(-0.41,0.91)
\lineto(-0.4,0.95)
\lineto(-0.38,0.99)
\lineto(-0.36,1.04)
\lineto(-0.36,1.04)
\lineto(-0.36,1.04)
}
\pscustom[linecolor=ccqqqq]{\moveto(0.22,1.56)
\lineto(0.22,1.56)
\lineto(0.27,1.58)
\lineto(0.33,1.61)
\lineto(0.39,1.63)
\lineto(0.45,1.66)
\lineto(0.52,1.68)
\lineto(0.59,1.7)
\lineto(0.67,1.72)
\lineto(0.75,1.74)
\lineto(0.84,1.76)
\lineto(0.94,1.78)
\lineto(0.98,1.78)
\lineto(1.04,1.79)
\lineto(1.09,1.8)
\lineto(1.14,1.8)
\lineto(1.2,1.8)
\lineto(1.25,1.81)
\lineto(1.31,1.81)
\lineto(1.37,1.81)
\lineto(1.43,1.81)
\lineto(1.49,1.81)
\lineto(1.56,1.81)
\lineto(1.62,1.8)
\lineto(1.68,1.8)
\lineto(1.75,1.79)
\lineto(1.82,1.78)
\lineto(1.88,1.77)
\lineto(1.95,1.76)
\lineto(2.02,1.74)
\lineto(2.09,1.73)
\lineto(2.16,1.71)
\lineto(2.22,1.69)
\lineto(2.29,1.67)
\lineto(2.36,1.64)
\lineto(2.42,1.62)
\lineto(2.49,1.59)
\lineto(2.55,1.56)
\lineto(2.61,1.53)
\lineto(2.67,1.49)
\lineto(2.73,1.46)
\lineto(2.78,1.42)
\lineto(2.84,1.38)
\lineto(2.89,1.34)
\lineto(2.98,1.26)
\lineto(3.05,1.16)
\lineto(3.12,1.07)
\lineto(3.16,0.97)
\lineto(3.19,0.87)
\lineto(3.21,0.78)
\lineto(3.21,0.68)
\lineto(3.2,0.58)
\lineto(3.18,0.49)
\lineto(3.14,0.4)
\lineto(3.09,0.32)
\lineto(3.04,0.24)
\lineto(2.97,0.17)
\lineto(2.9,0.1)
\lineto(2.83,0.04)
\lineto(2.75,-0.02)
\lineto(2.67,-0.07)
\lineto(2.59,-0.11)
\lineto(2.51,-0.15)
\lineto(2.49,-0.16)
\lineto(2.49,-0.16)
\lineto(2.49,-0.16)
\lineto(2.49,-0.16)
\lineto(2.49,-0.16)
\lineto(2.49,-0.16)
\lineto(2.49,-0.16)
\lineto(2.49,-0.16)
\lineto(2.49,-0.16)
\moveto(0.11,1.5)
\lineto(0.11,1.5)
\lineto(0.11,1.5)
\lineto(0.11,1.5)
\lineto(0.11,1.5)
\lineto(0.11,1.5)
\lineto(0.11,1.5)
\lineto(0.11,1.5)
\lineto(0.11,1.5)
\lineto(0.11,1.5)
\lineto(0.11,1.5)
\lineto(0.11,1.5)
\lineto(0.11,1.5)
\lineto(0.11,1.5)
\lineto(0.11,1.5)
\lineto(0.11,1.5)
\lineto(0.12,1.5)
\lineto(0.12,1.51)
\lineto(0.14,1.51)
\lineto(0.17,1.53)
\lineto(0.22,1.56)
\lineto(0.22,1.56)
}
\psplot[linecolor=qqttzz]{-7.02}{6.76}{(-10.16-3.44*x)/4.92}
\psplot[linecolor=qqttzz]{-7.02}{6.76}{(-19.8-3.44*x)/4.92}
\psplot[linecolor=qqttzz]{-7.02}{6.76}{(--23.42-3.44*x)/4.92}
\pscustom[linecolor=ccqqqq]{\moveto(-0.08,0.4)
\lineto(-0.08,0.41)
\lineto(-0.07,0.42)
\lineto(-0.06,0.42)
\lineto(-0.05,0.43)
\lineto(-0.04,0.44)
\lineto(-0.03,0.45)
\lineto(-0.02,0.46)
\lineto(-0.01,0.46)
\lineto(0,0.47)
\lineto(0.01,0.48)
\lineto(0.02,0.48)
\lineto(0.03,0.49)
\lineto(0.04,0.5)
\lineto(0.05,0.5)
\lineto(0.06,0.51)
\lineto(0.07,0.52)
\lineto(0.09,0.52)
\lineto(0.1,0.53)
\lineto(0.11,0.53)
\lineto(0.12,0.54)
\lineto(0.14,0.54)
\lineto(0.15,0.55)
\lineto(0.17,0.55)
\lineto(0.19,0.56)
\lineto(0.21,0.57)
\lineto(0.23,0.57)
\lineto(0.25,0.58)
\lineto(0.27,0.58)
\lineto(0.3,0.59)
\lineto(0.33,0.59)
\lineto(0.36,0.59)
\lineto(0.39,0.6)
\lineto(0.43,0.6)
\lineto(0.47,0.6)
\lineto(0.51,0.6)
\lineto(0.56,0.59)
\lineto(0.6,0.59)
\lineto(0.65,0.58)
\lineto(0.71,0.57)
\lineto(0.76,0.55)
\lineto(0.81,0.53)
\lineto(0.86,0.51)
\lineto(0.92,0.47)
\lineto(0.96,0.44)
\lineto(1,0.4)
\lineto(1.03,0.35)
\lineto(1.05,0.3)
\lineto(1.06,0.25)
\lineto(1.06,0.2)
\lineto(1.05,0.15)
\lineto(1.02,0.1)
\lineto(0.98,0.06)
\lineto(0.94,0.02)
\lineto(0.89,-0.02)
\lineto(0.83,-0.05)
\lineto(0.77,-0.07)
\lineto(0.71,-0.09)
\lineto(0.66,-0.11)
\lineto(0.6,-0.12)
\lineto(0.55,-0.12)
\lineto(0.49,-0.13)
\lineto(0.45,-0.13)
\lineto(0.4,-0.12)
\lineto(0.36,-0.12)
\lineto(0.32,-0.12)
\lineto(0.29,-0.11)
\lineto(0.26,-0.11)
\lineto(0.23,-0.1)
\lineto(0.2,-0.09)
\lineto(0.18,-0.08)
\lineto(0.15,-0.08)
\lineto(0.13,-0.07)
\lineto(0.11,-0.06)
\lineto(0.1,-0.05)
\lineto(0.08,-0.05)
\lineto(0.07,-0.04)
\lineto(0.05,-0.03)
\lineto(0.04,-0.03)
\lineto(0.03,-0.02)
\lineto(0.02,-0.01)
\lineto(0.01,-0.01)
\lineto(0,0)
\lineto(0,0)
\lineto(0,0)
\lineto(0,0)
\lineto(0,0)
\lineto(0,0)
\lineto(0,0)
\lineto(0,0)
\lineto(0,0)
\lineto(0,0)
\lineto(0,0)
\lineto(0,0)
\lineto(0,0)
\lineto(0,0)
\lineto(0,0)
\lineto(0,0)
\lineto(0,0)
\lineto(0,0)
\lineto(0,0)
\lineto(0,0)
\lineto(0,0)
\lineto(0,0)
\lineto(0,0)
\lineto(0,0)
\lineto(0,0)
\lineto(0,0)
\lineto(0,0)
\lineto(0,0)
\lineto(0,0)
\lineto(0,0)
\lineto(0,0)
\lineto(-0.01,0)
\lineto(-0.01,0.01)
\lineto(-0.02,0.01)
\lineto(-0.03,0.02)
\lineto(-0.04,0.03)
\lineto(-0.04,0.03)
\lineto(-0.05,0.04)
\lineto(-0.06,0.05)
\lineto(-0.07,0.06)
\lineto(-0.07,0.06)
\lineto(-0.08,0.07)
\lineto(-0.09,0.08)
\lineto(-0.09,0.09)
\lineto(-0.1,0.09)
\lineto(-0.1,0.1)
\lineto(-0.11,0.11)
\lineto(-0.11,0.12)
\lineto(-0.12,0.13)
\lineto(-0.12,0.14)
\lineto(-0.13,0.15)
\lineto(-0.13,0.15)
\lineto(-0.13,0.16)
\lineto(-0.14,0.17)
\lineto(-0.14,0.18)
\lineto(-0.14,0.19)
\lineto(-0.14,0.2)
\lineto(-0.14,0.21)
\lineto(-0.15,0.22)
\lineto(-0.15,0.23)
\lineto(-0.15,0.24)
\lineto(-0.15,0.25)
\lineto(-0.14,0.26)
\lineto(-0.14,0.27)
\lineto(-0.14,0.28)
\lineto(-0.14,0.29)
\lineto(-0.14,0.3)
\lineto(-0.13,0.31)
\lineto(-0.13,0.32)
\lineto(-0.13,0.33)
\lineto(-0.12,0.34)
\lineto(-0.12,0.35)
\lineto(-0.11,0.36)
\lineto(-0.11,0.37)
\lineto(-0.1,0.38)
\lineto(-0.09,0.38)
\lineto(-0.09,0.39)
\lineto(-0.08,0.4)
\lineto(-0.08,0.4)
}
\pscustom[linecolor=ccqqqq]{\moveto(-0.95,-1.38)
\lineto(-0.95,-1.38)
\lineto(-0.89,-1.37)
\lineto(-0.82,-1.37)
\lineto(-0.77,-1.36)
\lineto(-0.71,-1.35)
\lineto(-0.65,-1.34)
\lineto(-0.6,-1.33)
\lineto(-0.54,-1.32)
\lineto(-0.49,-1.31)
\lineto(-0.39,-1.28)
\lineto(-0.3,-1.25)
\lineto(-0.22,-1.21)
\lineto(-0.14,-1.18)
\lineto(-0.08,-1.14)
\lineto(-0.02,-1.1)
\lineto(0.04,-1.06)
\lineto(0.13,-0.98)
\lineto(0.2,-0.91)
\lineto(0.25,-0.84)
\lineto(0.29,-0.77)
\lineto(0.31,-0.7)
\lineto(0.33,-0.65)
\lineto(0.33,-0.59)
\lineto(0.34,-0.54)
\lineto(0.33,-0.5)
\lineto(0.33,-0.46)
\lineto(0.32,-0.42)
\lineto(0.31,-0.38)
\lineto(0.3,-0.35)
\lineto(0.29,-0.32)
\lineto(0.27,-0.29)
\lineto(0.26,-0.27)
\lineto(0.24,-0.25)
\lineto(0.23,-0.23)
\lineto(0.22,-0.21)
\lineto(0.2,-0.19)
\lineto(0.19,-0.17)
\lineto(0.18,-0.15)
\lineto(0.16,-0.14)
\lineto(0.15,-0.13)
\lineto(0.14,-0.11)
\lineto(0.12,-0.1)
\lineto(0.11,-0.09)
\lineto(0.1,-0.08)
\lineto(0.09,-0.07)
\lineto(0.08,-0.06)
\lineto(0.07,-0.05)
\lineto(0.06,-0.04)
\lineto(0.05,-0.04)
\lineto(0.04,-0.03)
\lineto(0.03,-0.02)
\lineto(0.02,-0.01)
\lineto(0.01,-0.01)
\lineto(0,0)
\lineto(0,0)
\lineto(0,0)
\lineto(0,0)
\lineto(0,0)
\lineto(0,0)
\lineto(0,0)
\lineto(0,0)
\lineto(0,0)
\lineto(0,0)
\lineto(0,0)
\lineto(0,0)
\lineto(0,0)
\lineto(0,0)
\lineto(0,0)
\lineto(0,0)
\lineto(0,0)
\lineto(0,0)
\lineto(0,0)
\lineto(0,0)
\lineto(0,0)
\lineto(0,0)
\lineto(0,0)
\lineto(0,0)
\lineto(0,0)
\lineto(0,0)
\lineto(0,0)
\lineto(-0.01,0)
\lineto(-0.01,0.01)
\lineto(-0.02,0.01)
\lineto(-0.03,0.02)
\lineto(-0.04,0.03)
\lineto(-0.05,0.03)
\lineto(-0.06,0.04)
\lineto(-0.07,0.05)
\lineto(-0.08,0.05)
\lineto(-0.09,0.06)
\lineto(-0.1,0.07)
\lineto(-0.12,0.07)
\lineto(-0.13,0.08)
\lineto(-0.14,0.09)
\lineto(-0.15,0.09)
\lineto(-0.17,0.1)
\lineto(-0.18,0.11)
\lineto(-0.2,0.11)
\lineto(-0.21,0.12)
\lineto(-0.23,0.13)
\lineto(-0.25,0.14)
\lineto(-0.26,0.14)
\lineto(-0.28,0.15)
\lineto(-0.3,0.16)
\lineto(-0.32,0.16)
\lineto(-0.34,0.17)
\lineto(-0.36,0.18)
\lineto(-0.38,0.19)
\lineto(-0.4,0.19)
\lineto(-0.43,0.2)
\lineto(-0.45,0.21)
\lineto(-0.48,0.22)
\lineto(-0.5,0.22)
\lineto(-0.53,0.23)
\lineto(-0.56,0.24)
\lineto(-0.59,0.24)
\lineto(-0.62,0.25)
\lineto(-0.65,0.25)
\lineto(-0.68,0.26)
\lineto(-0.71,0.27)
\lineto(-0.75,0.27)
\lineto(-0.78,0.28)
\lineto(-0.82,0.28)
\lineto(-0.86,0.28)
\lineto(-0.9,0.29)
\lineto(-0.94,0.29)
\lineto(-0.98,0.29)
\lineto(-1.03,0.29)
\lineto(-1.07,0.29)
\lineto(-1.12,0.29)
\lineto(-1.17,0.29)
\lineto(-1.22,0.29)
\lineto(-1.27,0.28)
\lineto(-1.32,0.28)
\lineto(-1.37,0.27)
\lineto(-1.42,0.26)
\lineto(-1.48,0.25)
\lineto(-1.53,0.24)
\lineto(-1.59,0.23)
\lineto(-1.65,0.21)
\lineto(-1.7,0.2)
\lineto(-1.76,0.18)
\lineto(-1.82,0.16)
\lineto(-1.87,0.13)
\lineto(-1.93,0.11)
\lineto(-1.98,0.08)
\lineto(-2.04,0.05)
\lineto(-2.11,0.01)
\lineto(-2.17,-0.04)
\lineto(-2.23,-0.09)
\lineto(-2.29,-0.15)
\lineto(-2.34,-0.22)
\lineto(-2.39,-0.3)
\lineto(-2.41,-0.35)
\lineto(-2.42,-0.37)
\lineto(-2.42,-0.37)
\lineto(-2.42,-0.37)
\lineto(-2.42,-0.37)
\lineto(-2.42,-0.37)
\lineto(-2.42,-0.37)
\lineto(-2.42,-0.37)
\moveto(-0.98,-1.38)
\lineto(-0.98,-1.38)
\lineto(-0.98,-1.38)
\lineto(-0.98,-1.38)
\lineto(-0.98,-1.38)
\lineto(-0.98,-1.38)
\lineto(-0.98,-1.38)
\lineto(-0.98,-1.38)
\lineto(-0.98,-1.38)
\lineto(-0.98,-1.38)
\lineto(-0.98,-1.38)
\lineto(-0.98,-1.38)
\lineto(-0.98,-1.38)
\lineto(-0.98,-1.38)
\lineto(-0.98,-1.38)
\lineto(-0.98,-1.38)
\lineto(-0.97,-1.38)
\lineto(-0.96,-1.38)
\lineto(-0.95,-1.38)
}
\pscustom[linecolor=ccqqqq]{\moveto(-0.73,0.13)
\lineto(-0.75,0.13)
\lineto(-0.77,0.13)
\lineto(-0.79,0.12)
\lineto(-0.81,0.12)
\lineto(-0.83,0.11)
\lineto(-0.85,0.11)
\lineto(-0.87,0.1)
\lineto(-0.89,0.1)
\lineto(-0.91,0.09)
\lineto(-0.93,0.08)
\lineto(-0.95,0.07)
\lineto(-0.96,0.07)
\lineto(-0.98,0.06)
\lineto(-1,0.05)
\lineto(-1.01,0.04)
\lineto(-1.03,0.03)
\lineto(-1.05,0.03)
\lineto(-1.06,0.02)
\lineto(-1.08,0.01)
\lineto(-1.09,0)
\lineto(-1.11,-0.02)
\lineto(-1.12,-0.03)
\lineto(-1.14,-0.04)
\lineto(-1.15,-0.06)
\lineto(-1.17,-0.07)
\lineto(-1.19,-0.09)
\lineto(-1.2,-0.11)
\lineto(-1.22,-0.13)
\lineto(-1.23,-0.16)
\lineto(-1.24,-0.19)
\lineto(-1.25,-0.21)
\lineto(-1.26,-0.25)
\lineto(-1.26,-0.28)
\lineto(-1.26,-0.32)
\lineto(-1.25,-0.35)
\lineto(-1.23,-0.39)
\lineto(-1.21,-0.44)
\lineto(-1.18,-0.48)
\lineto(-1.13,-0.52)
\lineto(-1.08,-0.56)
\lineto(-1.01,-0.6)
\lineto(-0.94,-0.64)
\lineto(-0.85,-0.67)
\lineto(-0.75,-0.69)
\lineto(-0.66,-0.7)
\lineto(-0.6,-0.71)
\lineto(-0.55,-0.71)
\lineto(-0.5,-0.71)
\lineto(-0.45,-0.71)
\lineto(-0.35,-0.69)
\lineto(-0.26,-0.67)
\lineto(-0.18,-0.65)
\lineto(-0.1,-0.62)
\lineto(-0.04,-0.58)
\lineto(0.01,-0.55)
\lineto(0.06,-0.51)
\lineto(0.09,-0.48)
\lineto(0.12,-0.44)
\lineto(0.14,-0.4)
\lineto(0.16,-0.37)
\lineto(0.17,-0.34)
\lineto(0.17,-0.31)
\lineto(0.17,-0.28)
\lineto(0.17,-0.26)
\lineto(0.17,-0.23)
\lineto(0.16,-0.21)
\lineto(0.16,-0.19)
\lineto(0.15,-0.17)
\lineto(0.14,-0.15)
\lineto(0.13,-0.14)
\lineto(0.12,-0.12)
\lineto(0.11,-0.11)
\lineto(0.1,-0.09)
\lineto(0.09,-0.08)
\lineto(0.08,-0.07)
\lineto(0.07,-0.06)
\lineto(0.06,-0.05)
\lineto(0.05,-0.04)
\lineto(0.04,-0.03)
\lineto(0.03,-0.03)
\lineto(0.02,-0.02)
\lineto(0.02,-0.01)
\lineto(0.01,-0.01)
\lineto(0,0)
\lineto(-0.01,0)
\lineto(-0.01,0.01)
\lineto(-0.02,0.01)
\lineto(-0.03,0.02)
\lineto(-0.04,0.03)
\lineto(-0.05,0.03)
\lineto(-0.06,0.04)
\lineto(-0.07,0.04)
\lineto(-0.08,0.05)
\lineto(-0.09,0.05)
\lineto(-0.1,0.06)
\lineto(-0.12,0.07)
\lineto(-0.13,0.07)
\lineto(-0.14,0.08)
\lineto(-0.15,0.08)
\lineto(-0.17,0.09)
\lineto(-0.18,0.09)
\lineto(-0.2,0.1)
\lineto(-0.21,0.1)
\lineto(-0.23,0.11)
\lineto(-0.24,0.11)
\lineto(-0.26,0.11)
\lineto(-0.27,0.12)
\lineto(-0.29,0.12)
\lineto(-0.31,0.13)
\lineto(-0.32,0.13)
\lineto(-0.34,0.13)
\lineto(-0.36,0.14)
\lineto(-0.38,0.14)
\lineto(-0.4,0.14)
\lineto(-0.41,0.14)
\lineto(-0.43,0.14)
\lineto(-0.45,0.15)
\lineto(-0.47,0.15)
\lineto(-0.49,0.15)
\lineto(-0.51,0.15)
\lineto(-0.53,0.15)
\lineto(-0.55,0.15)
\lineto(-0.57,0.15)
\lineto(-0.59,0.15)
\lineto(-0.61,0.15)
\lineto(-0.63,0.15)
\lineto(-0.65,0.14)
\lineto(-0.67,0.14)
\lineto(-0.7,0.14)
\lineto(-0.72,0.14)
\lineto(-0.73,0.13)
}
\pscustom[linecolor=ccqqqq]{\moveto(-2.36,-0.85)
\lineto(-2.31,-0.91)
\lineto(-2.26,-0.97)
\lineto(-2.19,-1.03)
\lineto(-2.11,-1.09)
\lineto(-2.02,-1.15)
\lineto(-1.93,-1.2)
\lineto(-1.87,-1.22)
\lineto(-1.82,-1.25)
\lineto(-1.76,-1.27)
\lineto(-1.7,-1.29)
\lineto(-1.64,-1.3)
\lineto(-1.58,-1.32)
\lineto(-1.52,-1.33)
\lineto(-1.46,-1.35)
\lineto(-1.39,-1.36)
\lineto(-1.33,-1.36)
\lineto(-1.27,-1.37)
\lineto(-1.2,-1.38)
\lineto(-1.14,-1.38)
\lineto(-1.07,-1.38)
\lineto(-1.01,-1.38)
\lineto(-0.99,-1.38)
\lineto(-0.99,-1.38)
\lineto(-0.98,-1.38)
\lineto(-0.98,-1.38)
\lineto(-0.98,-1.38)
\lineto(-0.98,-1.38)
\lineto(-0.98,-1.38)
\lineto(-0.98,-1.38)
\moveto(-2.42,-0.37)
\lineto(-2.42,-0.37)
\lineto(-2.42,-0.37)
\lineto(-2.42,-0.37)
\lineto(-2.42,-0.37)
\lineto(-2.42,-0.37)
\lineto(-2.42,-0.37)
\lineto(-2.42,-0.37)
\lineto(-2.42,-0.37)
\lineto(-2.42,-0.37)
\lineto(-2.42,-0.37)
\lineto(-2.42,-0.37)
\lineto(-2.42,-0.37)
\lineto(-2.42,-0.37)
\lineto(-2.42,-0.37)
\lineto(-2.42,-0.38)
\lineto(-2.42,-0.38)
\lineto(-2.43,-0.39)
\lineto(-2.43,-0.4)
\lineto(-2.44,-0.44)
\lineto(-2.45,-0.51)
\lineto(-2.45,-0.56)
\lineto(-2.45,-0.62)
\lineto(-2.43,-0.68)
\lineto(-2.41,-0.74)
\lineto(-2.38,-0.81)
\lineto(-2.36,-0.85)
}
\psplot[linecolor=qqttzz]{-7.02}{6.76}{(--13.67-3.44*x)/4.92}
\pscustom[linecolor=ccqqqq]{\moveto(-0.03,0.79)
\lineto(0.02,0.82)
\lineto(0.08,0.86)
\lineto(0.14,0.89)
\lineto(0.22,0.92)
\lineto(0.3,0.95)
\lineto(0.38,0.98)
\lineto(0.48,1)
\lineto(0.57,1.01)
\lineto(0.62,1.02)
\lineto(0.68,1.02)
\lineto(0.73,1.03)
\lineto(0.78,1.03)
\lineto(0.83,1.03)
\lineto(0.88,1.02)
\lineto(0.93,1.02)
\lineto(0.99,1.02)
\lineto(1.04,1.01)
\lineto(1.09,1)
\lineto(1.19,0.98)
\lineto(1.28,0.95)
\lineto(1.36,0.92)
\lineto(1.44,0.89)
\lineto(1.51,0.85)
\lineto(1.58,0.81)
\lineto(1.63,0.76)
\lineto(1.68,0.72)
\lineto(1.72,0.68)
\lineto(1.75,0.63)
\lineto(1.78,0.59)
\lineto(1.8,0.54)
\lineto(1.81,0.49)
\lineto(1.82,0.44)
\lineto(1.82,0.39)
\lineto(1.82,0.34)
\lineto(1.81,0.29)
\lineto(1.79,0.24)
\lineto(1.76,0.19)
\lineto(1.73,0.15)
\lineto(1.69,0.1)
\lineto(1.65,0.06)
\lineto(1.61,0.03)
\lineto(1.56,-0.01)
\lineto(1.51,-0.04)
\lineto(1.45,-0.07)
\lineto(1.4,-0.1)
\lineto(1.34,-0.12)
\lineto(1.28,-0.14)
\lineto(1.22,-0.16)
\lineto(1.17,-0.17)
\lineto(1.11,-0.19)
\lineto(1.05,-0.2)
\lineto(1,-0.2)
\lineto(0.95,-0.21)
\lineto(0.9,-0.21)
\lineto(0.85,-0.22)
\lineto(0.8,-0.22)
\lineto(0.75,-0.22)
\lineto(0.71,-0.22)
\lineto(0.67,-0.21)
\lineto(0.63,-0.21)
\lineto(0.59,-0.21)
\lineto(0.55,-0.2)
\lineto(0.52,-0.2)
\lineto(0.49,-0.19)
\lineto(0.45,-0.18)
\lineto(0.42,-0.18)
\lineto(0.4,-0.17)
\lineto(0.37,-0.16)
\lineto(0.34,-0.16)
\lineto(0.32,-0.15)
\lineto(0.29,-0.14)
\lineto(0.27,-0.14)
\lineto(0.25,-0.13)
\lineto(0.23,-0.12)
\lineto(0.21,-0.11)
\lineto(0.19,-0.11)
\lineto(0.18,-0.1)
\lineto(0.16,-0.09)
\lineto(0.15,-0.08)
\lineto(0.13,-0.08)
\lineto(0.12,-0.07)
\lineto(0.1,-0.06)
\lineto(0.09,-0.06)
\lineto(0.08,-0.05)
\lineto(0.07,-0.04)
\lineto(0.06,-0.04)
\lineto(0.04,-0.03)
\lineto(0.03,-0.02)
\lineto(0.02,-0.02)
\lineto(0.02,-0.01)
\lineto(0.01,0)
\lineto(0,0)
\lineto(0,0)
\lineto(0,0)
\lineto(0,0)
\lineto(0,0)
\lineto(0,0)
\lineto(0,0)
\lineto(0,0)
\lineto(0,0)
\lineto(0,0)
\lineto(0,0)
\lineto(0,0)
\lineto(0,0)
\lineto(0,0)
\lineto(0,0)
\lineto(0,0)
\lineto(0,0)
\lineto(0,0)
\lineto(0,0)
\lineto(0,0)
\lineto(0,0)
\lineto(0,0)
\lineto(0,0)
\lineto(0,0)
\lineto(0,0)
\lineto(0,0)
\lineto(0,0)
\lineto(0,0)
\lineto(0,0)
\lineto(0,0)
\lineto(0,0)
\lineto(0,0)
\lineto(-0.01,0)
\lineto(-0.01,0.01)
\lineto(-0.02,0.01)
\lineto(-0.03,0.02)
\lineto(-0.04,0.03)
\lineto(-0.05,0.04)
\lineto(-0.06,0.04)
\lineto(-0.07,0.05)
\lineto(-0.08,0.06)
\lineto(-0.09,0.07)
\lineto(-0.1,0.08)
\lineto(-0.1,0.09)
\lineto(-0.11,0.1)
\lineto(-0.12,0.11)
\lineto(-0.13,0.12)
\lineto(-0.15,0.13)
\lineto(-0.16,0.15)
\lineto(-0.16,0.16)
\lineto(-0.17,0.17)
\lineto(-0.18,0.19)
\lineto(-0.19,0.2)
\lineto(-0.2,0.22)
\lineto(-0.21,0.24)
\lineto(-0.22,0.26)
\lineto(-0.23,0.28)
\lineto(-0.23,0.3)
\lineto(-0.24,0.32)
\lineto(-0.24,0.34)
\lineto(-0.25,0.36)
\lineto(-0.25,0.39)
\lineto(-0.25,0.42)
\lineto(-0.25,0.45)
\lineto(-0.24,0.48)
\lineto(-0.24,0.51)
\lineto(-0.23,0.54)
\lineto(-0.21,0.57)
\lineto(-0.19,0.61)
\lineto(-0.17,0.64)
\lineto(-0.14,0.68)
\lineto(-0.11,0.72)
\lineto(-0.07,0.75)
\lineto(-0.03,0.79)
\lineto(-0.03,0.79)
}
\begin{scriptsize}
\psdots[dotstyle=*,linecolor=blue](0,0)
\rput[bl](-0.22,-0.32){\blue{$O$}}
\end{scriptsize}
\end{pspicture*}
	\end{center}
	\caption{Inversion in an Ellipse of a system of parallel lines.}
	\label{fig:RectasParaleInvElipse}
\end{figure}

\subsection{Elliptic Inversion of Ellipses}

\begin{definition}
If two ellipses  $E_1$ and $E_2$ have parallel axes and have equal eccentricities, then they are said to be of the same semi-form. If in addition the princpal axes are parallel, then they are called  homothetic and it is denoted by $E_1 \sim E_2$.
\end{definition}

\begin{theorem}\label{elipsem}
Let $\chi$ and $\chi'$ be an ellipse and its elliptic inverse curve with respect to the ellipse $E$. Let $\chi$ and $E$ be homothetic curves ($\chi \sim E$), then
\begin{enumerate}[i.]
	\item If $\chi$ not passing through the center of inversion, then  $\chi'$ is an ellipse not passing through the center of inversion  and $\chi' \sim E$, see Figure \ref{fig:elipsesinversioncaso3}.
	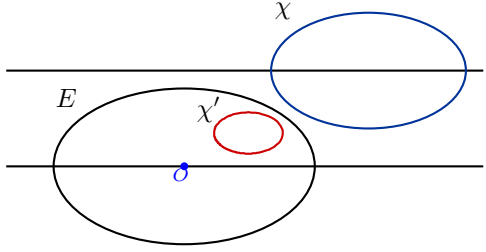
\begin{figure}[h]
	\begin{center}
	\newrgbcolor{qqttzz}{0 0.2 0.6}
\newrgbcolor{ccqqqq}{0.8 0 0}
\psset{xunit=0.7cm,yunit=0.7cm,algebraic=true,dotstyle=o,dotsize=3pt 0,linewidth=0.8pt,arrowsize=3pt 2,arrowinset=0.25}
\begin{pspicture*}(-3.38,-1.7)(5.68,3.42)
\rput{0}(0,0){\psellipse(0,0)(2.5,1.5)}
\rput[tl](-2.44,1.48){$E$}
\psplot{-3.38}{5.68}{(-0-0*x)/4}
\psplot{-3.38}{5.68}{(--7.28-0*x)/4}
\rput{0}(3.5,1.82){\psellipse[linecolor=qqttzz](0,0)(1.87,1.12)}
\pscustom[linecolor=ccqqqq]{\moveto(0.57,0.69)
\lineto(0.58,0.7)
\lineto(0.58,0.72)
\lineto(0.59,0.73)
\lineto(0.59,0.75)
\lineto(0.6,0.76)
\lineto(0.61,0.78)
\lineto(0.63,0.8)
\lineto(0.64,0.81)
\lineto(0.66,0.83)
\lineto(0.67,0.85)
\lineto(0.7,0.87)
\lineto(0.72,0.89)
\lineto(0.75,0.9)
\lineto(0.78,0.92)
\lineto(0.81,0.94)
\lineto(0.85,0.96)
\lineto(0.85,0.96)
\lineto(0.86,0.96)
\lineto(0.86,0.96)
\lineto(0.86,0.96)
\lineto(0.86,0.96)
\lineto(0.86,0.96)
\lineto(0.86,0.96)
\lineto(0.86,0.96)
\lineto(0.86,0.96)
\lineto(0.86,0.96)
\lineto(0.86,0.96)
\lineto(0.86,0.96)
\lineto(0.86,0.96)
\lineto(0.86,0.96)
\lineto(0.86,0.96)
\lineto(0.86,0.96)
\lineto(0.86,0.96)
\lineto(0.86,0.96)
\lineto(0.86,0.96)
\lineto(0.86,0.96)
\lineto(0.86,0.96)
\lineto(0.86,0.96)
\lineto(0.86,0.96)
\lineto(0.86,0.96)
\lineto(0.86,0.96)
\lineto(0.86,0.96)
\lineto(0.86,0.96)
\lineto(0.86,0.96)
\lineto(0.86,0.96)
\lineto(0.86,0.96)
\lineto(0.86,0.96)
\lineto(0.86,0.96)
\lineto(0.86,0.96)
\lineto(0.87,0.96)
\lineto(0.88,0.97)
\lineto(0.89,0.97)
\lineto(0.93,0.99)
\lineto(0.98,1)
\lineto(1.04,1.01)
\lineto(1.09,1.02)
\lineto(1.15,1.02)
\lineto(1.22,1.03)
\lineto(1.28,1.02)
\lineto(1.35,1.02)
\lineto(1.42,1.01)
\lineto(1.48,0.99)
\lineto(1.55,0.97)
\lineto(1.61,0.95)
\lineto(1.67,0.92)
\lineto(1.72,0.89)
\lineto(1.77,0.85)
\lineto(1.8,0.81)
\lineto(1.83,0.77)
\lineto(1.85,0.73)
\lineto(1.87,0.69)
\lineto(1.87,0.64)
\lineto(1.87,0.6)
\lineto(1.86,0.56)
\lineto(1.85,0.53)
\lineto(1.83,0.49)
\lineto(1.81,0.46)
\lineto(1.78,0.43)
\lineto(1.75,0.41)
\lineto(1.72,0.38)
\lineto(1.69,0.36)
\lineto(1.65,0.34)
\lineto(1.62,0.32)
\lineto(1.58,0.31)
\lineto(1.55,0.3)
\lineto(1.52,0.29)
\lineto(1.49,0.28)
\lineto(1.45,0.27)
\lineto(1.42,0.26)
\lineto(1.39,0.26)
\lineto(1.36,0.25)
\lineto(1.33,0.25)
\lineto(1.3,0.25)
\lineto(1.28,0.24)
\lineto(1.25,0.24)
\lineto(1.23,0.24)
\lineto(1.2,0.24)
\lineto(1.18,0.24)
\lineto(1.16,0.24)
\lineto(1.14,0.25)
\lineto(1.11,0.25)
\lineto(1.09,0.25)
\lineto(1.08,0.25)
\lineto(1.06,0.25)
\lineto(1.04,0.26)
\lineto(1.02,0.26)
\lineto(1,0.26)
\lineto(0.99,0.27)
\lineto(0.97,0.27)
\lineto(0.96,0.28)
\lineto(0.94,0.28)
\lineto(0.93,0.28)
\lineto(0.91,0.29)
\lineto(0.9,0.29)
\lineto(0.89,0.3)
\lineto(0.87,0.3)
\lineto(0.86,0.31)
\lineto(0.85,0.31)
\lineto(0.84,0.32)
\lineto(0.82,0.32)
\lineto(0.81,0.33)
\lineto(0.8,0.33)
\lineto(0.79,0.34)
\lineto(0.78,0.34)
\lineto(0.77,0.35)
\lineto(0.76,0.35)
\lineto(0.75,0.36)
\lineto(0.74,0.37)
\lineto(0.73,0.37)
\lineto(0.72,0.38)
\lineto(0.72,0.38)
\lineto(0.71,0.39)
\lineto(0.7,0.4)
\lineto(0.69,0.4)
\lineto(0.68,0.41)
\lineto(0.68,0.42)
\lineto(0.67,0.42)
\lineto(0.66,0.43)
\lineto(0.65,0.44)
\lineto(0.65,0.45)
\lineto(0.64,0.45)
\lineto(0.63,0.46)
\lineto(0.63,0.47)
\lineto(0.62,0.48)
\lineto(0.61,0.49)
\lineto(0.61,0.5)
\lineto(0.6,0.5)
\lineto(0.6,0.51)
\lineto(0.59,0.52)
\lineto(0.59,0.53)
\lineto(0.59,0.54)
\lineto(0.58,0.55)
\lineto(0.58,0.56)
\lineto(0.57,0.57)
\lineto(0.57,0.58)
\lineto(0.57,0.59)
\lineto(0.57,0.61)
\lineto(0.57,0.62)
\lineto(0.57,0.63)
\lineto(0.57,0.64)
\lineto(0.57,0.66)
\lineto(0.57,0.67)
\lineto(0.57,0.68)
\lineto(0.57,0.69)
}
\rput[tl](1.72,3.12){$\chi$}
\rput[tl](0.24,1.32){$\chi'$}
\begin{scriptsize}
\psdots[dotstyle=*,linecolor=blue](0,0)
\rput[bl](-0.22,-0.28){\blue{$O$}}
\end{scriptsize}
\end{pspicture*}
	\end{center}
	\caption{Theorem \ref{elipsem}, Case $i$.}
	\label{fig:elipsesinversioncaso3}
\end{figure}

	\item If $\chi$ passing through the center of inversion, then  $\chi'$  is a line, see Figure \ref{fig:elipsesinversioncaso4y5}.
	\begin{figure}[h]
	\begin{center}
		\newrgbcolor{qqttzz}{0 0.2 0.6}
\newrgbcolor{ccqqqq}{0.8 0 0}
\psset{xunit=0.7cm,yunit=0.7cm,algebraic=true,dotstyle=o,dotsize=3pt 0,linewidth=0.8pt,arrowsize=3pt 2,arrowinset=0.25}
\begin{pspicture*}(-4.68,-1.7)(4.74,2.44)
\rput{0}(0,0){\psellipse(0,0)(2.5,1.5)}
\rput[tl](-2.28,1.62){$E$}
\psplot[linestyle=dashed,dash=1pt 1pt]{-4.68}{4.74}{(-0-0*x)/4}
\psplot[linestyle=dashed,dash=1pt 1pt]{-4.68}{4.74}{(--2.72-0*x)/4}
\rput[tl](-3.86,1.86){$\chi$}
\rput[tl](0.82,2.46){$\chi'$}
\rput{0}(-1.69,0.68){\psellipse[linecolor=qqttzz](0,0)(2.04,1.22)}
\pscustom[linecolor=ccqqqq]{\moveto(-0.95,0.8)
\lineto(-0.98,0.78)
\lineto(-1,0.76)
\lineto(-1.02,0.74)
\lineto(-1.05,0.72)
\lineto(-1.07,0.7)
\lineto(-1.09,0.68)
\lineto(-1.11,0.66)
\lineto(-1.13,0.64)
\lineto(-1.16,0.62)
\lineto(-1.18,0.6)
\lineto(-1.2,0.58)
\lineto(-1.22,0.56)
\lineto(-1.24,0.55)
\lineto(-1.26,0.53)
\lineto(-1.28,0.51)
\lineto(-1.3,0.49)
\lineto(-1.32,0.47)
\lineto(-1.34,0.45)
\lineto(-1.37,0.43)
\lineto(-1.39,0.41)
\lineto(-1.41,0.39)
\lineto(-1.43,0.37)
\lineto(-1.45,0.36)
\lineto(-1.47,0.34)
\lineto(-1.5,0.32)
\lineto(-1.52,0.3)
\lineto(-1.53,0.29)
\lineto(-1.54,0.28)
\lineto(-1.54,0.28)
\lineto(-1.54,0.28)
\lineto(-1.54,0.28)
\lineto(-1.54,0.28)
\lineto(-1.54,0.28)
\lineto(-1.54,0.28)
\lineto(-1.54,0.28)
\lineto(-1.54,0.28)
\lineto(-1.54,0.28)
\lineto(-1.54,0.28)
\lineto(-1.54,0.28)
\lineto(-1.54,0.28)
\lineto(-1.54,0.28)
\lineto(-1.54,0.28)
\lineto(-1.54,0.28)
\lineto(-1.54,0.28)
\lineto(-1.54,0.28)
\lineto(-1.54,0.28)
\lineto(-1.54,0.28)
\lineto(-1.54,0.28)
\lineto(-1.54,0.28)
\lineto(-1.54,0.28)
\lineto(-1.54,0.28)
\lineto(-1.54,0.28)
\lineto(-1.54,0.28)
\lineto(-1.54,0.28)
\lineto(-1.54,0.28)
\lineto(-1.54,0.28)
\lineto(-1.54,0.28)
\lineto(-1.54,0.28)
\lineto(-1.55,0.27)
\lineto(-1.56,0.26)
\lineto(-1.58,0.24)
\lineto(-1.6,0.22)
\lineto(-1.62,0.2)
\lineto(-1.65,0.18)
\lineto(-1.67,0.16)
\lineto(-1.7,0.13)
\lineto(-1.73,0.11)
\lineto(-1.75,0.09)
\lineto(-1.78,0.06)
\lineto(-1.81,0.04)
\lineto(-1.84,0.01)
\lineto(-1.87,-0.02)
\lineto(-1.9,-0.05)
\lineto(-1.93,-0.08)
\lineto(-1.97,-0.11)
\lineto(-2,-0.14)
\lineto(-2.04,-0.17)
\lineto(-2.08,-0.21)
\lineto(-2.12,-0.24)
\lineto(-2.16,-0.28)
\lineto(-2.21,-0.32)
\lineto(-2.26,-0.36)
\lineto(-2.31,-0.41)
\lineto(-2.36,-0.45)
\lineto(-2.37,-0.47)
\lineto(-2.37,-0.47)
\lineto(-2.37,-0.47)
\lineto(-2.37,-0.47)
\lineto(-2.37,-0.47)
\lineto(-2.37,-0.47)
\lineto(-2.37,-0.47)
\lineto(-2.37,-0.47)
\lineto(-2.37,-0.47)
\lineto(-2.37,-0.47)
\moveto(-0.18,1.5)
\lineto(-0.18,1.5)
\lineto(-0.18,1.5)
\lineto(-0.18,1.5)
\lineto(-0.18,1.5)
\lineto(-0.18,1.5)
\lineto(-0.18,1.5)
\lineto(-0.18,1.5)
\lineto(-0.18,1.5)
\lineto(-0.18,1.5)
\lineto(-0.18,1.5)
\lineto(-0.18,1.5)
\lineto(-0.18,1.5)
\lineto(-0.18,1.5)
\lineto(-0.18,1.5)
\lineto(-0.18,1.5)
\lineto(-0.18,1.5)
\lineto(-0.18,1.49)
\lineto(-0.18,1.49)
\lineto(-0.18,1.49)
\lineto(-0.19,1.49)
\lineto(-0.2,1.48)
\lineto(-0.22,1.45)
\lineto(-0.27,1.42)
\lineto(-0.32,1.37)
\lineto(-0.36,1.33)
\lineto(-0.41,1.29)
\lineto(-0.45,1.25)
\lineto(-0.49,1.22)
\lineto(-0.53,1.18)
\lineto(-0.56,1.15)
\lineto(-0.6,1.12)
\lineto(-0.63,1.09)
\lineto(-0.66,1.06)
\lineto(-0.69,1.03)
\lineto(-0.73,1.01)
\lineto(-0.75,0.98)
\lineto(-0.78,0.96)
\lineto(-0.81,0.93)
\lineto(-0.84,0.91)
\lineto(-0.86,0.88)
\lineto(-0.89,0.86)
\lineto(-0.91,0.84)
\lineto(-0.94,0.82)
\lineto(-0.95,0.8)
}
\pscustom[linecolor=ccqqqq]{\moveto(-2.41,-0.5)
\lineto(-2.41,-0.5)
\lineto(-2.48,-0.56)
\lineto(-2.54,-0.62)
\lineto(-2.61,-0.68)
\lineto(-2.68,-0.74)
\lineto(-2.77,-0.82)
\lineto(-2.85,-0.9)
\lineto(-2.95,-0.98)
\lineto(-3,-1.03)
\lineto(-3.06,-1.08)
\lineto(-3.12,-1.13)
\lineto(-3.18,-1.18)
\lineto(-3.24,-1.24)
\lineto(-3.31,-1.3)
\lineto(-3.38,-1.37)
\lineto(-3.46,-1.43)
\lineto(-3.54,-1.51)
\lineto(-3.63,-1.58)
\lineto(-3.72,-1.67)
\lineto(-3.82,-1.75)
\moveto(0.83,2.39)
\lineto(0.76,2.33)
\lineto(0.69,2.27)
\lineto(0.62,2.21)
\lineto(0.56,2.16)
\lineto(0.51,2.11)
\lineto(0.45,2.06)
\lineto(0.4,2.01)
\lineto(0.35,1.97)
\lineto(0.26,1.88)
\lineto(0.17,1.81)
\lineto(0.09,1.74)
\lineto(0.02,1.67)
\lineto(-0.04,1.61)
\lineto(-0.11,1.56)
\lineto(-0.17,1.51)
\lineto(-0.17,1.5)
\lineto(-0.18,1.5)
\lineto(-0.18,1.5)
\lineto(-0.18,1.5)
\lineto(-0.18,1.5)
\lineto(-0.18,1.5)
\lineto(-0.18,1.5)
\lineto(-0.18,1.5)
\lineto(-0.18,1.5)
\lineto(-0.18,1.5)
\moveto(-2.37,-0.47)
\lineto(-2.37,-0.47)
\lineto(-2.37,-0.47)
\lineto(-2.37,-0.47)
\lineto(-2.37,-0.47)
\lineto(-2.37,-0.47)
\lineto(-2.37,-0.47)
\lineto(-2.37,-0.47)
\lineto(-2.37,-0.47)
\lineto(-2.37,-0.47)
\lineto(-2.37,-0.47)
\lineto(-2.37,-0.47)
\lineto(-2.37,-0.47)
\lineto(-2.38,-0.47)
\lineto(-2.38,-0.47)
\lineto(-2.38,-0.47)
\lineto(-2.38,-0.47)
\lineto(-2.38,-0.47)
\lineto(-2.38,-0.47)
\lineto(-2.38,-0.47)
\lineto(-2.39,-0.48)
\lineto(-2.4,-0.49)
\lineto(-2.41,-0.5)
\lineto(-2.41,-0.5)
}
\begin{scriptsize}
\psdots[dotstyle=*,linecolor=blue](0,0)
\rput[bl](-0.22,-0.28){\blue{$O$}}
\end{scriptsize}
\end{pspicture*}
	\end{center}
	\caption{Theorem \ref{elipsem}, Case $ii$. }
	\label{fig:elipsesinversioncaso4y5}
\end{figure}
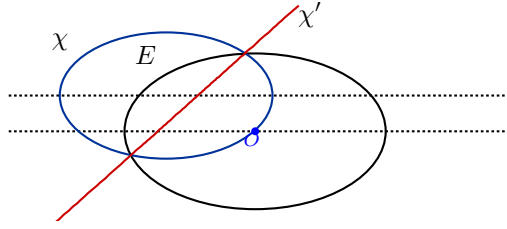

	\item If $\chi$ is orthogonal to $E$, then $\chi'$ is the ellipse itself.

\end{enumerate}
\end{theorem}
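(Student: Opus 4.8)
The plan is to push all three cases through one algebraic identity. Write $N(x,y)=\frac{x^{2}}{a^{2}}+\frac{y^{2}}{b^{2}}$ for the semi-form of $E$ and let $B$ be its polarization, $B\big((x_{1},y_{1}),(x_{2},y_{2})\big)=\frac{x_{1}x_{2}}{a^{2}}+\frac{y_{1}y_{2}}{b^{2}}$, so that $E=\{N=1\}$ and, dividing top and bottom in Theorem~\ref{coordenadaselipse}, the elliptic inversion becomes simply $\psi(\mathbf p)=\mathbf p/N(\mathbf p)$; hence $N(\psi(\mathbf p))=1/N(\mathbf p)$ and $B(\psi(\mathbf p),\mathbf c)=B(\mathbf p,\mathbf c)/N(\mathbf p)$. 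Since $\chi\sim E$, translating to the centre $\mathbf c$ of $\chi$ and using that $\chi$ has the same eccentricity and parallel principal axes shows that $\chi$ has an equation $N(\mathbf x-\mathbf c)=r^{2}$ with $r>0$. A point $\mathbf x$ lies on $\chi'$ iff $\psi(\mathbf x)\in\chi$; substituting $\psi(\mathbf x)$, expanding $N(\psi(\mathbf x)-\mathbf c)=N(\psi(\mathbf x))-2B(\psi(\mathbf x),\mathbf c)+N(\mathbf c)$ with the two displayed identities, and clearing the factor $N(\mathbf x)$ (legitimate for $\mathbf x\neq O$), yields
\[
(N(\mathbf c)-r^{2})\,N(\mathbf x)-2B(\mathbf x,\mathbf c)+1=0,
\]
which I will call $(\star)$. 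Set $k:=N(\mathbf c)-r^{2}$; since $O$ is the centre of $E$ we have $O\in\chi\iff N(\mathbf c)=r^{2}\iff k=0$, so the trichotomy of the theorem is exactly $k=0$, $k\neq0$, $k=1$. Note also that putting $\mathbf x=O$ in $(\star)$ gives $1=0$, so $O\notin\chi'$ in every case.

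For part (ii): if $k=0$ then $(\star)$ reads $2B(\mathbf x,\mathbf c)=1$, which is a line (and $\mathbf c\neq O$, because a nondegenerate ellipse cannot be centred at one of its own points). For part (i): if $k\neq0$, divide $(\star)$ by $k$ and complete the $N$-square, $N(\mathbf x)-\tfrac{2}{k}B(\mathbf x,\mathbf c)=N\big(\mathbf x-\tfrac{\mathbf c}{k}\big)-\tfrac{N(\mathbf c)}{k^{2}}$, so that $(\star)$ becomes $N\big(\mathbf x-\tfrac{\mathbf c}{k}\big)=\tfrac{N(\mathbf c)-k}{k^{2}}=\tfrac{r^{2}}{k^{2}}>0$; thus $\chi'$ is again of the form ``$N$ of a translate equals a positive constant'', i.e. an ellipse homothetic to $E$ with centre $\mathbf c/k$, and $O\notin\chi'$ by the remark above. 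This also records that $\chi'=\chi$ (as point sets, which forces the centres and the constants to agree) is equivalent to $\mathbf c/k=\mathbf c$ and $r^{2}/k^{2}=r^{2}$, i.e., since $\mathbf c\neq O$ and $r\neq 0$, to $k=1$; conversely, when $k=1$ equation $(\star)$ is $N(\mathbf x)-2B(\mathbf x,\mathbf c)+1=0$, which is precisely $N(\mathbf x-\mathbf c)=r^{2}$ rewritten using $N(\mathbf c)-r^{2}=1$. Hence $\chi'=\chi\iff k=1\iff N(\mathbf c)=r^{2}+1$.

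For part (iii) it remains to identify ``$\chi$ orthogonal to $E$'' with the condition $k=1$, and this is the one genuinely delicate point. The clean route is to transport the problem by the affinity $T=\operatorname{diag}(1/a,1/b)$: $T$ carries $E$ to the unit circle $C_{0}$, carries the homothetic ellipse $\chi$ to the circle of radius $r$ centred at $T\mathbf c$ (because $N(\mathbf x-\mathbf c)=|T(\mathbf x-\mathbf c)|^{2}$), and conjugates $\psi$ to the classical inversion $\iota$ in $C_{0}$, since $T\psi(\mathbf p)=T\mathbf p/N(\mathbf p)=T\mathbf p/|T\mathbf p|^{2}=\iota(T\mathbf p)$. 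Therefore $\psi(\chi)=\chi$ iff $\iota(T\chi)=T\chi$ iff the circle $T\chi$ is orthogonal to $C_{0}$ in the usual sense, i.e. iff $|T\mathbf c|^{2}-r^{2}=1$, i.e. iff $k=1$, matching the formula above; combining with part (i), $\chi'$ is then the ellipse itself. I expect the main obstacle to be exactly this translation: plain Euclidean perpendicularity of the tangents of $\chi$ and $E$ at their intersection points is \emph{not} equivalent to $k=1$ (one sees on examples that a $\psi$-invariant homothetic $\chi$ can meet $E$ at non-right angles, because $T$ is not conformal), so ``orthogonal'' in (iii) must be read as orthogonality of the image circles under $T$ — equivalently, the condition that at each common point $\mathbf x_{0}$ the tangent to $\chi$ pass through $O$, which by a short computation using $N(\mathbf x_{0})=1$ and $N(\mathbf x_{0}-\mathbf c)=r^{2}$ again collapses to $k=1$. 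Once that notion is pinned down, parts (i)--(iii) all drop out of $(\star)$ with only the routine expansions above.
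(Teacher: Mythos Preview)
Your core computation---substitute $\psi$ into the equation of $\chi$ and simplify---is exactly what the paper does; your quadratic-form packaging $N,B$ and the observation $\psi(\mathbf p)=\mathbf p/N(\mathbf p)$ make it cleaner, but for parts (i) and (ii) the two arguments are the same in different dress: the paper writes $\chi$ as $\tfrac{x^2}{a^2}+\tfrac{y^2}{b^2}+Dx+Ey+F=0$ and obtains $\tfrac{x^2}{a^2}+\tfrac{y^2}{b^2}+\tfrac{D}{F}x+\tfrac{E}{F}y+\tfrac{1}{F}=0$, which is precisely your $(\star)$ with $F=k$.

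Where you genuinely add something is part (iii). The paper dispatches it with ``proof runs like in $i$'', which at best yields $\chi'=\chi\iff k=1$ without ever linking $k=1$ to a notion of orthogonality. Your conjugation by $T=\operatorname{diag}(1/a,1/b)$, reducing elliptic inversion to classical circle inversion, is the right way to make that link---and your caveat is on target. The paper earlier \emph{defines} orthogonality of ellipses as Euclidean perpendicularity of tangents (Corollary~1); at an intersection point $\mathbf x_0$ that reads $\tfrac{x_0(x_0-c_1)}{a^4}+\tfrac{y_0(y_0-c_2)}{b^4}=0$, whereas $k=1$ is equivalent to $B(\mathbf x_0,\mathbf x_0-\mathbf c)=\tfrac{x_0(x_0-c_1)}{a^2}+\tfrac{y_0(y_0-c_2)}{b^2}=0$, and these agree only when $a=b$. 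So you have not merely filled a gap the paper leaves but correctly flagged that statement (iii), read against the paper's own definition of ``orthogonal'', requires reinterpretation.
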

\begin{proof}
\emph{i.} Let $\chi$ be the ellipse $\frac{x^2}{a^2} + \frac{y^2}{b^2} + Dx + Ey + F=0 \ (F\neq 0)$. Applying $\psi$ to this equation gives $\frac{x^2}{a^2} + \frac{y^2}{b^2} + \frac{D}{F}x + \frac{E}{F}y + \frac{1}{F}=0$. Indeed
\begin{align*}
\frac{x^2}{a^2} + \frac{y^2}{b^2} + Dx + Ey + F=&0\\
\frac{\left(\frac{a^2b^2x}{b^2x^2+a^2y^2}\right)^2}{a^2} + \frac{\left(\frac{a^2b^2y}{b^2x^2+a^2y^2}\right)^2}{b^2} + D\left(\frac{a^2b^2x}{b^2x^2+a^2y^2}\right) + E\left(\frac{a^2b^2y}{b^2x^2+a^2y^2}\right) + F=&0\\
a^2b^4x^2 + a^4b^2y^2 + Da^2b^2 x (b^2x^2 + a^2y^2) + Ea^2b^2 x (b^2x^2 + a^2y^2) + F(b^2x^2 + a^2y^2)^2=&0\\
\frac{x^2}{a^2} + \frac{y^2}{b^2} + Dx\left(\frac{x^2}{a^2} + \frac{y^2}{b^2}\right) + Ey\left(\frac{x^2}{a^2} + \frac{y^2}{b^2}\right) + F\left(\frac{x^2}{a^2} + \frac{y^2}{b^2}\right)^2 =&0\\
\left(\frac{x^2}{a^2} + \frac{y^2}{b^2}\right)\left(1 +  Dx + Ey + F\left(\frac{x^2}{a^2} + \frac{y^2}{b^2}\right)\right) =&0\\
\frac{x^2}{a^2} + \frac{y^2}{b^2} + \frac{D}{F}x + \frac{E}{F}y + \frac{1}{F}=&0
\end{align*}
\emph{ii} and \emph{iii}  proof run like in $i$.
\end{proof}

\subsection{Elliptic Inversion of Other Curves}

\begin{theorem}\label{curves} The inverse of any conic not of the same semi-form as the central conic of inversion and passing through the center of inversion is a cubic curve.
\end{theorem}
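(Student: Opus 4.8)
The plan is to compute the equation of the inverse curve directly in Cartesian coordinates, exactly as in the proofs of Theorems \ref{invrelipse} and \ref{elipsem}, and then inspect its degree. Choose coordinates so that the center of inversion is the origin and the central conic of inversion is $E:\ \frac{x^2}{a^2}+\frac{y^2}{b^2}=1$, and write the given conic as $\mathcal C:\ Ax^2+Bxy+Cy^2+Dx+Ey+F=0$. The hypothesis that $\mathcal C$ passes through the center of inversion means $(0,0)\in\mathcal C$, i.e.\ $F=0$. Since $\psi$ is an involution, a point $(x,y)$ lies on $\psi(\mathcal C)$ exactly when $\psi(x,y)$ lies on $\mathcal C$; so I would substitute the coordinates of $\psi(x,y)$ given by Theorem \ref{coordenadaselipse}, namely $\dfrac{a^2b^2x}{b^2x^2+a^2y^2}$ and $\dfrac{a^2b^2y}{b^2x^2+a^2y^2}$, in place of $x$ and $y$ in the equation of $\mathcal C$, clear the common denominator $(b^2x^2+a^2y^2)^2$, and cancel the overall factor $a^2b^2$. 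This leaves
\begin{equation}\label{eq:cubicimage}
a^2b^2\bigl(Ax^2+Bxy+Cy^2\bigr)+\bigl(b^2x^2+a^2y^2\bigr)\bigl(Dx+Ey\bigr)=0 ,
\end{equation}
which is exactly the same kind of cancellation already used to produce the line and the ellipse in Theorems \ref{invrelipse} and \ref{elipsem}.

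Next I would read off the degree of \eqref{eq:cubicimage}. Its homogeneous component of degree three is $(b^2x^2+a^2y^2)(Dx+Ey)$, its component of degree two is $a^2b^2(Ax^2+Bxy+Cy^2)$, and there is no component of degree one or zero. Since $\mathcal C$ is a proper conic passing through $O$, the origin must be a smooth point of $\mathcal C$, which forces $(D,E)\neq(0,0)$ (if $D=E=F=0$, then $O$ is a singular point and $\mathcal C$ is degenerate). As $a,b\neq 0$ as well, the cubic form $(b^2x^2+a^2y^2)(Dx+Ey)$ is a nonzero polynomial, so \eqref{eq:cubicimage} has degree exactly three.

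The remaining point is to make sure this degree-three equation does not secretly describe a line, as happens in Theorem \ref{elipsem}(ii), where the factor $b^2x^2+a^2y^2$---whose only real zero is $O$---splits off and leaves a linear equation. This is exactly where the hypothesis is used. The left-hand side of \eqref{eq:cubicimage} is divisible by $b^2x^2+a^2y^2$ if and only if $b^2x^2+a^2y^2$ divides $a^2b^2(Ax^2+Bxy+Cy^2)$, i.e.\ if and only if the quadratic part of $\mathcal C$ equals $\lambda(b^2x^2+a^2y^2)$ for some scalar $\lambda$ (so in particular $B=0$ and $Aa^2=Cb^2$). But $b^2x^2+a^2y^2$ is positive definite, so if the quadratic part of $\mathcal C$ were such a multiple, $\mathcal C$ would be an ellipse with the same axis directions as $E$, the same eccentricity as $E$, and principal axes parallel to those of $E$---that is, $\mathcal C$ would be homothetic to $E$, in particular of the same semi-form as $E$, contradicting the hypothesis. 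Hence $b^2x^2+a^2y^2$ does not divide the left-hand side of \eqref{eq:cubicimage}, the locus does not collapse to a line, and $\psi(\mathcal C)$ is a genuine cubic curve.

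The substitution and clearing of denominators that produce \eqref{eq:cubicimage} are routine and of the same flavour as the computations already done for lines and for homothetic ellipses, so I expect no obstacle there. The one step that requires care is the last one: the raw computation only yields a polynomial of degree at most three, and one has to invoke the geometric hypothesis to certify that its zero set is actually a cubic and not a disguised line. The heart of the matter is the divisibility criterion above, together with the observation that $\mathcal C$ being of the same semi-form as $E$ is equivalent to the quadratic part of $\mathcal C$ being proportional to $b^2x^2+a^2y^2$; one should also keep the mild properness assumption on $\mathcal C$, which is what guarantees $(D,E)\neq(0,0)$ and hence that the degree does not drop to two.
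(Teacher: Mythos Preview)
Your proposal is correct and follows essentially the same route as the paper: substitute the coordinate formulas from Theorem~\ref{coordenadaselipse} into the equation of the conic (with $F=0$), clear denominators, and read off a cubic. The paper's own proof is a one-line computation displaying the resulting cubic polynomial; your version is more careful, explicitly verifying that $(D,E)\neq(0,0)$ so the degree does not drop to two, and using the ``not of the same semi-form'' hypothesis to rule out divisibility by $b^2x^2+a^2y^2$ and hence collapse to a line---points the paper leaves implicit.
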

\begin{proof}
Let $\chi$ be the conic $Ax^2 + Bxy + Cy^2 + Dx + Ey =0,  (A=1/a^2, B=0$ and $C=1/b^2$ cannot hold simultaneously). Applying $\psi$  to this equation, we  have
\begin{align*}
Aa^4b^4x^2 +  Ba^4b^4xy + Ca^4b^4y^2 + Db^2x^3 + Da^2xy^2 + Eb^2x^2y + Ea^2y^3&=0
\end{align*}
\end{proof}

\begin{theorem} The inverse of any conic not of the same semi-form as the central conic of inversion and not passing through the center of inversion is a curve of the fourth degree.
\end{theorem}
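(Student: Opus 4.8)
The plan is to run the same substitution used in the proof of Theorem~\ref{curves}, but now keeping the constant term of the conic, and then to locate exactly where each hypothesis is needed: the assumption that $\chi$ does not pass through the centre will keep the equation from dropping to degree three, and the assumption on the semi-form will keep it from dropping to degree two.

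Put $E$ in the position $\frac{x^2}{a^2}+\frac{y^2}{b^2}=1$ and write the given conic $\chi$ as $Ax^2+Bxy+Cy^2+Dx+Ey+F=0$. Since $\chi$ does not pass through the centre of inversion $O=(0,0)$, we have $F\neq 0$. Applying $\psi$ through the coordinate formulas of Theorem~\ref{coordenadaselipse}, that is $x\mapsto \frac{a^2b^2x}{b^2x^2+a^2y^2}$ and $y\mapsto \frac{a^2b^2y}{b^2x^2+a^2y^2}$, and multiplying through by $(b^2x^2+a^2y^2)^2$, the inverse curve $\chi'$ lies on the locus $P(x,y)=0$, where
\[
P(x,y)=a^4b^4(Ax^2+Bxy+Cy^2)+a^2b^2(Dx+Ey)(b^2x^2+a^2y^2)+F(b^2x^2+a^2y^2)^2 .
\]
The homogeneous component of $P$ of degree four is $F(b^2x^2+a^2y^2)^2$, which is not the zero polynomial precisely because $F\neq 0$; hence $P$ has degree four. (If one allowed $F=0$, this component would vanish and one would recover the cubic of Theorem~\ref{curves}.)

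It remains to check that $\chi'$ is \emph{genuinely} a quartic, i.e.\ that $P$ carries no extraneous factor forcing the curve to be of lower degree, as happens in Theorem~\ref{elipsem}(i). Over the reals the only quadratic that the substitution can bring in as a spurious factor is $b^2x^2+a^2y^2$ (whose real zero set is the single point $O$); a linear factor would merely split $\chi'$ into components without lowering the degree of its defining polynomial. So suppose $b^2x^2+a^2y^2$ divides $P$, say $P=(b^2x^2+a^2y^2)\,Q$ with $\deg Q=2$. Comparing the degree-two homogeneous parts of both sides gives $a^4b^4(Ax^2+Bxy+Cy^2)=c\,(b^2x^2+a^2y^2)$, where $c$ is the constant term of $Q$; matching coefficients forces $B=0$ and $Cb^2=Aa^2$, so the quadratic part of $\chi$ equals the scalar multiple $Aa^2\!\left(\frac{x^2}{a^2}+\frac{y^2}{b^2}\right)$ of that of $E$. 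Then $\chi$ has the same axes and the same eccentricity as $E$, contradicting the hypothesis that $\chi$ is not of the same semi-form as $E$. Hence no cancellation occurs and $\chi'$ is a curve of the fourth degree. The step I expect to be the crux is exactly this last one: translating ``not of the same semi-form'' into the algebraic assertion that $Ax^2+Bxy+Cy^2$ is not proportional to $\frac{x^2}{a^2}+\frac{y^2}{b^2}$, and verifying that this proportionality is precisely the obstruction to the factor $b^2x^2+a^2y^2$ appearing.
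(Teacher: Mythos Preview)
Your argument is correct and follows the same substitution route the paper intends: the paper's own proof is literally the single line ``Similar to Theorem~\ref{curves}'', so you have in fact supplied the details the paper omits. In particular, your computation of
\[
P(x,y)=a^{4}b^{4}(Ax^{2}+Bxy+Cy^{2})+a^{2}b^{2}(Dx+Ey)(b^{2}x^{2}+a^{2}y^{2})+F(b^{2}x^{2}+a^{2}y^{2})^{2}
\]
is exactly what the substitution of Theorem~\ref{coordenadaselipse} produces after clearing denominators, and the identification of $F(b^{2}x^{2}+a^{2}y^{2})^{2}$ as the top-degree part is the right way to see why $F\neq 0$ forces degree four.

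Where you go beyond the paper is in the irreducibility check: the paper never explains why the ``not of the same semi-form'' hypothesis is needed, whereas you isolate it as precisely the condition preventing $b^{2}x^{2}+a^{2}y^{2}$ from dividing $P$. Your comparison of degree-two homogeneous parts is the cleanest way to do this; note for completeness that the case $c=0$ would force $A=B=C=0$, so that $\chi$ would be a line rather than a conic, which is already excluded. You might also make explicit that any spurious factor introduced by clearing the denominator $(b^{2}x^{2}+a^{2}y^{2})^{2}$ must itself divide this quantity, hence (over $\mathbb{R}$) must be a power of $b^{2}x^{2}+a^{2}y^{2}$; this is implicit in your phrase ``the only quadratic that the substitution can bring in'', and justifies why no other factor need be considered.
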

\begin{proof}
Similar to Theorem \ref{curves}.
\end{proof}
\begin{example}
In Figure \ref{fig:CircunInvElipse}, we show the elliptic inverse  of a circumference $\mathcal{\chi}$.
\end{example}

\begin{figure}[h]
	\begin{center}
	\newrgbcolor{qqttzz}{0 0.2 0.6}
\newrgbcolor{ccqqqq}{0.8 0 0}
\psset{xunit=0.7cm,yunit=0.7cm,algebraic=true,dotstyle=o,dotsize=3pt 0,linewidth=0.8pt,arrowsize=3pt 2,arrowinset=0.25}
\begin{pspicture*}(-4.92,-1.68)(3.02,3.92)
\rput{0}(0,0){\psellipse(0,0)(2.5,1.5)}
\rput[tl](2.14,1.5){$E$}
\pscircle[linecolor=qqttzz](-2.8,1.96){1.2}
\pscustom[linecolor=ccqqqq]{\moveto(-1.19,1.3)
\lineto(-1.19,1.3)
\lineto(-1.13,1.29)
\lineto(-1.07,1.27)
\lineto(-1.02,1.26)
\lineto(-0.97,1.24)
\lineto(-0.87,1.21)
\lineto(-0.79,1.17)
\lineto(-0.72,1.13)
\lineto(-0.66,1.09)
\lineto(-0.61,1.06)
\lineto(-0.56,1.02)
\lineto(-0.52,0.99)
\lineto(-0.49,0.95)
\lineto(-0.46,0.92)
\lineto(-0.44,0.89)
\lineto(-0.42,0.87)
\lineto(-0.4,0.84)
\lineto(-0.38,0.81)
\lineto(-0.37,0.79)
\lineto(-0.36,0.77)
\lineto(-0.35,0.75)
\lineto(-0.34,0.73)
\lineto(-0.34,0.71)
\lineto(-0.33,0.69)
\lineto(-0.33,0.68)
\lineto(-0.33,0.66)
\lineto(-0.33,0.65)
\lineto(-0.33,0.63)
\lineto(-0.33,0.62)
\lineto(-0.33,0.61)
\lineto(-0.33,0.6)
\lineto(-0.33,0.59)
\lineto(-0.33,0.58)
\lineto(-0.34,0.57)
\lineto(-0.34,0.56)
\lineto(-0.35,0.55)
\lineto(-0.35,0.54)
\lineto(-0.36,0.54)
\lineto(-0.36,0.53)
\lineto(-0.37,0.52)
\lineto(-0.37,0.52)
\lineto(-0.38,0.51)
\lineto(-0.39,0.51)
\lineto(-0.4,0.5)
\lineto(-0.41,0.5)
\lineto(-0.41,0.49)
\lineto(-0.42,0.49)
\lineto(-0.43,0.48)
\lineto(-0.44,0.48)
\lineto(-0.45,0.48)
\lineto(-0.47,0.47)
\lineto(-0.48,0.47)
\lineto(-0.49,0.47)
\lineto(-0.5,0.46)
\lineto(-0.52,0.46)
\lineto(-0.53,0.46)
\lineto(-0.54,0.46)
\lineto(-0.56,0.45)
\lineto(-0.58,0.45)
\lineto(-0.59,0.45)
\lineto(-0.61,0.44)
\lineto(-0.63,0.44)
\lineto(-0.64,0.44)
\lineto(-0.66,0.44)
\lineto(-0.68,0.43)
\lineto(-0.7,0.43)
\lineto(-0.72,0.43)
\lineto(-0.75,0.42)
\lineto(-0.77,0.42)
\lineto(-0.79,0.42)
\lineto(-0.81,0.41)
\lineto(-0.84,0.41)
\lineto(-0.86,0.4)
\lineto(-0.89,0.4)
\lineto(-0.9,0.4)
\lineto(-0.91,0.4)
\lineto(-0.91,0.4)
\lineto(-0.91,0.4)
\lineto(-0.91,0.4)
\lineto(-0.91,0.4)
\lineto(-0.91,0.4)
\lineto(-0.91,0.4)
\lineto(-0.91,0.4)
\lineto(-0.91,0.4)
\lineto(-0.91,0.4)
\lineto(-0.91,0.4)
\lineto(-0.91,0.4)
\lineto(-0.91,0.4)
\lineto(-0.91,0.4)
\lineto(-0.91,0.4)
\lineto(-0.91,0.4)
\lineto(-0.91,0.4)
\lineto(-0.91,0.4)
\lineto(-0.91,0.4)
\lineto(-0.91,0.4)
\lineto(-0.91,0.4)
\lineto(-0.91,0.4)
\lineto(-0.91,0.4)
\lineto(-0.91,0.4)
\lineto(-0.91,0.4)
\lineto(-0.91,0.4)
\lineto(-0.91,0.4)
\lineto(-0.91,0.4)
\lineto(-0.91,0.39)
\lineto(-0.91,0.39)
\lineto(-0.92,0.39)
\lineto(-0.93,0.39)
\lineto(-0.96,0.39)
\lineto(-0.98,0.38)
\lineto(-1.01,0.37)
\lineto(-1.04,0.37)
\lineto(-1.07,0.36)
\lineto(-1.1,0.35)
\lineto(-1.14,0.34)
\lineto(-1.17,0.34)
\lineto(-1.2,0.33)
\lineto(-1.23,0.32)
\lineto(-1.27,0.31)
\lineto(-1.3,0.3)
\lineto(-1.34,0.29)
\lineto(-1.37,0.28)
\lineto(-1.41,0.27)
\lineto(-1.45,0.26)
\lineto(-1.49,0.25)
\lineto(-1.52,0.24)
\lineto(-1.56,0.23)
\lineto(-1.61,0.22)
\lineto(-1.65,0.21)
\lineto(-1.69,0.2)
\lineto(-1.74,0.2)
\lineto(-1.78,0.19)
\lineto(-1.83,0.18)
\lineto(-1.88,0.18)
\lineto(-1.93,0.18)
\lineto(-1.99,0.18)
\lineto(-2.05,0.18)
\lineto(-2.1,0.19)
\lineto(-2.16,0.19)
\lineto(-2.23,0.21)
\lineto(-2.29,0.22)
\lineto(-2.36,0.25)
\lineto(-2.42,0.27)
\lineto(-2.44,0.28)
\lineto(-2.45,0.29)
\lineto(-2.45,0.29)
\lineto(-2.45,0.29)
\lineto(-2.45,0.29)
\lineto(-2.45,0.29)
\lineto(-2.45,0.29)
\lineto(-2.45,0.29)
\lineto(-2.45,0.29)
\lineto(-2.45,0.29)
\moveto(-1.22,1.31)
\lineto(-1.22,1.31)
\lineto(-1.22,1.31)
\lineto(-1.22,1.31)
\lineto(-1.22,1.31)
\lineto(-1.22,1.31)
\lineto(-1.22,1.31)
\lineto(-1.22,1.31)
\lineto(-1.22,1.31)
\lineto(-1.22,1.31)
\lineto(-1.22,1.31)
\lineto(-1.22,1.31)
\lineto(-1.22,1.31)
\lineto(-1.22,1.31)
\lineto(-1.22,1.31)
\lineto(-1.22,1.31)
\lineto(-1.22,1.31)
\lineto(-1.22,1.31)
\lineto(-1.21,1.31)
\lineto(-1.21,1.31)
\lineto(-1.19,1.3)
\lineto(-1.19,1.3)
}
\pscustom[linecolor=ccqqqq]{\moveto(-2.6,1.01)
\lineto(-2.5,1.09)
\lineto(-2.45,1.13)
\lineto(-2.38,1.17)
\lineto(-2.32,1.2)
\lineto(-2.24,1.23)
\lineto(-2.17,1.26)
\lineto(-2.09,1.28)
\lineto(-2.01,1.3)
\lineto(-1.93,1.32)
\lineto(-1.85,1.33)
\lineto(-1.77,1.34)
\lineto(-1.69,1.34)
\lineto(-1.61,1.34)
\lineto(-1.53,1.34)
\lineto(-1.46,1.34)
\lineto(-1.39,1.33)
\lineto(-1.32,1.32)
\lineto(-1.25,1.31)
\lineto(-1.23,1.31)
\lineto(-1.23,1.31)
\lineto(-1.22,1.31)
\lineto(-1.22,1.31)
\lineto(-1.22,1.31)
\lineto(-1.22,1.31)
\lineto(-1.22,1.31)
\lineto(-1.22,1.31)
\lineto(-1.22,1.31)
\moveto(-2.45,0.29)
\lineto(-2.45,0.29)
\lineto(-2.45,0.29)
\lineto(-2.45,0.29)
\lineto(-2.45,0.29)
\lineto(-2.45,0.29)
\lineto(-2.45,0.29)
\lineto(-2.45,0.29)
\lineto(-2.45,0.29)
\lineto(-2.45,0.29)
\lineto(-2.45,0.29)
\lineto(-2.45,0.29)
\lineto(-2.45,0.29)
\lineto(-2.45,0.29)
\lineto(-2.45,0.29)
\lineto(-2.45,0.29)
\lineto(-2.45,0.29)
\lineto(-2.46,0.29)
\lineto(-2.46,0.29)
\lineto(-2.46,0.29)
\lineto(-2.47,0.3)
\lineto(-2.48,0.3)
\lineto(-2.51,0.32)
\lineto(-2.56,0.36)
\lineto(-2.62,0.41)
\lineto(-2.67,0.47)
\lineto(-2.71,0.54)
\lineto(-2.74,0.61)
\lineto(-2.75,0.7)
\lineto(-2.73,0.79)
\lineto(-2.7,0.88)
\lineto(-2.64,0.97)
\lineto(-2.6,1.01)
}
\rput[tl](-4.78,3.26){$\chi$}
\rput[tl](-2.94,1.58){$\chi'$}
\begin{scriptsize}
\psdots[dotstyle=*,linecolor=blue](0,0)
\rput[bl](-0.22,-0.35){\blue{$O$}}
\psdots[dotstyle=*,linecolor=blue](-2.8,1.96)
\end{scriptsize}
\end{pspicture*}
	\end{center}
	\caption{Inversion in an Ellipse  of a  Circumference.}
	\label{fig:CircunInvElipse}
\end{figure}
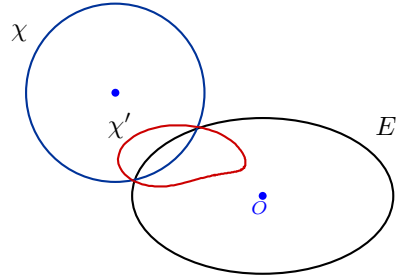

\begin{example}
In Figure \ref{fig:ParabolasInvEliptica}, we show the elliptic inverse   of a parabola $\chi$.
\end{example}

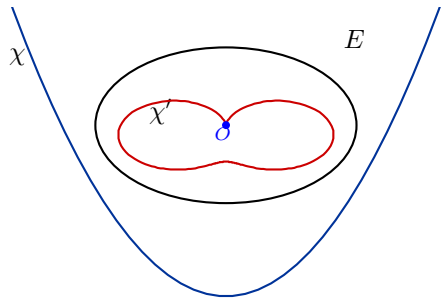
\begin{figure}[h]
\begin{center}
\newrgbcolor{qqttzz}{0 0.2 0.6}
\newrgbcolor{ccqqqq}{0.8 0 0}
\psset{xunit=0.7cm,yunit=0.7cm,algebraic=true,dotstyle=o,dotsize=3pt 0,linewidth=0.8pt,arrowsize=3pt 2,arrowinset=0.25}
\begin{pspicture*}(-4.42,-3.52)(4.6,2.24)
\rput{0}(0,0){\psellipse(0,0)(2.5,1.5)}
\rput[tl](2.24,1.82){$E$}
\rput[tl](-4.12,1.46){$\chi$}
\rput[tl](-1.46,0.5){$\chi'$}
\psplot{-4.42}{4.6}{(-4-0*x)/1}
\rput{0}(0,-3.25){\psplot[linecolor=qqttzz]{-6}{6}{x^2/2/1.5}}
\pscustom[linecolor=ccqqqq]{\moveto(-0.12,-0.71)
\lineto(-0.13,-0.71)
\lineto(-0.14,-0.71)
\lineto(-0.15,-0.71)
\lineto(-0.17,-0.72)
\lineto(-0.18,-0.72)
\lineto(-0.2,-0.72)
\lineto(-0.21,-0.73)
\lineto(-0.23,-0.73)
\lineto(-0.26,-0.74)
\lineto(-0.28,-0.75)
\lineto(-0.31,-0.75)
\lineto(-0.34,-0.76)
\lineto(-0.37,-0.77)
\lineto(-0.41,-0.78)
\lineto(-0.46,-0.79)
\lineto(-0.52,-0.8)
\lineto(-0.58,-0.82)
\lineto(-0.67,-0.83)
\lineto(-0.71,-0.83)
\lineto(-0.77,-0.84)
\lineto(-0.83,-0.84)
\lineto(-0.89,-0.84)
\lineto(-0.97,-0.84)
\lineto(-1.05,-0.84)
\lineto(-1.14,-0.83)
\lineto(-1.19,-0.82)
\lineto(-1.24,-0.82)
\lineto(-1.3,-0.81)
\lineto(-1.35,-0.79)
\lineto(-1.41,-0.78)
\lineto(-1.47,-0.76)
\lineto(-1.53,-0.74)
\lineto(-1.6,-0.71)
\lineto(-1.66,-0.68)
\lineto(-1.73,-0.64)
\lineto(-1.79,-0.6)
\lineto(-1.85,-0.56)
\lineto(-1.9,-0.51)
\lineto(-1.95,-0.45)
\lineto(-1.99,-0.39)
\lineto(-2.02,-0.32)
\lineto(-2.04,-0.25)
\lineto(-2.04,-0.17)
\lineto(-2.04,-0.1)
\lineto(-2.01,-0.02)
\lineto(-1.97,0.05)
\lineto(-1.92,0.12)
\lineto(-1.85,0.19)
\lineto(-1.78,0.25)
\lineto(-1.69,0.3)
\lineto(-1.6,0.34)
\lineto(-1.5,0.38)
\lineto(-1.4,0.41)
\lineto(-1.3,0.43)
\lineto(-1.21,0.45)
\lineto(-1.11,0.46)
\lineto(-1.02,0.47)
\lineto(-0.94,0.47)
\lineto(-0.86,0.46)
\lineto(-0.78,0.46)
\lineto(-0.71,0.45)
\lineto(-0.65,0.43)
\lineto(-0.59,0.42)
\lineto(-0.54,0.41)
\lineto(-0.49,0.39)
\lineto(-0.4,0.36)
\lineto(-0.33,0.33)
\lineto(-0.27,0.3)
\lineto(-0.19,0.25)
\lineto(-0.13,0.2)
\lineto(-0.09,0.16)
\lineto(-0.06,0.12)
\lineto(-0.04,0.1)
\lineto(-0.02,0.07)
\lineto(-0.01,0.05)
\lineto(-0.01,0.04)
\lineto(0,0.03)
\lineto(0,0.02)
\lineto(0,0.01)
\lineto(0,0)
\lineto(0,0)
\lineto(0,0)
\lineto(0,0)
\lineto(0,0)
\lineto(0,0)
\lineto(0,0)
\lineto(0,0)
\lineto(0,0)
\lineto(0,0)
\lineto(0,0)
\lineto(0,0)
\lineto(0,0)
\lineto(0,0)
\lineto(0,0)
\lineto(0,0)
\lineto(0,0)
\lineto(0,0)
\lineto(0,0)
\lineto(0,0)
\lineto(0,0)
\lineto(0,0)
\lineto(0,0)
\lineto(0,0)
\lineto(0,0)
\lineto(0,0)
\lineto(0,0)
\lineto(0,0)
\lineto(0,0)
\lineto(0,0)
\lineto(0,0)
\lineto(0,0)
\lineto(0,0)
\lineto(0,0)
\lineto(0,0)
\lineto(0,0)
\lineto(0,0)
\lineto(0,0.01)
\lineto(0,0.02)
\lineto(0,0.03)
\lineto(0.01,0.04)
\lineto(0.01,0.05)
\lineto(0.02,0.07)
\lineto(0.04,0.1)
\lineto(0.06,0.12)
\lineto(0.09,0.16)
\lineto(0.13,0.2)
\lineto(0.19,0.25)
\lineto(0.27,0.3)
\lineto(0.33,0.33)
\lineto(0.4,0.36)
\lineto(0.49,0.39)
\lineto(0.54,0.41)
\lineto(0.59,0.42)
\lineto(0.65,0.43)
\lineto(0.71,0.45)
\lineto(0.78,0.46)
\lineto(0.86,0.46)
\lineto(0.94,0.47)
\lineto(1.02,0.47)
\lineto(1.11,0.46)
\lineto(1.21,0.45)
\lineto(1.3,0.43)
\lineto(1.4,0.41)
\lineto(1.5,0.38)
\lineto(1.6,0.34)
\lineto(1.69,0.3)
\lineto(1.78,0.25)
\lineto(1.85,0.19)
\lineto(1.92,0.12)
\lineto(1.97,0.05)
\lineto(2.01,-0.02)
\lineto(2.04,-0.1)
\lineto(2.04,-0.17)
\lineto(2.04,-0.25)
\lineto(2.02,-0.32)
\lineto(1.99,-0.39)
\lineto(1.95,-0.45)
\lineto(1.9,-0.51)
\lineto(1.85,-0.56)
\lineto(1.79,-0.6)
\lineto(1.73,-0.64)
\lineto(1.66,-0.68)
\lineto(1.6,-0.71)
\lineto(1.53,-0.74)
\lineto(1.47,-0.76)
\lineto(1.41,-0.78)
\lineto(1.35,-0.79)
\lineto(1.3,-0.81)
\lineto(1.24,-0.82)
\lineto(1.19,-0.82)
\lineto(1.14,-0.83)
\lineto(1.05,-0.84)
\lineto(0.97,-0.84)
\lineto(0.89,-0.84)
\lineto(0.83,-0.84)
\lineto(0.77,-0.84)
\lineto(0.71,-0.83)
\lineto(0.67,-0.83)
\lineto(0.58,-0.82)
\lineto(0.52,-0.8)
\lineto(0.46,-0.79)
\lineto(0.41,-0.78)
\lineto(0.37,-0.77)
\lineto(0.34,-0.76)
\lineto(0.31,-0.75)
\lineto(0.28,-0.75)
\lineto(0.26,-0.74)
\lineto(0.23,-0.73)
\lineto(0.21,-0.73)
\lineto(0.2,-0.72)
\lineto(0.18,-0.72)
\lineto(0.17,-0.72)
\lineto(0.15,-0.71)
\lineto(0.14,-0.71)
\lineto(0.13,-0.71)
\lineto(0.12,-0.71)
\lineto(0.11,-0.7)
\lineto(0.1,-0.7)
\lineto(0.09,-0.7)
\lineto(0.08,-0.7)
\lineto(0.07,-0.7)
\lineto(0.06,-0.7)
\lineto(0.06,-0.7)
\lineto(0.05,-0.69)
\lineto(0.04,-0.69)
\lineto(0.04,-0.69)
\lineto(0.03,-0.69)
\lineto(0.03,-0.69)
\lineto(0.02,-0.69)
\lineto(0.01,-0.69)
\lineto(0.01,-0.69)
\lineto(0,-0.69)
\lineto(0,-0.69)
\lineto(-0.01,-0.69)
\lineto(-0.01,-0.69)
\lineto(-0.02,-0.69)
\lineto(-0.02,-0.69)
\lineto(-0.03,-0.69)
\lineto(-0.04,-0.69)
\lineto(-0.04,-0.69)
\lineto(-0.05,-0.69)
\lineto(-0.06,-0.7)
\lineto(-0.06,-0.7)
\lineto(-0.07,-0.7)
\lineto(-0.08,-0.7)
\lineto(-0.09,-0.7)
\lineto(-0.1,-0.7)
\lineto(-0.1,-0.7)
\lineto(-0.11,-0.7)
\lineto(-0.12,-0.71)
}
\begin{scriptsize}
\psdots[dotstyle=*,linecolor=blue](0,0)
\rput[bl](-0.22,-0.28){\blue{$O$}}
\psdots[dotstyle=*,linecolor=blue](0,-4)
\rput[bl](0.08,-3.88){\blue{$B$}}
\end{scriptsize}
\end{pspicture*}
	\end{center}
	\caption{Inversion in an Ellipse of a Parabola.}
	\label{fig:ParabolasInvEliptica}
\end{figure}

\begin{example}
In Figure  \ref{fig:hiperbolainversioncaso1}, we show the elliptic inverse  of an hyperbola  $\chi$.
\end{example}

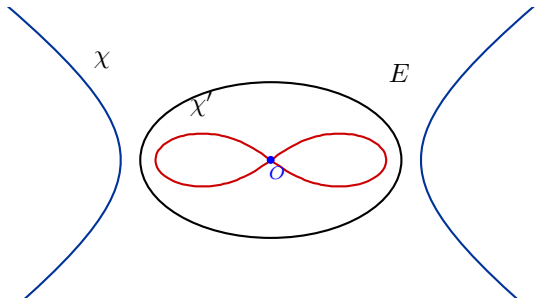
\begin{figure}[h]
	\begin{center}
		\newrgbcolor{qqttzz}{0 0.2 0.6}
\newrgbcolor{ccqqqq}{0.8 0 0}
\psset{xunit=0.7cm,yunit=0.7cm,algebraic=true,dotstyle=o,dotsize=3pt 0,linewidth=0.8pt,arrowsize=3pt 2,arrowinset=0.25}
\begin{pspicture*}(-6.48,-2.62)(6.86,2.9)
\rput{0}(0,0){\psellipse(0,0)(2.5,1.5)}
\rput[tl](2.24,1.82){$E$}
\rput[tl](-3.36,2.06){$\chi$}
\rput[tl](-1.54,1.3){$\chi'$}
\rput{0}(0,0){\parametricplot[linecolor=qqttzz]{-0.99}{0.99}{2.85*(1+t^2)/(1-t^2)|2.03*2*t/(1-t^2)}}
\rput{0}(0,0){\parametricplot[linecolor=qqttzz]{-0.99}{0.99}{2.85*(-1-t^2)/(1-t^2)|2.03*(-2)*t/(1-t^2)}}
\pscustom[linecolor=ccqqqq]{\moveto(-1.27,0.5)
\lineto(-1.2,0.5)
\lineto(-1.13,0.49)
\lineto(-1.07,0.49)
\lineto(-1,0.48)
\lineto(-0.93,0.46)
\lineto(-0.87,0.45)
\lineto(-0.8,0.43)
\lineto(-0.74,0.41)
\lineto(-0.68,0.39)
\lineto(-0.63,0.37)
\lineto(-0.57,0.35)
\lineto(-0.52,0.32)
\lineto(-0.42,0.27)
\lineto(-0.33,0.22)
\lineto(-0.25,0.17)
\lineto(-0.18,0.13)
\lineto(-0.13,0.09)
\lineto(-0.08,0.05)
\lineto(-0.04,0.03)
\lineto(-0.02,0.01)
\lineto(-0.01,0)
\lineto(0,0)
\lineto(0,0)
\lineto(0,0)
\lineto(0,0)
\lineto(0,0)
\lineto(0,0)
\lineto(0,0)
\lineto(0,0)
\lineto(0,0)
\lineto(0,0)
\lineto(0,0)
\lineto(0,0)
\lineto(0,0)
\lineto(0,0)
\lineto(0,0)
\lineto(0,0)
\lineto(0,0)
\lineto(0,0)
\lineto(0.01,-0.01)
\lineto(0.03,-0.02)
\lineto(0.06,-0.05)
\lineto(0.11,-0.08)
\lineto(0.16,-0.11)
\lineto(0.23,-0.16)
\lineto(0.3,-0.21)
\lineto(0.39,-0.26)
\lineto(0.49,-0.31)
\lineto(0.54,-0.33)
\lineto(0.59,-0.35)
\lineto(0.64,-0.38)
\lineto(0.7,-0.4)
\lineto(0.76,-0.42)
\lineto(0.82,-0.44)
\lineto(0.89,-0.45)
\lineto(0.95,-0.47)
\lineto(1.02,-0.48)
\lineto(1.09,-0.49)
\lineto(1.16,-0.5)
\lineto(1.22,-0.5)
\lineto(1.29,-0.5)
\lineto(1.36,-0.5)
\lineto(1.43,-0.5)
\lineto(1.5,-0.49)
\lineto(1.56,-0.48)
\lineto(1.62,-0.47)
\lineto(1.68,-0.46)
\lineto(1.74,-0.44)
\lineto(1.8,-0.42)
\lineto(1.85,-0.4)
\lineto(1.94,-0.35)
\lineto(2.02,-0.3)
\lineto(2.08,-0.25)
\lineto(2.13,-0.19)
\lineto(2.16,-0.14)
\lineto(2.18,-0.08)
\lineto(2.19,-0.03)
\lineto(2.19,0.02)
\lineto(2.18,0.07)
\lineto(2.17,0.12)
\lineto(2.14,0.18)
\lineto(2.09,0.23)
\lineto(2.04,0.29)
\lineto(1.96,0.34)
\lineto(1.88,0.39)
\lineto(1.83,0.41)
\lineto(1.78,0.43)
\lineto(1.72,0.45)
\lineto(1.66,0.46)
\lineto(1.6,0.48)
\lineto(1.54,0.49)
\lineto(1.47,0.49)
\lineto(1.4,0.5)
\lineto(1.34,0.5)
\lineto(1.27,0.5)
\lineto(1.2,0.5)
\lineto(1.13,0.49)
\lineto(1.06,0.49)
\lineto(0.99,0.48)
\lineto(0.93,0.46)
\lineto(0.86,0.45)
\lineto(0.8,0.43)
\lineto(0.74,0.41)
\lineto(0.68,0.39)
\lineto(0.62,0.37)
\lineto(0.57,0.35)
\lineto(0.52,0.32)
\lineto(0.42,0.27)
\lineto(0.33,0.22)
\lineto(0.25,0.17)
\lineto(0.18,0.13)
\lineto(0.12,0.09)
\lineto(0.08,0.05)
\lineto(0.04,0.03)
\lineto(0.02,0.01)
\lineto(0.01,0)
\lineto(0,0)
\lineto(0,0)
\lineto(0,0)
\lineto(0,0)
\lineto(0,0)
\lineto(0,0)
\lineto(0,0)
\lineto(0,0)
\lineto(0,0)
\lineto(-0.01,-0.01)
\lineto(-0.03,-0.02)
\lineto(-0.06,-0.05)
\lineto(-0.11,-0.08)
\lineto(-0.16,-0.11)
\lineto(-0.23,-0.16)
\lineto(-0.3,-0.21)
\lineto(-0.39,-0.26)
\lineto(-0.49,-0.31)
\lineto(-0.54,-0.33)
\lineto(-0.59,-0.35)
\lineto(-0.64,-0.38)
\lineto(-0.7,-0.4)
\lineto(-0.76,-0.42)
\lineto(-0.82,-0.44)
\lineto(-0.89,-0.45)
\lineto(-0.95,-0.47)
\lineto(-1.02,-0.48)
\lineto(-1.09,-0.49)
\lineto(-1.16,-0.5)
\lineto(-1.22,-0.5)
\lineto(-1.29,-0.5)
\lineto(-1.36,-0.5)
\lineto(-1.43,-0.5)
\lineto(-1.5,-0.49)
\lineto(-1.56,-0.48)
\lineto(-1.62,-0.47)
\lineto(-1.68,-0.46)
\lineto(-1.74,-0.44)
\lineto(-1.8,-0.42)
\lineto(-1.85,-0.4)
\lineto(-1.94,-0.35)
\lineto(-2.02,-0.3)
\lineto(-2.08,-0.25)
\lineto(-2.13,-0.19)
\lineto(-2.16,-0.14)
\lineto(-2.18,-0.08)
\lineto(-2.19,-0.03)
\lineto(-2.19,0.02)
\lineto(-2.18,0.07)
\lineto(-2.17,0.12)
\lineto(-2.14,0.18)
\lineto(-2.09,0.23)
\lineto(-2.04,0.29)
\lineto(-1.96,0.34)
\lineto(-1.88,0.39)
\lineto(-1.83,0.41)
\lineto(-1.78,0.43)
\lineto(-1.72,0.45)
\lineto(-1.66,0.46)
\lineto(-1.6,0.48)
\lineto(-1.54,0.49)
\lineto(-1.47,0.49)
\lineto(-1.4,0.5)
\lineto(-1.34,0.5)
\lineto(-1.27,0.5)
}
\begin{scriptsize}
\psdots[dotstyle=*,linecolor=blue](0,0)
\rput[bl](-0.04,-0.36){\blue{$O$}}
\end{scriptsize}
\end{pspicture*}
	\end{center}
	\caption{Inversion in an Ellipse of a Hyperbola.}
	\label{fig:hiperbolainversioncaso1}
\end{figure}

Note that the inversion in an ellipse is not  conformal.
\section{Pappus Elliptic Chain}
The classical inversion  has a lot of applications, such as the Pappus Chain Theorem, Feuerbach's Theorem, Steiner Porism, the problem of Apollonius, among others \cite{BLA, OGI, PED}. In this section, we generalize The Pappus Chain Theorem  with respect to ellipses.

\begin{theorem}
Let $E$ be a semiellipse with principal diameter $\overline{AB}$, and $E'$ and $E_0$  semiellipses on the same  side of $\overline{AB}$ with principal diameters $\overline{AC}$ and $\overline{CD}$  respectively, and   $E\sim E_0,  E_0 \sim E'$, see Figure \ref{cadenaeli}. Let $E_1, E_2, \dots$ be a sequence of ellipses tangent to $E$ and $E'$, such that $E_n$ is tangent to $E_{n-1}$ and $E_n \sim E_{n-1}$ for all $n\geq 1$. Let $r_n$ be the semi-minor axis of $E_n$ and $h_n$ the distance of the center of $E_n$ from $\overline{AB}$. Then $h_{n}=2nr_{n}$
\end{theorem}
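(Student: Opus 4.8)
The plan is to transplant the classical inversive proof of the Pappus Chain Theorem into the elliptic setting, using the elliptic inversion developed above in place of circle inversion. Since $E\sim E_0$ and $E_0\sim E'$, the curves $E,E',E_0$ are homothetic, and because $E_n\sim E_{n-1}$ for all $n\ge1$ the whole family $E,E',E_0,E_1,E_2,\dots$ consists of pairwise homothetic ellipses. Fix coordinates with $A$ at the origin and the line $AB$ along the $x$-axis, and take as ellipse of inversion the ellipse $\mathcal{E}_A$ centred at $A$ and homothetic to $E$; write $\psi$ for the associated elliptic inversion. Two preliminary remarks: by Theorem~\ref{coordenadaselipse} the map $\psi$ is given by rational formulas off $A$, so it is a diffeomorphism of $\mathbb{R}^2\setminus\{A\}$ and hence preserves tangency of curves at points other than $A$; and by Theorem~\ref{invrelipse}(i) the line $AB$, passing through $A$, is fixed by $\psi$.

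Now apply $\psi$ to the whole figure. By Theorem~\ref{elipsem}(ii) the semiellipses $E$ and $E'$, being homothets of $\mathcal{E}_A$ through $A$, go to lines $\ell$ and $\ell'$; these are parallel, since $\mathcal{E}_A$, $E$ and $E'$ (regarded as ellipses) are symmetric in line $AB$, hence $\psi$ is symmetric in $AB$, hence $\ell$ and $\ell'$ lie on lines symmetric in $AB$ and therefore perpendicular to it. By Theorem~\ref{elipsem}(i) each $E_n$ ($n\ge0$) goes to an ellipse $E_n'$ homothetic to $E$, and since $\psi$ preserves the tangencies, $\{E_n'\}_{n\ge0}$ is a family of homothets of a fixed ellipse, each tangent to both of the parallel lines $\ell,\ell'$, with $E_n'$ tangent to $E_{n-1}'$. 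A homothet of $E$ tangent to two fixed parallel lines is pinned down up to translation, so the $E_n'$ are all translates of one fixed ellipse; their centres lie on the mid-line $m$ of $\ell$ and $\ell'$, which is perpendicular to $AB$, and consecutive members, being congruent and tangent, have centres $2\rho$ apart along $m$, where $\rho$ is their common semi-minor axis. Finally, as in the classical arbelos the seed $E_0$ touches $E$ and $E'$ at points of line $AB$ (at shared endpoints of the respective major axes, where the common tangent is perpendicular to $AB$), so $E_0'$ touches $\ell$ and $\ell'$ at points of $AB$; hence the vertex of $E_0'$ on each of $\ell,\ell'$ lies on $AB$, which forces the centre of $E_0'$ onto $AB$. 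Therefore the centre of $E_n'$ is at distance $2n\rho$ from line $AB$ while its semi-minor axis is $\rho$, i.e. $h_n'=2n\,r_n'$ in the obvious notation for the images.

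The remaining step, which I expect to be the main obstacle, is to pull this back through $\psi$: one must show that the ratio $(\text{distance of the centre from line }AB)\,/\,(\text{semi-minor axis})$ is unchanged when a homothet of $E$ is inverted in $\mathcal{E}_A$. The efficient device is to conjugate by a linear map. Let $a_0,b_0$ be the semi-axes of $\mathcal{E}_A$ with $a_0$ along line $AB$, and put $T=\mathrm{diag}(1/a_0,1/b_0)$. A short computation with the formula of Theorem~\ref{coordenadaselipse} shows that $T\psi T^{-1}$ is ordinary inversion in the unit circle; moreover $T$ carries every homothet of $E$ to a genuine circle, fixes line $AB$, and multiplies both ``distance from $AB$'' and ``semi-minor axis'' by the same factor $1/b_0$ (a homothet of $E$ with perpendicular-to-$AB$ semi-axis $t$ is sent to a circle of radius $t/b_0$). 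So our ratio equals the ratio $(\text{distance from }AB)\,/\,(\text{radius})$ of the corresponding circles, and this is a classical invariant of inversion centred at a point of the line, because the image circle's centre is a scalar multiple of the original centre viewed as a vector from the centre of inversion, so the circle-and-line picture merely dilates. Applying this to $E_n$ and $E_n'$ gives $h_n/r_n=h_n'/r_n'=2n$, that is $h_n=2n\,r_n$. I would present the argument in this elliptic-inversion form; as a remark one may observe that $T$ in fact conjugates the entire elliptic Pappus configuration to a classical arbelos Pappus chain (with line $AB$ fixed), so the theorem is equivalent to the classical one rescaled by $b_0$. The geometric points needing care are exactly the ones flagged above: that homothets of $E$ tangent to two parallel lines are congruent with collinear, equispaced centres, that $E_0$ meets $E$ and $E'$ on line $AB$, and the invariance lemma — each of which becomes transparent after conjugation by $T$.
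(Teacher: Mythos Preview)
Your argument is correct, but it diverges from the paper's at one key point. The paper, following the classical trick, does not use a single fixed ellipse of inversion: for each index $n$ it chooses the ellipse of inversion centred at the common endpoint so that $E_n$ is \emph{orthogonal} to it (radius of inversion equal to the tangent length from the centre to $E_n$), hence $\psi(E_n)=E_n$ by Theorem~\ref{elipsem}(iii). After this inversion the two bounding semiellipses become parallel lines perpendicular to $\overline{AB}$ and the chain becomes a stack of congruent ellipses, exactly as you describe --- but now $E_n$ itself is unchanged, so $h_n=2nr_n$ can be read off directly from the stacked picture with no pullback step at all.

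You instead fix one inversion centred at $A$, obtain $h_n'=2n r_n'$ for the images, and then need your invariance lemma that the ratio (distance from $\overline{AB}$)/(semi-minor axis) survives elliptic inversion centred on $\overline{AB}$. Your proof of that lemma via the conjugation $T=\mathrm{diag}(1/a_0,1/b_0)$ is sound (and the computation $T\psi T^{-1}=$ unit-circle inversion is correct), so the route works. What you gain is a structural statement the paper does not make explicit: the entire elliptic Pappus chain is the image under $T^{-1}$ of a classical circular Pappus chain, so the theorem is literally the classical one transported by an affine map. What the paper's choice buys is economy --- by fixing $E_n$ it sidesteps the invariance lemma entirely, at the cost of a different inversion for each $n$.
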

\begin{proof}
Let $\psi_i$ the elliptic inversion  such that  $\psi(E_{i})=E_{i}$, (in Figure \ref{cadenaeli} we select  $i=2$), i.e., $\psi_i=\mathcal{E}(B,t_i)$, where $t_i$ is the length of the tangent segment to the Ellipse $E$ from the point $B$.

\begin{figure}[h]
    \begin{center}
        \includegraphics[scale=1]{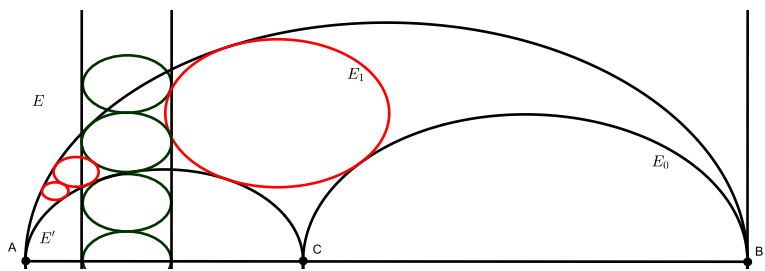}
    \end{center}
    \caption{Elliptic Pappus Chain.}
    \label{cadenaeli}
\end{figure}

By Theorem \ref{elipsem}, $\psi_i(E)$ and $\psi_i(E_0)$ are perpendicular lines to the line $\stackrel{\longleftrightarrow}{AB}$  and  tangentes to the ellipse $E_{i}$. Hence, ellipses $\psi_i(E_{1}), \psi_i(E_{2}), \dots$ will also invert to tangent ellipses  to parallel lines $\psi_i(E)$ and $\psi_i(E_0)$. Whence  $h_{i}=2ir_{i}$.
\end{proof}

\section{Concluding remarks}
The study of elliptic inversion  suggests interesting and challenging problems. For
example, generalized the Steiner Porism or Apollonius Problems with respect to ellipses.

\end{document}